\newtheorem{main}{Main Theorem}
\numberwithin{equation}{section}
\newcommand{\be}{\begin{equation}\begin{aligned}}
\newcommand{\ee}{\end{aligned}\end{equation}}
\newcommand{\ben}{\begin{equation}\nonumber\begin{aligned}}
\newcommand{\dist}{{\rm dist}}
\newcommand{\R}{\mathbb{R}}
\newcommand{\N}{\mathbb{N}}
\newcommand{\D}{\mathcal{D}}
\newcommand{\A}{\mathcal{A}}
\newcommand{\B}{\mathfrak{B}}
\newcommand{\TT}{\mathbb{T}}
\newcommand{\LL}{\mathbb{L}_{per}}
\newcommand{\HH}{\mathbb{H}_{per}}
\renewcommand{\d}{{\rm{d}}}
\numberwithin{equation}{section}
\newtheorem{theorem}{Theorem}[section]
 \declaretheorem[name=Lemma, sibling=theorem]{lemma}
    \declaretheorem[name=Assumption]{assumption}
\theoremstyle{definition}
\newtheorem{definition}[theorem]{Definition}
\theoremstyle{remark}
\newtheorem{remark}[theorem]{Remark}
\newcommand{\ceqref}[1]{{\color{blue}\eqref{#1}}}
\newcommand{\cref}{\autoref}
\begin{document}

 \title[$(H,H^2)$-smoothing effect of   Navier-Stokes equations]{$(H,H^2)$-smoothing effect  of   Navier-Stokes equations with additive white noise on two-dimensional torus }

\author[H. Cui, H. Liu \& J. Xin]{}

 \keywords{Smoothing effect; $(H,H^2)$-random attractor;  $H^2$ regularity; $H^2$ absorbing set.
 \newline
 \emph{\text{\ \ \quad E-mails. }} h.cui@outlook.com (H. Cui);
 ss\_liuhui@ujn.edu.cn (H. Liu); fdxinjie@sina.com (J. Xin)}
\subjclass[2000]{35B40, 35B41, 60H15}

\maketitle

\centerline{\scshape Hongyong Cui$^{1}$, Hui Liu$^2$, Jie Xin$^3$}
\medskip
{\footnotesize
 \centerline{$^1$School of Mathematics and Statistics \& Hubei Key Laboratory of Engineering Modeling and Scientific  Computing,}
 \centerline{Huazhong University of Science and Technology, Wuhan 430074, PR China}}

\medskip
{\footnotesize
 \centerline{$^2$ School of Mathematical Sciences, University of Jinan, Jinan 250022, PR China}

  \medskip
{\footnotesize
 \centerline{$^3$ {School of Information Engineering, Shandong Youth University of Political Science, Jinan 250103, PR China}}

\begin{abstract}

This paper is devoted to the regularity of Navier-Stokes (NS) equations  with  additive white noise  on two-dimensional torus $\mathbb T^2$.  Under the conditions that the external force $f(x)$ belongs to the phase space $ H$ and the noise intensity function $h(x)$  satisfies $\|\nabla h\|_{L^\infty} \leq \sqrt \pi  \nu \lambda_1$,
 where  $ \nu $ is the kinematic viscosity of the fluid and $\lambda_1$ is  the first eigenvalue of the Stokes operator,  it was proved that the random NS equations possess a tempered  $(H,H^2)$-random attractor  whose (box-counting) fractal  dimension in $H^2$ is finite.
This was achieved by establishing, first, an $H^2$ bounded   absorbing set and, second,  an $(H,H^2)$-smoothing effect of the system which lifts the compactness and finite-dimensionality  of the attractor in $H$ to that in $H^2$.   Since the force $f$ belongs only to $H$, the $H^2$-regularity of solutions as well as the $H^2$-bounded absorbing set was constructed by an indirect approach of estimating the $H^2$-distance     between the solution  of the random NS equations and that of the corresponding deterministic equations.
\end{abstract}

\tableofcontents

\section{Introduction}

In this  paper we study the asymptotic dynamics of the following stochastic two-dimensional Navier-Stokes (NS) equations on  $\mathbb{T}^2=[0,L]^2$, $L>0$:
 \begin{align} \label{1}
 \left \{
  \begin{aligned}
 &  \d u_h+ \left (-  \nu \Delta u_h+(u_h\cdot\nabla)u_h+\nabla p\right) \d t=  f(x) \, \d t+h(x)\, \d W,\\
  & \nabla\cdot u_h=0,\\
  & u_h(0)=u_{h,0},
  \end{aligned} \right.
\end{align}
 endowed with periodic boundary conditions, where $u_h$ is the velocity field, $ \nu >0$ is the kinematic viscosity, $p$ is the scalar pressure and $f \in H $  represents volume forces that are applied to the fluid, where $H= \left \{u\in \LL ^2:
  \int_{\mathbb T^2}u \, \d x=0,\,  {\rm div}\, u=0 \right \}$ is the phase space of the system.  $W(t)$ is a two-sided real valued Wiener process on a probability space $(\Omega,\mathcal{F},\mathbb{P})$ with $\Omega=\{\omega\in C(\mathbb{R},\mathbb{R}):\omega(0)=0\}$, $\mathcal{F}$  the Borel $\sigma$-algebra induced by the compact open topology of $\Omega$, and $\mathbb{P}$  the corresponding Wiener measure on $(\Omega,\mathcal{F})$. The subscript $``_{h}"$ indicates the dependence on the noise intensity $h(x)$.
When $h\equiv0$, we obtain the deterministic  NS equations on  $\mathbb{T}^2$:
 \begin{align}\label{2}
 \left \{
  \begin{aligned}&
   \frac{\d u}{\d t}-  \nu \Delta u+(u\cdot\nabla)u+\nabla p=f(x),\\
  & \nabla\cdot u=0,\\
   &u(0)=u_{0}.
  \end{aligned} \right.
\end{align}

The NS equations are fundamental mathematical models in fluid mechanics \cite{temam}. In order to  understand the asymptotic behavior of their dynamics,  many papers have been devoted to the global attractor theory of NS equations \cite{robinson01,temam1}. For the 2D case as we are concerned in this paper,   the global attractor of autonomous NS equations has been studied in \cite{Brzezniak,constantin,ju,liuvx,liuvx1,rosa,sun,wang,yang,zhao}, while \cite{cui24siads,cui24ma,hou,langa,lu,lukaszewicz} and
\cite{Brzezniak13,Brzezniak2015,Brzezniak2018,crauel97jdde,crauel,dong,li,liu2018,mohammed,wang12,wang,wang2023}  studied the global attractors for  non-autonomous and for   stochastic NS equations, respectively, under various settings.   In particular, the  regularity of the global attractor for  \ceqref{2} was studied in detail in \cite{robinson01}, and it is known that if $f\in H^s$, where    $H^s $, $s>0$, are standard functional spaces in the theory of NS equations (see   \cref{sec3} for the setting),  then the global attractor $\A_0$ of  deterministic NS equations is bounded in $H^{s+1}$,
see, e.g., \cite{robinson13}.   Particularly for $f\in H$,
 the  following  $H^2$-boundedness of the global attractor  $\A_0$  of   \ceqref{2}  is well-known, see also \cite{julia} for a non-autonomous version of pullback attractors.
\begin{lemma}(\cite[Proposition 12.4]{robinson01})  \label{intro-rob}
For $f\in H$ the deterministic NS equations  \ceqref{2}  generates a semigroup $S$ which possesses a global attractor $\A_0$ in $H$ and has an
  absorbing set which is bounded in $H^2$. In particular,    the global attractor $\A_0$   is   bounded in $H^2$.
\end{lemma}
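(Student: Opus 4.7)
The plan is to combine classical well-posedness with a three-level energy hierarchy in $H$, $V$ and $D(A)$, and then invoke the standard attractor existence theorem via the compact embedding $H^2\hookrightarrow H$. For $f\in H$ and $u_0\in H$, well-posedness of 2D NS on $\mathbb{T}^2$ is classical and supplies a continuous semigroup $S(t)$ on $H$ with $u\in C([0,\infty);H)\cap L^2_{\mathrm{loc}}(0,\infty;V)$. Testing the equation with $u$, using $b(u,u,u)=0$ and the Poincar\'e inequality, one obtains
\begin{equation*}
\frac{d}{dt}\|u\|^2 + \nu\lambda_1\|u\|^2\leq \frac{\|f\|^2}{\nu\lambda_1},
\end{equation*}
which by Gronwall yields an $H$-bounded absorbing set and a uniform control of $\int_t^{t+1}\|\nabla u(s)\|^2\,ds$. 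Testing next with $Au=-P\Delta u$, the torus-specific identity $b(u,u,Au)=0$ valid for divergence-free periodic fields kills the nonlinearity and, via the uniform Gronwall lemma, produces a $V$-bounded absorbing set.

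The main obstacle is the upgrade to $H^2$: because $f$ lies only in $H$, testing against $A^2u$ is ruled out since the resulting $(f,A^2u)$ term is not controllable. I would therefore take the indirect route of differentiating the equation in time. Setting $v=u_t$ and using that $f$ is time-independent,
\begin{equation*}
v_t+\nu Av+B(v,u)+B(u,v)=0.
\end{equation*}
Testing with $v$, invoking $b(u,v,v)=0$ and the 2D Ladyzhenskaya inequality $\|v\|_{L^4}^2\leq c\|v\|\,\|\nabla v\|$, gives
\begin{equation*}
\frac{d}{dt}\|v\|^2+\nu\|\nabla v\|^2\leq \frac{c^2}{\nu}\|\nabla u\|^2\|v\|^2,
\end{equation*}
so the uniform Gronwall lemma, fed by the $L^2$-in-time control of $\|\nabla u\|$ from the first step (with a Foias--Temam time-weight $t\|v\|^2$ to absorb the fact that $u_t(0)$ need not lie in $H$), produces an eventual uniform bound on $\|u_t\|_H$. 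Reading the NS system as the stationary Stokes problem $\nu Au=f-u_t-B(u,u)$ in $H$, estimating $\|B(u,u)\|_H\leq c\|u\|^{1/2}\|\nabla u\|\,\|Au\|^{1/2}$ via 2D product inequalities, and absorbing the $\|Au\|^{1/2}$ factor by Young's inequality, then yields an eventual uniform bound on $\|Au\|_H$, hence an absorbing set bounded in $H^2$.

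With the $H^2$-absorbing set in hand, the compact embedding $D(A)\hookrightarrow H$ furnishes asymptotic compactness of $S(t)$ on $H$, so the classical attractor existence theorem produces the global attractor $\mathcal{A}_0\subset H$. Since $\mathcal{A}_0$ must sit inside every absorbing set, it is automatically bounded in $H^2$, which is the stated conclusion. The single delicate point in the whole scheme is the intermediate bound on $u_t$ near $t=0$; everywhere else the argument reduces to standard energy inequalities exploiting the torus identity $b(u,u,Au)=0$ and 2D Sobolev interpolation.
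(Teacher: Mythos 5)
Your proof is correct and follows essentially the same route as the paper's source for this lemma (\cite[Proposition 12.4]{robinson01}): the paper itself only cites Robinson and, in Remark \ref{rmk1}, explicitly describes the method as estimating the time-derivative $\partial_t u$ rather than the solution itself, which is precisely your strategy of bounding $u_t$ (with the time-weight to handle $u_t(0)\notin H$) and then reading the $H^2$ bound off the stationary Stokes problem $\nu Au=f-u_t-B(u,u)$. No gaps; this matches the intended argument.
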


 This means the global attractor $\A_0$ of  NS equations  \ceqref{2} is a global $(H,V)$-attractor, where $V=H^1$, that is,  it is a  compact set in $V$
 and  attracts  every bounded set in $H$ under the norm of $V$, indicating that the global attractor $\A_0$ consists of strong solutions of the equation, see \cite[Theorem 9.5]{robinson01}, and every weak solution of the equation is attracted by these strong solutions in $V$.   Further, the $H^2$-boundedness of the attractor $\A_0$ indicates  that the strong solutions in $\A_0$ are in fact smooth solutions  (\cite{Ladyzenskaja,robinson01,temam}), but   the attraction of the attractor happens only in $V$ since so far the attractor  has not been proved to be $(H,H^2)$.

    In fact,  the  $(H,V)$-attractor of 2D NS equations can exist in more general  settings.     For instance,  employing the technique of asymptotic compactness instead of the compactness embeddings,   \cite{ju}  established the $(H,V)$-global attractor for  some unbounded domains.  Adapted  techniques of asymptotic compactness have been applied to non-autonomous and random cases. For instance,  \cite{cui21jde}  studied the construction and finite fractal dimensionality of $(H,V)$-uniform attractors and    \cite{zhao19} established $(H,V)$-trajectory attractors  for non-autonomous NS equations, while \cite{li} studied  the $(H,V)$-random uniform attractor for non-autonomous and random NS equations.  These techniques for obtaining $(H,V)$-type attractors have been successfully applied to other fluid mechanics  models as well, see, e.g.,  most recently  \cite{wang2024,wang2023}.

It is possible to prove that  the global attractor $\A_0$ of deterministic NS equations  \ceqref{2} is in fact an $(H,H^2)$-attractor
 by  a  technique  of, e.g.,  deriving the $H^1$ boundedness of the time-derivatives $u_t$ of solutions as  Song \& Hou \cite{song}  did for 3D damped NS equations.  Nevertheless,  in  this paper we shall work on the more challenging  stochastic case  \ceqref{1} instead of a deterministic case, proving that under certain conditions  the random dynamical system generated by  \ceqref{1} has  an $(H,H^2)$-random attractor, and, in addition, this random attractor  has finite (box-counting) fractal dimension in $H^2$.  Since the  idea of estimating $u_t$  does not apply to stochastic equations, even the $H^2$-random absorbing sets need be constructed differently here from \cite{song}.

       \subsection*{Main results. Main techniques}

  The main results of the paper   are now  introduced.
To begin with, following Crauel  \& Flandoli \cite{crauel}  we reformulated the stochastic NS   equation  \ceqref{1}   via a scalar Ornstein-Uhlenbeck process as the following random equation
\begin{align}  \label{1.3}
\frac{\d v}{\d t}+\nu Av+B\big( v(t)+hz(\theta_t\omega)\big)=f(x)-\nu Ahz(\theta_t\omega)+hz(\theta_t\omega),
\end{align}
where $A =-P\Delta$, $B(u)=P((u\cdot \nabla)u)$ with $P:\LL^2\to H$ the Helmholtz-Leray orthogonal projection, and   $z$ is a tempered random variable.  This random equation is known as a conjugate system to  \ceqref{1} and generates a random dynamical system (RDS) $\phi$ in $H$, see  \cref{sec2.1} for more detail.
The intensity  function of the noise $h(x)$ is supposed to satisfy  the following assumption.

\begin{assumption} \label{assum}
Let    $ h \in H^3$, and
\[
  \|\nabla h\|_{L^\infty} <  {\sqrt \pi} \nu \lambda_1 ,
\]
where  $\lambda_1 = 4\pi^2 /L^2$ is the first eigenvalue of the Stokes operator $A$.
  \end{assumption}

  In   \cref{theorem4.1} we  constructed an $H^2$-random absorbing set as the following.

\begin{main}[$H^2$ random absorbing set]
Let $f\in H$ and  \cref{assum} hold. Then the RDS $\phi$  has a  random absorbing set $\mathfrak{B}_{H^2}$ given as a random $H^2$ neighborhood of the global attractor $\mathcal{A}_0$ of the deterministic equation  \ceqref{2}:
\begin{align}
\mathfrak{B}_{H^2}(\omega)=\left \{v\in H^2: \, {\rm dist}_{H^2}(v,\mathcal{A}_{0}) \leq \sqrt{\rho(\omega)}  \, \right\}, \quad \omega\in\Omega, \nonumber
\end{align}
where $\rho(\cdot)$ is a tempered random variable.
 As a consequence, the random attractor $\mathcal{A}$ of $\phi $ is a bounded and tempered random set in $H^2$.
\end{main}
This result  generalizes \cref{intro-rob} to random cases, while  a major technical difficulty arises  here   that   the standard techniques used for the deterministic NS equations in obtaining the $H^2$ regularity   do not apply to the random equation  \ceqref{1.3}.  More precisely,   \cref{intro-rob} was achieved by  estimating the  time-derivative $u_t$ of solutions, see \cite{robinson01}. This, however, cannot be done for  random equation \ceqref{1.3} since the Winner process is not derivable in time.  On the other hand,  a more  direct method of estimating the $H^2$-norm of $v$  by multiplying both sides of  equation  \ceqref{1.3} by $A^2 v$ is not applicable either since $f$ belongs only to $H$.

In order to overcome this difficulty we employed a comparison approach, adapted from an idea of  Sun \cite{sun}  used in a study of the regularity of  deterministic wave equations. We
 estimated  the  difference $w:=v-u$  between the solutions of the random and the deterministic   NS equations instead of estimating the solution $v$ itself, where $u$ is taken as a solution corresponding to an initial value lying  in the global attractor $\A_0$.
 By proving  that $w$ is asymptotically bounded in $H^2$ (see   \cref{lemma4.4})  it follows the desired $H^2$ random absorbing set of $v$ since  $ \cup_{t\geq 0} u(t) \subset \A_0$ has been  bounded  in $ H^2$ (see   \cref{intro-rob}).
   \medskip

After the $H^2$ regularity ($H^2$ boundedness)  of the random attractor $\A$ has been established  we were interested in its compactness in $H^2$. In   \cref{theorem5.1}  we proved that   the random attractor $\A$
  is not only compact but also finite-dimensional in $H^2$, and the attraction  happens in $H^2$ as well.  More precisely,

\begin{main}[$(H,H^2)$-random attractor] Let $f\in H$ and   \cref{assum} hold. Then the random attractor $\mathcal{A}$ of the random NS equation  is an    $(H,H^2)$-random attractor  of finite fractal dimension in $H^2$.
\end{main}

This theorem follows from standard bi-spatial random attractor theory \cite{cui18jdde}, where a key    step in proving an attractor to be  $(H,H^2)$  is to prove the  $(H,H^2)$-asymptotic compactness of the system.   Nevertheless,  instead of the required $(H,H^2)$-asymptotic compactness of the system, we here proved a more technical   $(H,H^2)$-smoothing  property. This smoothing property is essentially a local $(H,H^2)$-Lipschitz continuity in initial values, and  has been shown powerful in estimating the fractal dimension of attractors and constructing exponential attractors, see for instance Cholewa et al$.$ \cite{cholewa08}, most recently  Carvalho et al$.$ \cite{carvalho25jns} and references therein.  In addition, once the  finite-dimensionality of the attractor in $H$ has been known (as in our case), from  the $(H, H^2)$-smoothing property  it would follow immediately the
finite-dimensionality in $H^2$.
Therefore,  in   \cref{theorem5.6} we derived an $(H,H^2)$-smoothing property as stated below.

\begin{main}[$(H,H^2)$-smoothing effect] Let $f\in H$ and  \cref{assum} hold.
 Then for   any tempered set $\mathfrak D $ in $H$ there
  exist  random variables $T_{{\mathfrak D}} (\cdot)  $   and $ L_{\mathfrak D}(\cdot )$ such that  any two solutions $v_1$ and $v_2$ of random NS equations  \ceqref{1.3} corresponding to initial values   $v_{1,0},$ $ v_{2,0}$ in $\mathfrak D \left (\theta_{-T_{\mathfrak D}(\omega)}\omega \right)$, respectively, satisfy
  \ben
  &
  \left\|v_1 \!  \left (T_{\mathfrak D}(\omega),\theta_{-T_{\mathfrak D}(\omega)}\omega,v_{1,0}\right)
  -v_2 \! \left (T_{\mathfrak D}(\omega),\theta_{-T_{\mathfrak D}(\omega)}\omega,v_{2,0} \right) \right \|^2_{H^2} \\[0.8ex]
&\quad \leq  L_{\mathfrak D} (\omega)\|v_{1,0}-v_{2,0}\|^2_H ,\quad \omega\in \Omega.
\ee
\end{main}

Note that the proof of this theorem  is  technical and the periodic boundary condition greatly facilitated the analysis.  We believe that the main idea of this paper applies to smooth bounded domains or some unbounded domains  under   proper boundary conditions, while more technical calculations and analysis would be expected.
Since multiplicative noises would lead to a special  structure of  the noise being coupled, our idea of the comparison approach seems not applicable for multiplicative noises. Hence, the $H^2$ random absorbing sets as well as the $H^2$ random attractor of   NS equations with   multiplicative noises  remain open.

\subsection*{Organization of the paper}

 In   \cref{sec2} we recall the required $(X,Y)$-random attractor theory and their fractal dimensions.  In   \cref{sec3}, we first  introduce the  functional spaces that are necessary to study NS equations, and then associate an RDS to the random equation.   Though the  random attractor in  phase space $H$ has been well-known, it is  carefully derived in this section since we are under  new   conditions and,  in addition,  the estimates obtained here will be crucial  afterwards in constructing  $H^2$ random absorbing sets and the $(H,H^2)$-smoothing effect of  the  RDS.  In  \cref{sec4}, by an indirect approach of  estimating the $H^2$-distance between the random and the deterministic solution trajectories  we construct an  $H^2$ random absorbing set. In   \cref{sec5}, we  prove the crucial  $(H,H^2)$-smoothing  of the RDS from which it follows in   \cref{sec6}  that the random attractor    in $H$ is in fact an $(H,H^2)$-random attractor of finite  fractal dimension in $H^2$.

\section{Preliminaries:   $(X,Y)$-random attractors and  their fractal dimension}\label{sec2}

In this section we introduce the basic  theory of $(X,Y)$-random attractors and their fractal dimension needed in this paper.   The readers are referred to \cite{arnold,crauel,cui18jdde,langa2} and references therein.

Let $(X,\|\cdot\|_X)$ be a Banach space  and  $(\Omega,\mathcal{F},\mathbb{P})$ be  a probability space with a group $\{\theta_t\}_{t\in\mathbb{R}}$ of measure-preserving self-transformations on $\Omega$.
Denote by $\mathcal{B}(\cdot)$ the Borel $\sigma$-algebra of a metric space, and let
   $\R^+:=[0,\infty)$.

\begin{definition}
A mapping $\phi:\mathbb{R}^+\times\Omega\times X\rightarrow X$ is called a {\em random dynamical system (RDS)}   in $X$, if
\begin{enumerate}[(i)]
\item $\phi$ is $(\mathcal{B}(\mathbb{R}^+)\times\mathcal{F}\times\mathcal{B}(X),\mathcal{B}(X))$-measurable;

\item  for every $\omega\in\Omega$, $\phi(0,\omega,\cdot)$ is the identity on $X$;

\item $\phi$ satisfies  the following cocycle property
\begin{align*}
\phi(t+s,\omega,x)=\phi(t,\theta_s\omega,\phi(s,\omega,x)), \quad \forall t,s\in \mathbb{R}^+,~\omega\in\Omega,~x\in X;
\end{align*}
\item the mapping $x\mapsto \phi(t,\omega, x)$ is continuous.
\end{enumerate}
\end{definition}

A  set-valued mapping $\mathfrak D$: $\Omega\mapsto 2^X\setminus \emptyset $, $ \omega\mapsto \mathfrak D(\omega) $, is called a \emph{random set}  in $X$ if it is   \emph{measurable} in the sense that the mapping $\omega\to \dist_X(x, \mathfrak D(\omega))$    is  $(\mathcal F,\mathcal B(\R))$-measurable for each $x\in X$. If each its image $\mathfrak D(\omega)$ is closed (or  bounded, compact, etc.) in $X$, then $\mathfrak D$ is called a  closed (or  bounded, compact, etc.) random set in $X$.

A random variable $\zeta:\Omega\mapsto X$ is called {\em tempered} if
\[
  \lim_{t\to \infty} e^{-\varepsilon t} |\zeta(\theta_{-t}\omega)| =0, \quad \varepsilon >0.
\]
Note that if $\zeta $ is a  tempered random variable, then so is $|\zeta|^k$ for all $k\in \mathbb N$.
A random set $\mathfrak D$ in $X$  is called {\emph{tempered}}  if there exists a   tempered random variable $\zeta$ such that $\|\mathfrak D(\omega)\|_X:=\sup_{x\in \mathfrak D(\omega)} \|x\| \leq \zeta(\omega)$ for all $\omega\in \Omega$.

\smallskip
Denote by $\mathcal{D}_X$  the collection of all the tempered random sets in $X$.

 \begin{definition}
A    random set $\mathfrak{B}$ in $X$ is called a {\emph{(random)  absorbing set}} of an RDS $\phi$ if  it pullback absorbs every tempered random set in $X$, i.e., if for any  $\mathfrak D\in\mathcal{D}_X$   there exists a random variable $T_{\mathfrak D} (\cdot)$ such that
\begin{align*}
\phi(t,\theta_{-t}\omega, \mathfrak D(\theta_{-t}\omega))\subset \mathfrak{B}(\omega),\quad t\geq T_{\mathfrak D}(\omega),\  \omega\in\Omega.
\end{align*}
\end{definition}

\begin{definition}
A random set $\mathcal{A}$  is called  the \emph{random attractor}   of an RDS  $\phi$ in $X$ if
\begin{enumerate}[(i)]
\item  $\mathcal{A}$ is a tempered and compact random set in $X$;

 \item  $\mathcal{A}$ is invariant under $\phi$, i.e.,
\begin{align*}
\mathcal{A}(\theta_t\omega)=\phi(t,\omega,\mathcal{A}(\omega)), \quad t\geq0,~\omega\in\Omega;
\end{align*}

 \item $\mathcal{A}$  pullback attracts every tempered set in $X$, i.e.,  for any $\mathfrak D\in  \mathcal{D}_X$,
 \[
\lim\limits_{t\rightarrow\infty} {\rm dist}_{X} \big(\phi(t,\theta_{-t}\omega,\mathfrak D(\theta_{-t}\omega)), \, \mathcal{A}(\omega) \big)=0,\quad \omega\in\Omega ,
 \]
where
$\dist _{X}(\cdot, \cdot) $  denotes  the Hausdorff semi-metric between   subsets  of  $ X$, i.e.,
\[
 \dist_{X}(A,B)= \sup_{a\in A} \inf_{b\in B} \|a-b\|_X, \quad \forall A,B\subset X.
\]
\end{enumerate}
\end{definition}

Let $Y$ be a Banach space with continuous embedding $Y \hookrightarrow X$.
\begin{definition}
A random set $\mathcal{A}$   is called  the {\emph{$(X,Y)$-random attractor}}  of  an RDS $\phi$ in $X$,  if
\begin{enumerate}[(i)]
\item  $\mathcal{A}$ is a tempered and compact random set in $Y$;

 \item  $\mathcal{A}$ is invariant under $\phi$, i.e.,
\begin{align*}
\mathcal{A}(\theta_t\omega)=\phi(t,\omega,\mathcal{A}(\omega)), \quad t\geq0,~\omega\in\Omega;
\end{align*}

 \item $\mathcal{A}$  pullback attracts every tempered set in $X$ in the metric of $Y$, i.e.,  for any $\mathfrak D\in  \mathcal{D}_X$,
 \[
\lim\limits_{t\rightarrow\infty} {\rm dist}_{Y} \big(\phi(t,\theta_{-t}\omega, \mathfrak D(\theta_{-t}\omega)), \, \mathcal{A}(\omega) \big)=0,\quad \omega\in\Omega .
 \]
\end{enumerate}
\end{definition}

Note that  in addition to the temperedness, compactness and pullback attraction in $Y$,  the measurability in $Y$ of an $(X,Y)$-random attractor was also  required by definition.

  The following criterion  is an adaption of \cite[Theorem 19]{cui18jdde}.
  \begin{lemma}(\cite[Theorem 19]{cui18jdde}) \label{lem:cui18}
  Let $\phi$ be an RDS.  Suppose that
  \begin{enumerate}[(i)]
  \item  $\phi$ has a   random absorbing set $\mathfrak B$ which is a tempered and closed random set in $Y$;
  \item $\phi$  is $(X,Y)$-asymptotically compact, that is, for any $ \mathfrak D\in \D_X$,  $x_n \in \mathfrak D(\theta_{-t_n} \omega)$
  and $t_n\to \infty$,   the sequence
  $\{\phi(t_n, \theta_{-t_n} \omega, x_n)\}_{n\in \N}$  has a convergent subsequence in $Y$.
  \end{enumerate}
    Then  $\phi$ has a unique   $(X,Y)$-random attractor $\A$ given by
    \[
    \A(\omega) = \bigcap_{s\geq 0} \overline{ \bigcup_{t\geq  s} \phi(t,\theta_{-t} \omega, \mathfrak  B(\theta_{-t}\omega)) }^Y ,
    \quad \omega \in \Omega.
    \]
  \end{lemma}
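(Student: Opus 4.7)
The plan is to follow the standard omega-limit set construction, adapted to the bi-spatial setting, and verify in turn the four defining properties of an $(X,Y)$-random attractor. I would take $\A$ as defined in the statement,
\[
\A(\omega) := \bigcap_{s\geq 0} \overline{ \bigcup_{t\geq s} \phi(t,\theta_{-t}\omega, \mathfrak B(\theta_{-t}\omega)) }^Y, \quad \omega\in\Omega,
\]
which is equivalently characterized as the set of all $Y$-limits of sequences $\phi(t_n,\theta_{-t_n}\omega, x_n)$ with $t_n\to\infty$ and $x_n\in\mathfrak B(\theta_{-t_n}\omega)$. Since $\mathfrak B$ is tempered in $Y$ and $Y\hookrightarrow X$ continuously, $\mathfrak B\in\D_X$, so the $(X,Y)$-asymptotic compactness immediately yields that $\A(\omega)$ is nonempty; a standard diagonal extraction applied to any sequence in $\A(\omega)$, combined with the asymptotic compactness again, gives that $\A(\omega)$ is $Y$-compact.

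For the pullback attraction in the $Y$-metric I would argue by contradiction: if some $\mathfrak D\in\D_X$ were not attracted, there would exist $\varepsilon>0$, $t_n\to\infty$ and $x_n\in\mathfrak D(\theta_{-t_n}\omega)$ with $\dist_Y\bigl(\phi(t_n,\theta_{-t_n}\omega,x_n),\A(\omega)\bigr)\geq \varepsilon$. Using that $\mathfrak B$ absorbs $\mathfrak D$ after time $T=T_{\mathfrak D}(\omega)$, the cocycle property rewrites $\phi(t_n,\theta_{-t_n}\omega,x_n)= \phi(t_n-T,\theta_{-(t_n-T)}\omega,y_n)$ with $y_n\in\mathfrak B(\theta_{-(t_n-T)}\omega)$, and the $(X,Y)$-asymptotic compactness, now applied to $\mathfrak B$ itself, delivers a $Y$-convergent subsequence whose limit must lie in $\A(\omega)$, a contradiction. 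Invariance is then a two-sided argument: the inclusion $\phi(t,\omega,\A(\omega))\subset\A(\theta_t\omega)$ follows directly from the cocycle identity and the continuity of $x\mapsto\phi(t,\omega,x)$; the reverse inclusion decomposes any sequence realizing $z\in\A(\theta_t\omega)$ as $\phi(t,\omega,\cdot)$ applied to an interior sequence, extracts a $Y$-limit $y\in\A(\omega)$, and transfers the limit through $\phi(t,\omega,\cdot)$ by continuity. Temperedness of $\A$ in $Y$ follows because, by the absorbing property, $\A(\omega)\subset\mathfrak B(\omega)$ for every $\omega$ and $\mathfrak B$ is tempered in $Y$ by hypothesis.

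The main obstacle I expect is the measurability of $\A$ as a closed random set in $Y$, which is subtler than in the one-space setting because the closure is taken inside a strictly smaller space than the phase space $X$, and the only probabilistic input available is the measurability of $\mathfrak B$ in $Y$. The plan is to exploit that each map $(\omega,x)\mapsto\phi(t,\theta_{-t}\omega,x)$ is jointly measurable and continuous in $x$, so that for each $s\in\mathbb Q^+$ the set-valued map $\omega\mapsto \overline{\bigcup_{t\in\mathbb Q,\,t\geq s}\phi(t,\theta_{-t}\omega,\mathfrak B(\theta_{-t}\omega))}^Y$ is a closed random set in $Y$ via density of rationals together with standard projection/selection theorems; the countable intersection over $s\in\mathbb Q^+$ is then $\A$ and is measurable in $Y$. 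Finally, uniqueness follows at once from the pullback $Y$-attraction and invariance: if $\A'$ is any other $(X,Y)$-random attractor, then $\A'$ is tempered in $Y$ hence in $X$, so applying the attraction of each candidate to the other and using invariance yields $\dist_Y(\A'(\omega),\A(\omega))=0=\dist_Y(\A(\omega),\A'(\omega))$ for every $\omega$, whence $\A=\A'$ by closedness in $Y$.
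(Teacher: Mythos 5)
First, note that the paper offers no proof of this lemma: it is imported verbatim from \cite[Theorem 19]{cui18jdde}, so there is no internal argument to compare yours against. Judged on its own terms, the dynamical part of your construction is the standard one and is essentially correct: the pullback omega-limit set of $\mathfrak B$ in $Y$ is nonempty and compact by the diagonal argument, attracts every $\mathfrak D\in\D_X$ by the contradiction argument routed through absorption into $\mathfrak B$, is invariant (though for the inclusion $\phi(t,\omega,\A(\omega))\subset\A(\theta_t\omega)$ you should note that continuity of $\phi(t,\omega,\cdot)$ only gives convergence in $X$, and you must invoke the $(X,Y)$-asymptotic compactness once more to identify that $X$-limit with a $Y$-limit lying in $\A(\theta_t\omega)$), is contained in $\mathfrak B(\omega)$ by absorption plus $Y$-closedness of $\mathfrak B$, hence tempered in $Y$, and is unique by the usual mutual-attraction argument.

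The genuine gap is exactly where you anticipated it: measurability of $\A$ as a random set in $Y$, and the route you propose does not work. An RDS is only assumed continuous as a map $X\to X$ and jointly measurable into $X$; nothing gives continuity or measurability of $x\mapsto\phi(t,\omega,x)$ or of $t\mapsto\phi(t,\theta_{-t}\omega,x)$ as maps into $Y$. Consequently (a) the $Y$-closure of the union over rational $t\geq s$ need not coincide with the $Y$-closure of the union over all $t\geq s$, since that density argument requires $Y$-continuity in $t$; and (b) even for a single fixed $t$, computing $\dist_Y\bigl(y,\phi(t,\theta_{-t}\omega,\mathfrak B(\theta_{-t}\omega))\bigr)$ as an infimum over countable measurable selections of $\mathfrak B$ requires at least lower semicontinuity of $x\mapsto\|y-\phi(t,\omega,x)\|_Y$ along $X$-convergent sequences, which is not among the hypotheses. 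This is precisely the obstruction that \cite{cui18jdde} is written to overcome: there one first establishes measurability of the attractor in $X$ (where the standard selection arguments do apply), observes that the $(X,Y)$-attractor coincides as a set with the $X$-attractor, and then transfers measurability from $X$ to $Y$ using compactness of the fibers in $Y$ together with a quasi strong-to-weak continuity property and separability. Without some such additional input your measurability step cannot be completed, so the proposal is incomplete at the one point where the cited theorem has real content.
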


 \medskip

 Suppose that $E$ is a precompact  set in $X$, and let $N(E, \varepsilon)$ denote the minimum number of balls of radius $\varepsilon$ in $X$ required to cover $E$. Then the {\emph{(box-counting) fractal dimension of $E$}}  is defined as
\[
 d_f^X(E)=\limsup_{\varepsilon\to 0} \frac{ \log N(E,\varepsilon)}{ -\log \varepsilon} ,
\]
where the superscript ``$\ ^X$''   indicates the space to which  the fractal dimension is referred.

The following lemma is a random adaption of a well-known property of fractal dimension under Hölder continuous projections, see, e.g., Robinson \cite{robinson11}.
\begin{lemma}(\cite[Lemma 5]{cui}) \label{lem:cui}
Let  $\phi$ be an RDS in $X$ with a random attractor $\A$ of finite fractal dimension. If $\phi$ has a tempered  random absorbing set $\B $ which is bounded in $Y$ and there are   random variables $T_\omega$ and $L(\omega)$ such that  the following $(X,Y)$-smoothing property on $\B$
 \ben
 \|\phi(T_\omega,\theta_{-T_\omega} \omega, x_1)-\phi(T_\omega,\theta_{-T_\omega}\omega, x_2)\|_Y \leq L(\omega) \|x_1-x_2\|_X^\delta,\quad x_1,x_2\in \B(\theta_{-T_\omega} \omega),
 \ee
 holds for some $\delta>0$ for all $\omega\in \Omega$,
 then  $\A$ has finite fractal dimension in $Y$:
 \[
 d_f^Y \big(\A(\omega) \big)\leq \frac1\delta \,  d_f^X\big(\A(\theta_{-T_\omega}\omega)\big) ,\quad \omega\in \Omega.
 \]
 \end{lemma}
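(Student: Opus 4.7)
The plan is to transfer the classical Hölder-map fractal dimension bound (cf.\ Robinson \cite{robinson11}) to the random setting by combining the invariance of $\A$ with the $(X,Y)$-smoothing estimate on $\B$. Throughout, write $\omega':=\theta_{-T_\omega}\omega$, so that the invariance of the attractor gives $\A(\omega)=\phi(T_\omega,\omega',\A(\omega'))$.

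The first preparatory step is to verify the inclusion $\A(\omega)\subset \B(\omega)$ for every $\omega\in\Omega$. This requires only that $\A$ is itself a tempered random set: the absorbing property supplies $\phi(t,\theta_{-t}\omega,\A(\theta_{-t}\omega))\subset \B(\omega)$ for all sufficiently large $t$, and by invariance the left-hand side equals $\A(\omega)$. This inclusion guarantees that centres chosen inside $\A(\omega')$ lie in $\B(\omega')$, which is the precise domain on which the smoothing hypothesis is posed.

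The core step is a covering argument. Since $\A(\omega')$ is compact in $X$, for any $\varepsilon>0$ one can cover it with $N:=N(\A(\omega'),\varepsilon)$ closed $X$-balls of radius $\varepsilon$ centred at points $a_1,\ldots,a_N\in\A(\omega')\subset\B(\omega')$. For each $x\in \A(\omega')\cap B_X(a_i,\varepsilon)$, the smoothing assumption yields
\[
\|\phi(T_\omega,\omega',x)-\phi(T_\omega,\omega',a_i)\|_Y\leq L(\omega)\,\varepsilon^\delta,
\]
so by invariance the images $\phi(T_\omega,\omega',a_i)\in\A(\omega)$ provide an $L(\omega)\varepsilon^\delta$-cover of $\A(\omega)$ in $Y$, giving the key counting inequality
\[
N\bigl(\A(\omega),\,L(\omega)\varepsilon^\delta,\,Y\bigr)\;\leq\;N\bigl(\A(\omega'),\,\varepsilon,\,X\bigr).
\]

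The final step is arithmetic: substitute $\varepsilon':=L(\omega)\varepsilon^\delta$, take logarithms, and pass to the limsup as $\varepsilon'\to 0^+$. Because $L(\omega)$ does not depend on $\varepsilon$, the ratio $(-\log\varepsilon)/(-\log\varepsilon')$ tends to $1/\delta$ and the constant $\log L(\omega)$ washes out, producing exactly $d_f^Y(\A(\omega))\leq \tfrac{1}{\delta}\,d_f^X(\A(\omega'))$. The one subtle point, and the step I would be most careful about, is ensuring the cover centres are chosen \emph{inside} $\A(\omega')$ itself (not merely in some auxiliary superset) so that membership in $\B(\omega')$ is automatic; once that is observed, the argument is a routine random adaptation of the deterministic Hölder-cover estimate.
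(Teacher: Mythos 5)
Your proof is correct and is essentially the standard argument: the inclusion $\A(\theta_{-T_\omega}\omega)\subset\B(\theta_{-T_\omega}\omega)$ obtained from temperedness of $\A$ plus absorption and invariance, the covering transfer $N\big(\A(\omega),L(\omega)\varepsilon^\delta\big)\leq N\big(\A(\theta_{-T_\omega}\omega),\varepsilon\big)$ via $\A(\omega)=\phi(T_\omega,\theta_{-T_\omega}\omega,\A(\theta_{-T_\omega}\omega))$, and the logarithmic bookkeeping are exactly what is needed. The paper itself offers no proof of this lemma (it is quoted from \cite[Lemma 5]{cui}), so there is nothing to compare against beyond noting that your argument is the one underlying that reference; the only cosmetic caveat is that if the optimal $\varepsilon$-cover of $\A(\theta_{-T_\omega}\omega)$ has centres outside the attractor, one passes to centres inside it at the cost of doubling the radius, which is harmless after taking logarithms.
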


Notet that in the case of $\{\theta_t\}_{t\in \R}$ being  ergodic  and  the RDS $\phi$ being Lipschitz in initial values, the fractal dimension  of the random attractor is $\mathbb P$-a.s$.$ constant, see Langa \& Robinson \cite{langa2}.

\section{The Navier-Stokes equations and the   attractors in $H$}\label{sec3}

\subsection{Functional spaces}

 In order to study the periodic boundary conditions
  we  first
 introduce Sobolev spaces of periodic functions.  Such functions can be represented as Fourier series, which makes their analysis significantly more straightforward than for the spaces on bounded domains.  The following settings are standard, the readers are referred to, e.g., \cite{robinson01,temam1,temam}. etc.

  Denote by $(C_{per}^\infty(\mathbb T^2))^2 $  the space of two-component infinitely differentiable functions that are $L$-periodic in each direction, and by  $\LL^2$, $\HH^1$ and $\HH^2$ the completion of $(C_{per}^\infty(\mathbb T^2))^2 $  with respect to the $(L^2(\TT^2))^2$, $(H^1(\TT^2))^2$ and $(H^2(\TT^2))^2$ norm, respectively.

 The phase space $(H,\|\cdot\|)$ is a subspace of  $\LL^2$ of functions with zero mean and free divergence, i.e.,
\begin{align*}
H= \left \{u\in \LL ^2:
\, \int_{\mathbb T^2}u \ \d x=0,\  {\rm div}\, u=0 \right \},
\end{align*}
endowed with the norm of $\LL^2$, i.e.,
  $\|\cdot\|=\|\cdot\|_{\LL^2}$.   For $p>2$,  we write $(L^p(\TT^2))^2$  simply as $L^p$ and its norm as $\|\cdot\|_{L^p}$.

Note that any $u\in \LL^2 $ has an expression $u=\sum_{j\in \mathbb{Z}^2} \hat u_j e^{{\rm i}j\cdot x}$, where  $ {\rm i}= \sqrt{-1}$ and $\hat u_j$ are Fourier coefficients.  Thus, $\int_{\mathbb T^2}    u \,  \d x=0$ is equivalent to $\hat u_0=0$. Therefore, any  $u\in H $ is in form
\[
 u =\sum_{j\in \mathbb Z^2\setminus\{0\}} \hat u_j e^{{\rm i}j\cdot x} .
\]
 Let $P: \LL ^2\rightarrow H$ be the Helmholtz-Leray orthogonal projection operator. In  this periodic space, we define the stokes operator  $Au=-P\Delta u=-\Delta u$ for all $u\in D(A) $. In  this bounded domain, the projector $P$ does not commute with derivatives. Then the operator $A^{-1}$  is a self-adjoint positive-definite compact operator from $H$  to $H$.

   For $s >0 $,    the Sobolev space  $H^s:=D(A^{s/2})$  is defined in a standard way by
 \[
H^s =\left \{u\in H:\|u\|^2_{H^s}=\sum\limits_{j\in \mathbb{Z}^{2}\backslash \{0\}}|j|^{2s}|\hat{u}_j|^2<\infty \right \},
 \]
endowed with the norm $\|\cdot\|_{H^s}=\|A^{s/2}\cdot \|$.

For $u,v\in H^1$,  we define the bilinear form
\[
B(u,v)=P((u\cdot\nabla)v) ,
\]
and, in particular, $B(u):=B(u,u)$.

 The following Poincar\'e's inequality is standard, see, e.g., \cite[lemma 5.40]{robinson01}.

\begin{lemma}[Poincar\'e's inequality]  If $u\in H^1$, then
\begin{equation} \label{poin}
 \|u\| \leq \left( \frac L{2\pi} \right) \|    u\|_{H^1}.
\end{equation}
\end{lemma}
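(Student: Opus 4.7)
The plan is to exploit the Fourier series representation on the two-dimensional torus that has already been set up in this section. Any $u\in H$ admits an expansion $u=\sum_{j\neq 0} \hat u_j e^{\mathrm{i} j\cdot x}$ with the zero-frequency mode absent thanks to the zero-mean condition inherited from the definition of $H$. In this basis the Laplacian is diagonal, so the inequality will reduce to a termwise comparison of coefficients.

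First I would invoke Parseval's identity to write $\|u\|^2=\sum_{j\neq 0}|\hat u_j|^2$ and, using the definition $\|u\|_{H^1}^2=\sum_{j\neq 0}|j|^{2}|\hat u_j|^2$ recorded earlier in this section, observe that in the $L$-periodic setting the admissible wavenumbers lie in $\tfrac{2\pi}{L}\mathbb Z^2$, so that $|j|^2\geq \lambda_1=4\pi^2/L^2$ for every $j\neq 0$. Then the estimate
\[
\|u\|_{H^1}^2 \;=\; \sum_{j\neq 0}|j|^2|\hat u_j|^2 \;\geq\; \lambda_1\sum_{j\neq 0}|\hat u_j|^2 \;=\; \lambda_1\|u\|^2
\]
rearranges into the advertised bound $\|u\|\leq (L/2\pi)\|u\|_{H^1}$.

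There is essentially no obstacle in the argument; the only point that deserves care in the write-up is the normalisation of the Fourier indices, since the paper's notation writes the series over $j\in\mathbb Z^2$ while the correct wavenumbers in the $L$-periodic setting live in $\tfrac{2\pi}{L}\mathbb Z^2$. One should make explicit that with either convention the smallest nonzero $|j|^2$ equals $\lambda_1=4\pi^2/L^2$, consistent with the first eigenvalue of the Stokes operator used in \cref{assum}. As an equivalent (and perhaps cleaner) alternative, I could argue abstractly via the spectral decomposition of $A$: writing $u=\sum_k u_k e_k$ with $Ae_k=\lambda_k e_k$ and $\lambda_k\geq \lambda_1$, one has $\|u\|_{H^1}^2=\langle Au,u\rangle=\sum_k \lambda_k|u_k|^2\geq \lambda_1\|u\|^2$, which bypasses any ambiguity in the Fourier normalisation.
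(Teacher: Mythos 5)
Your argument is correct; note that the paper itself gives no proof of this lemma, citing \cite[Lemma 5.40]{robinson01}, and your Fourier-series (equivalently, spectral) argument is exactly the standard one behind that citation. Your caution about the wavenumber normalisation is well placed --- with the paper's literal convention $u=\sum_{j\in\mathbb Z^2\setminus\{0\}}\hat u_j e^{\mathrm i j\cdot x}$ the minimal nonzero $|j|^2$ would be $1$ rather than $4\pi^2/L^2$, so the clean way to state it is via the spectral decomposition of $A$, whose first eigenvalue is $\lambda_1=4\pi^2/L^2$, giving $\|u\|_{H^1}^2=(Au,u)\geq\lambda_1\|u\|^2$ as you do.
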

Thus, with
\[
 \lambda_1 :=   \frac{4\pi^2 } {L^2}  ,
\]
we have $\| u\|_{H^1}^2 \geq\lambda_1 \|u\|^2$. For later purpose we  denote by $\lambda:= \alpha \nu \lambda_1/4$.

We let  the noise  intensity $h$  satisfy   \cref{assum}, i.e.,     $ h \in H^3$ and
\begin{align}\label{2.3}
  \|\nabla h\|_{L^\infty} <  {\sqrt \pi} \nu \lambda_1 .
\end{align}
 (In particular, $\|\nabla h\|_{L^\infty} < \sqrt{\pi} \nu $  for the scale $L=2\pi$).
Clearly, the  tolerance of the noise  intensity     increases as the kinematic viscosity $\nu$ increases.

\subsection{Global attractor of the deterministic Navier-Stokes equation}

We   now  recall the global attractor of the deterministic NS equations  \ceqref{2}, i.e. for $h(x)\equiv 0$,  which will be crucial for our analysis later. Under projection $P$, the      equation   is rewritten as
 \begin{align}\label{2.1}
   \partial_tu+ \nu Au+B(u)  =f,\quad u(0)=u_{0}.
\end{align}
This equation   generates an autonomous dynamical system $S$ in phase space $H$ with a global attractor $\A_0$. In addition,    the following regularity result of the attractor $\A_0$ seems  optimal for $f\in H$  in the literature.

\begin{lemma}\label{lem:det} (\cite[Proposition 12.4]{robinson01})
For $f\in H$ the deterministic NS equations  \ceqref{2.1}  generates a semigroup $S$ which possesses a global attractor $\A_0$ in $H$ and has an
  absorbing set which is bounded in $H^2$. In particular,    the global attractor $\A_0$   is   bounded in $H^2$.
\end{lemma}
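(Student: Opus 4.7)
My plan is to build a nested chain of bounded absorbing sets $\B_H \supset \B_V \supset \B_{H^2}$ by bootstrapping regularity via two applications of the uniform Gronwall lemma, and then to view \ceqref{2.1} pointwise as the elliptic identity $\nu A u = f - \partial_t u - B(u)$ in order to convert an $H$-bound on $\partial_t u$ into an $H^2$-bound on $u$. Once such a chain is in place, the existence of $\A_0$ in $H$ is classical via the compact embedding $V\hookrightarrow H$, and the desired $H^2$-boundedness of $\A_0$ follows from $\A_0\subset\B_{H^2}$.

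First, I would test \ceqref{2.1} with $u$ in $H$; the orthogonality $\langle B(u),u\rangle=0$ together with \ceqref{poin} gives the standard energy inequality $\tfrac{d}{dt}\|u\|^2+\nu\lambda_1\|u\|^2\leq \|f\|^2/(\nu\lambda_1)$, which yields an $H$-absorbing ball $\B_H$ and, upon integration, the time-averaged control $\int_t^{t+1}\|u(s)\|_V^2\,ds\leq M_0$ on absorbing trajectories. Next I would test with $Au$ and invoke the 2D Ladyzhenskaya-type bound $|\langle B(u),Au\rangle|\leq C\|u\|^{1/2}\|u\|_V\|Au\|^{3/2}$; Young's inequality then produces a differential inequality of the form $\tfrac{d}{dt}\|u\|_V^2+\tfrac{\nu}{2}\|Au\|^2\leq C\|u\|^2\|u\|_V^4+C$. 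Combined with the averaged bound just obtained, the uniform Gronwall lemma delivers a $V$-absorbing ball $\B_V$ and, upon one further integration, a time-averaged bound on $\|Au\|^2$.

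The key and most delicate step — the one for which the hypothesis $f\in H$ (rather than $f\in V$) makes the argument nontrivial — is the lift from $V$ to $H^2$. Following the strategy recalled in the introduction, I would formally differentiate \ceqref{2.1} in time to obtain $\partial_t u_t+\nu A u_t+B(u_t,u)+B(u,u_t)=0$. Testing with $u_t$ in $H$, the divergence-free condition kills $\langle B(u,u_t),u_t\rangle$, and the 2D Ladyzhenskaya inequality controls the surviving nonlinearity by $|\langle B(u_t,u),u_t\rangle|\leq C\|u_t\|\|u_t\|_V\|u\|_V$. Absorbing $\|u_t\|_V$ into the viscous term yields $\tfrac{d}{dt}\|u_t\|^2+\nu\|u_t\|_V^2\leq C\|u\|_V^2\|u_t\|^2$. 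A local-in-time bound on $\int_t^{t+1}\|u_t\|^2\,ds$, needed to run uniform Gronwall, is obtained by testing \ceqref{2.1} itself with $u_t$ and exploiting the time-averaged $\|Au\|^2$-control already in hand; a second application of uniform Gronwall then gives a uniform pointwise bound $\|u_t\|\leq M_2$ along $\B_V$-trajectories.

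Finally, reading \ceqref{2.1} as $\nu Au=f-u_t-B(u)$, the 2D interpolation $\|B(u)\|\leq C\|u\|^{1/2}\|u\|_V\|Au\|^{1/2}$ together with Young's inequality absorbs the $\|Au\|^{1/2}$ factor on the left and yields $\|Au\|\leq C(\|f\|+\|u_t\|+\|u\|\|u\|_V^2)$. Since $\|f\|$, $\|u_t\|$, $\|u\|$, and $\|u\|_V$ are all uniformly bounded on absorbing trajectories, this produces the $H^2$-absorbing ball $\B_{H^2}$. The principal technical obstacle I anticipate is the rigorous justification of the time-differentiation step for solutions starting merely in $H$: I would handle it via Galerkin approximation, passing all estimates to the limit, and exploiting the instantaneous smoothing $H\to V$ of the semigroup, which makes $u_t$ a well-defined $H$-valued function for positive times.
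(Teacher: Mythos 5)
Your proposal is correct and follows essentially the same route as the paper's source: the paper does not prove this lemma itself but cites \cite[Proposition 12.4]{robinson01}, and in Remark 3.2 it explicitly describes that proof as proceeding by estimating the time-derivative $\partial_t u$ (precisely because $f\in H$ forbids testing with $A^2u$), which is exactly your bootstrap $H\to V\to$ ($u_t$ in $H$) $\to H^2$ via the elliptic identity $\nu Au=f-u_t-B(u)$. The estimates you outline (uniform Gronwall twice, Ladyzhenskaya/Agmon bounds on $B$, Galerkin justification of the time-differentiation) are the standard and correct implementation of that strategy.
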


We will prove that the global attractor $\mathcal A$ is not only bounded in $H^2$, but also has   finite fractal dimension (and thus compact) in $H^2$.

\begin{remark}  \label{rmk1}
 As $f$ belongs only to $H$, one cannot obtain an energy equation of  $\|Au\|$ directly and thus  \cref{lem:det} was proved by estimating the time-derivatives $ \partial_t u $  of solutions    rather than estimating the solutions  $u$ themselves.       However, this   method  does not apply to the  random NS  equation  \ceqref{2.2}  since the Wiener process is not derivable in time.  Hence, in  \cref{sec4} we will employ a comparison approach  to obtain $H^2$ random absorbing sets for  \ceqref{2.2}.
\end{remark}

\subsection{Generation of an RDS}\label{sec2.1}

We now transform the stochastic Navier-Stokes equation  \ceqref{1}   to a random NS equation  which    generates an RDS $\phi$ on $H$.
          Following Crauel, Defussche \& Flandoli \cite{crauel97jdde},    let
 \[
z(\theta_t\omega)=-\int_{-\infty}^0e^\tau(\theta_t\omega)(\tau)\, \d \tau,\quad \forall t\in \mathbb{R},\ \omega\in\Omega.
\]
Then $z(\theta_t\omega)$ is   the one-dimensional Ornstein-Uhlenbeck process which solves the equation
 \[
\d z(\theta_t\omega)+z(\theta_t\omega) \, \d t= \d \omega.
 \]
Moreover, there exists a $\theta_t $-invariant  subset $\tilde \Omega\subset \Omega$ of full measure such that $z(\theta_t\omega)$ is continuous in $t$  for every $\omega \in \tilde\Omega$ and the random variable $|z(\cdot )|$ is tempered, namely, for each $\varepsilon>0$ it holds
\[
   \lim_{t \to \infty} e^{-\varepsilon t} | z(\theta_{-t}\omega )| =0,\quad \forall  \omega\in \Omega .
\]
In fact, it satisfies
\[
\lim_{t\to \pm\infty}\frac{|z(\theta_t\omega)|}{|t|}=0,\quad \lim_{t\to \pm\infty}\frac 1t \int^t_0 z(\theta_s\omega)\, \d s =0.
\]
Moreover, by the ergodicity of the  parametric system, see, e.g., \cite{arnold} or \cite{wang2024}, we have
\begin{align} \label{erg}
\lim\limits_{t\rightarrow \pm \infty}\frac{1}{t}\int_0^t|z(\theta_s\omega)|^m\, \d s=\mathbb{E}  \big (|z(\theta_t\omega)|^m
\big) =\frac{\Gamma  (\frac{1+m}{2} )}{\sqrt{\pi}}
,\quad \omega\in \tilde \Omega,
\end{align}
for  all $m\geq 1$, where $\Gamma$ is the Gamma function.  Hereafter, we will not distinguish $\tilde \Omega$ and $\Omega$.

 We introduce the following transformation
 \[
u_h(t)=v(t)+hz(\theta_t\omega),\quad t>0,\ \omega\in \Omega.
 \]
Under projection $P$, the  system   \ceqref{1} of $u_h$  is then transformed to   the   abstract evolution equations
\begin{align}\label{2.2}
\left\{
\begin{aligned}
&
\frac{\d v}{\d t}+ \nu Av+B\big( v(t)+hz(\theta_t\omega)\big)=f(x)-  \nu Ahz(\theta_t\omega)+hz(\theta_t\omega), \\
& v(0)=u_h(0)-hz(\omega),
\end{aligned}
\right.
\end{align}
 of $v$ in $H$.

By virtue of the standard Galerkin method, we have the   well-posedness  of weak solutions of  \ceqref{2.2} as stated below.
\begin{lemma}
Let $f\in H$ and  \cref{assum} hold. Then for each $v(0)\in H$ and $\omega\in\Omega$, there exists a unique weak solution
\begin{align*}
v (t) \in C_{loc}([0,\infty);H)\cap L^2_{loc} (0,\infty;H^1 )
\end{align*}
satisfying  \ceqref{2.2} in distribution sense with $v|_{t=0}=v(0)$.  In addition, this solution is continuous in initial values in $H$.
\end{lemma}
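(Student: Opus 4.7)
The plan is to run the classical Faedo-Galerkin scheme pathwise. For each fixed $\omega\in\Omega$ the driving term $z(\theta_t\omega)$ is continuous in $t$, so \ceqref{2.2} may be treated as a deterministic 2D Navier-Stokes-type equation with continuous, time-dependent lower order coefficients. The framework is therefore standard; the only adaptation is that the a priori estimate must be driven by \cref{assum} in order to secure the dissipation balance.

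First I would fix $\omega$, take the eigenbasis $\{e_j\}$ of the Stokes operator $A$, set $H_n=\mathrm{span}\{e_1,\ldots,e_n\}$ with orthogonal projection $P_n$, and consider Galerkin approximations $v_n\in H_n$ solving
\[
\frac{\d v_n}{\d t}+\nu A v_n+P_n B\big(v_n+h z(\theta_t\omega)\big)=P_n f-\nu z(\theta_t\omega) A h+z(\theta_t\omega) h,\qquad v_n(0)=P_n v(0),
\]
whose local existence follows from Cauchy-Lipschitz. Testing with $v_n$ in $H$ and using $(B(u,v),v)=0$ on divergence-free fields, the nonlinearity reduces to $(B(v_n,h z),v_n)+(B(h z,h z),v_n)$, and the first term is controlled by $\|\nabla h\|_{L^\infty}|z(\theta_t\omega)|\,\|v_n\|^2$. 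Combined with Poincar\'e's inequality $\|v_n\|_{H^1}^2\geq\lambda_1\|v_n\|^2$ and Young's inequality applied to the forcing and the Ornstein-Uhlenbeck terms, this yields
\[
\frac{\d}{\d t}\|v_n\|^2+\nu\|v_n\|_{H^1}^2\leq\bigl(2\|\nabla h\|_{L^\infty}|z(\theta_t\omega)|-\nu\lambda_1\bigr)\|v_n\|^2+C(\omega,t),
\]
with $C(\omega,\cdot)$ locally integrable. By \cref{assum} together with the ergodic identity \ceqref{erg} (which gives $\mathbb E|z|=1/\sqrt{\pi}$), the coefficient in front of $\|v_n\|^2$ has strictly negative time-average, so Gr\"onwall's lemma produces uniform bounds of $v_n$ in $L^\infty_{\mathrm{loc}}(0,\infty;H)\cap L^2_{\mathrm{loc}}(0,\infty;H^1)$.

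With these bounds I would then bound $\partial_t v_n$ in $L^2_{\mathrm{loc}}(0,\infty;H^{-1})$ via the equation and apply the Aubin-Lions compactness lemma to extract a subsequence converging strongly in $L^2_{\mathrm{loc}}(0,\infty;H)$ and weakly in $L^2_{\mathrm{loc}}(0,\infty;H^1)$, which suffices to identify the nonlinear term as $B(v+h z)$ in the distributional formulation. Continuity $v\in C_{\mathrm{loc}}([0,\infty);H)$ then follows from the Lions-Magenes embedding $L^2(0,T;H^1)\cap H^1(0,T;H^{-1})\hookrightarrow C([0,T];H)$. For uniqueness and continuity in initial data, I would subtract two solutions $v_1,v_2$ corresponding to the same $\omega$, set $w=v_1-v_2$, and derive
\[
\tfrac{1}{2}\tfrac{\d}{\d t}\|w\|^2+\nu\|w\|_{H^1}^2=-(B(w,v_2+h z),w)
\]
after cancellation of $(B(v_1+h z,w),w)=0$, estimating the right-hand side through Ladyzhenskaya's 2D inequality $\|w\|_{L^4}^2\leq C\|w\|\,\|w\|_{H^1}$, absorbing $\nu\|w\|_{H^1}^2$, and applying Gr\"onwall to obtain $\|v_1(t)-v_2(t)\|^2\leq e^{K(\omega,t)}\|v_{1,0}-v_{2,0}\|^2$.

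The only real subtlety is ensuring that the Gr\"onwall exponent $K(\omega,t)$ is pathwise finite for \emph{every} $\omega$ (not merely almost surely) on any bounded time interval. This is guaranteed by restricting to the $\theta_t$-invariant full-measure set identified in \cref{sec2.1} on which $z(\theta_\cdot\omega)$ is continuous, so that $\int_0^T|z(\theta_s\omega)|^m\,\d s<\infty$ for every $T>0$ and $m\geq 1$; this integrability is the sole pathwise ingredient needed to close both the a priori and the uniqueness estimates.
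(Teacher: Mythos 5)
Your proposal is correct and follows exactly the route the paper intends: the lemma is stated there without proof, attributed to ``the standard Galerkin method,'' and your pathwise Faedo--Galerkin scheme with the Aubin--Lions compactness step, the Lions--Magenes embedding for continuity into $H$, and the Ladyzhenskaya-based uniqueness/continuous-dependence estimate is precisely that standard argument. One minor remark: the appeal to \cref{assum} and the ergodic identity is not needed at this stage---on any bounded time interval the local integrability of $|z(\theta_\cdot\omega)|$ already closes the Gr\"onwall estimates, and the sign condition on the coefficient only becomes relevant later when constructing the absorbing set.
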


Therefore, by
\[
 \phi(t,\omega, v_0) =v(t,\omega,v_0),\quad  \forall t\geq 0,\, \omega\in \Omega, \, v_0\in H,
\]
where $v$ is the solution of  \ceqref{2.2},
we associated an RDS $\phi$ in $H$ to  \ceqref{2.2}.

\subsection{The random attractor in $H$}
We now construct the random attractor $\A$ in $H$ for the RDS $\phi$.  Though the construction here is essentially standard,   the detail will be given   since we are under a new   condition  \ceqref{2.3} in  \cref{assum} and,  in addition,  the bounds obtained here will be crucial later in constructing  $H^2$ random absorbing sets and the local $(H,H^2)$-Lipschitz continuity of  $\phi$.

 We first make some uniform estimates on solutions  of  \ceqref{2.2} corresponding to initial values $v(0)$ in $ H$,    with    careful use of the condition  \ceqref{2.3} in  \cref{assum}.   Throughout  this paper, we let $C$  denote a positive constant whose value may vary  from line to line.

  We will frequently make use of the following two    fundamental lemmas,  see, e.g., \cite{temam1,temam}.

\begin{lemma}[{Gronwall's lemma}]
Let $x(t)$ be  a function from $\R$ to $\R^+$ such that
\[
\dot x +a(t)x\leq b(t) .
\]
 Then for all $t\geq s $,
\[
 x(t)\leq e^{ -\int^t_s a(\tau) \, \d \tau} x(s)+\int_s^t e^{-\int^t_\eta a(\tau) \, \d \tau } b(\eta) \, \d \eta.
\]
\end{lemma}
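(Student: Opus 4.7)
The plan is to prove this Gronwall inequality by the classical integrating factor method. Introduce the positive function $\mu(t) = \exp\!\left(\int_s^t a(\tau)\, \d \tau\right)$, which satisfies $\mu(s)=1$ and $\dot\mu(t) = a(t) \mu(t)$; its purpose is to convert the left-hand side of the hypothesis into an exact derivative.

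First, I would multiply the differential inequality $\dot x(t) + a(t) x(t) \leq b(t)$ through by $\mu(t) > 0$, which preserves the inequality and gives
\[
\mu(t)\dot x(t) + a(t)\mu(t) x(t) \leq \mu(t) b(t).
\]
By the product rule combined with $\dot\mu = a\mu$, the left-hand side is exactly $\frac{\d}{\d t}\bigl(\mu(t) x(t)\bigr)$, so the inequality collapses to $\frac{\d}{\d t}\bigl(\mu(t) x(t)\bigr) \leq \mu(t) b(t)$.

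Next, I would integrate this over $[s,t]$ (valid since $t\geq s$), obtaining
\[
\mu(t) x(t) \leq \mu(s) x(s) + \int_s^t \mu(\eta) b(\eta)\, \d\eta = x(s) + \int_s^t \mu(\eta) b(\eta)\, \d\eta.
\]
Finally, dividing by $\mu(t) > 0$ and using the identities $\mu(\eta)/\mu(t) = \exp\!\left(-\int_\eta^t a(\tau)\, \d\tau\right)$ and $1/\mu(t) = \exp\!\left(-\int_s^t a(\tau)\, \d\tau\right)$ yields the stated bound.

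No genuine obstacle is anticipated: the argument is a short, closed-form calculation whose only technical prerequisites are absolute continuity of $x$ (so that $\dot x$ exists almost everywhere and the fundamental theorem of calculus applies) together with local integrability of $a$ and $b$. These are the tacit background hypotheses under which the estimate is invoked throughout the rest of the paper, so there is nothing delicate to negotiate.
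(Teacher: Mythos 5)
Your integrating-factor argument is correct and is the standard proof of this inequality; the paper itself states the lemma without proof, citing standard references, and your calculation is exactly the canonical derivation one would find there. The brief remark about absolute continuity of $x$ and local integrability of $a$ and $b$ appropriately covers the only tacit hypotheses.
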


\begin{lemma}[Gagliardo-Nirenberg inequality]  If $u\in L^q(\mathbb{T}^2)$, $D^{m}u\in L^r(\mathbb{T}^2)$, $1\leq q,r\leq\infty$, then there exists a constant $C$ such that
\[
 \|D^ju\|_{L^p} \leq  C \|D^mu\|_{L^r}^a\|u\|^{1-a}_{L^q},
\]
where
\begin{equation*}
\frac{1}{p}-\frac{j}{2}=a \left (\frac{1}{r}-\frac{m}{2} \right)+ \frac{1-a}{q}, \quad 1\leq p\leq\infty,~0\leq j\leq m,~\frac{j}{m}\leq a\leq1,
\end{equation*}  and
$C$ depends only on $\{m, \, j, \, a, \, q, \,r \}$.
\end{lemma}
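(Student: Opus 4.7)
The Gagliardo–Nirenberg inequality is a classical result whose standard proof proceeds by interpolation between endpoint Sobolev estimates; my plan is to adapt that approach to the torus $\mathbb{T}^2$ via the Fourier series representation available in this periodic setting, which sidesteps the decay issues that appear on $\mathbb{R}^2$. The scaling identity $\frac{1}{p}-\frac{j}{2} = a\bigl(\frac{1}{r}-\frac{m}{2}\bigr) + \frac{1-a}{q}$ is nothing other than the homogeneity condition required for a dilation-invariant estimate, and it dictates how the derivatives get distributed between the two right-hand norms.

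First, I would fix a Littlewood–Paley partition of unity on $\mathbb{T}^2$ and decompose $u = \sum_{\ell \geq -1} \Delta_\ell u$, where $\Delta_\ell u$ localizes the Fourier modes in the dyadic shell $|k|\sim 2^\ell$ (the zero mode being absorbed into the low-frequency block $\Delta_{-1}$). Bernstein's inequalities on the torus then yield the two endpoint bounds
\[
 \|D^j \Delta_\ell u\|_{L^p} \lesssim 2^{\ell(j + 2(1/r-1/p))}\|\Delta_\ell u\|_{L^r}, \qquad \|\Delta_\ell u\|_{L^r}\lesssim 2^{2\ell(1/q-1/r)}\|\Delta_\ell u\|_{L^q},
\]
the first being sharpest at high frequencies and the second at low frequencies. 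Splitting $\sum_\ell \|D^j \Delta_\ell u\|_{L^p}$ at a threshold $L$, applying the high-frequency bound for $\ell \geq L$ (after extracting a factor $2^{\ell(j-m)}$ so that $\|D^m u\|_{L^r}$ appears on the right) and the low-frequency bound for $\ell < L$, then summing two geometric series and optimizing over $L$, one obtains a product $\|D^m u\|_{L^r}^a \|u\|_{L^q}^{1-a}$ in which the exponent $a$ is forced by the scaling identity.

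The principal obstacle is the exponent bookkeeping: one must verify that the powers of $2^\ell$ appearing in the two frequency ranges combine consistently with the given homogeneity relation, and that the optimal choice of the threshold $L$ recovers exactly the stated value of $a$. The constraint $j/m \leq a \leq 1$ turns out to correspond precisely to the region where both geometric series converge (above and below the critical frequency), with the boundary case $a=j/m$ requiring a separate Sobolev-embedding argument and $a=1$ being a direct embedding of $W^{m,r}$ into $W^{j,p}$. Once this algebraic verification is done, the final constant depends only on $m,j,a,q,r$, since all Bernstein constants and Littlewood–Paley block norms depend only on those parameters and not on $u$ itself. As an alternative to this Fourier-analytic route, one could instead follow Nirenberg's original proof, which uses the Sobolev embedding $W^{1,1}(\mathbb{T}^2)\hookrightarrow L^{2}(\mathbb{T}^2)$ as an elementary base case and then iterates via Hölder's inequality and integration by parts; the exponent algebra is identical and the choice between the two approaches is essentially a matter of taste.
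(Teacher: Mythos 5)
The paper does not prove this lemma at all: it is quoted as a classical fact with a pointer to Temam's monographs, so there is no in-paper argument to compare yours against. That said, your Littlewood--Paley/Bernstein strategy is a standard and workable way to establish Gagliardo--Nirenberg on the torus, and the exponent bookkeeping you defer does close: writing $\sigma=j+2(1/q-1/p)$ and $\tau=j-m+2(1/r-1/p)$ for the low- and high-frequency exponents, the scaling identity gives $\sigma=a(\sigma-\tau)$ with $\sigma-\tau=m+2(1/q-1/r)$, and choosing the threshold so that $2^{L(\sigma-\tau)}\sim\|D^mu\|_{L^r}/\|u\|_{L^q}$ produces exactly $\|D^mu\|_{L^r}^{a}\|u\|_{L^q}^{1-a}$.

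There is, however, one genuine gap that your own decomposition makes visible. On $\mathbb{T}^2$ the homogeneous inequality as stated is false for general $u$ when $j=0$ and $a>0$: a nonzero constant has $D^mu=0$, so the right-hand side vanishes while $\|u\|_{L^p}$ does not. In your scheme this failure occurs precisely when the optimal threshold $L$ falls below the lowest block: the block $\Delta_{-1}u$ carrying the mean cannot be absorbed into the high-frequency estimate (reverse Bernstein degenerates at frequency zero), and the low-frequency bound only yields $\|u\|_{L^q}$ to the first power, not the power $1-a$. You also implicitly need $\sigma>0$, i.e.\ $m+2(1/q-1/r)>0$, for the low-frequency geometric series to converge. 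Both issues are repaired by adding the hypothesis $\int_{\mathbb{T}^2}u\,\d x=0$ (or an additive term $C\|u\|_{L^q}$ on the right), and this costs nothing downstream: every application in the paper is to elements of $H$ or $H^s$, which are mean-free by construction. You should state that restriction explicitly rather than absorbing the zero mode silently into $\Delta_{-1}$.
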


\begin{lemma}[$H$ bound]\label{lemma4.1}
Let  $f\in H$ and  \cref{assum} hold.
Then there are random variables  $T(\omega)$ and $\zeta_1(\omega)$,   where $\zeta_1$ is tempered, such that any
  solution $v$  of the system  \ceqref{2.2} corresponding to initial  values  $v(0) \in H $ satisfies
  \begin{align}
 \|v(t,\theta_{-t} \omega, v(0))\|^2
 \leq e^{-\lambda t}\|v(0)\|^2
 + \zeta_1(\omega) ,\quad t\geq T_1(\omega).  \nonumber
\end{align}
\end{lemma}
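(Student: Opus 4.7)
The plan is to obtain a standard energy estimate on $\|v\|^2$ by testing \ceqref{2.2} with $v$ in $H$, extract the effective dissipation by exploiting the cancellation $(B(u,w),w)=0$ together with the threshold in \cref{assum}, and then close the argument with Gronwall's lemma and the ergodicity identity \ceqref{erg}.

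First, I would take the $H$-inner product of \ceqref{2.2} with $v$. Using integration by parts and the divergence-free property of both $v$ and $h$, the cubic term $\big(B(v+hz(\theta_t\omega)),v\big)$ collapses to $z(\theta_t\omega)\int_{\TT^2}\bigl((v+hz(\theta_t\omega))\cdot\nabla h\bigr)\cdot v\,\d x$, which is bounded in absolute value by $\|\nabla h\|_{L^\infty}|z(\theta_t\omega)|\bigl(\|v\|+|z(\theta_t\omega)|\|h\|\bigr)\|v\|$. The key point is that no $\|v\|_{H^1}$ factor appears in front of $\|v\|^2$, so that this term can be compared directly to $\nu\lambda_1\|v\|^2$ coming from the dissipation together with Poincar\'e's inequality. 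The forcing and Ornstein-Uhlenbeck driven terms $(f,v)$, $-\nu z(\theta_t\omega)(Ah,v)$ and $z(\theta_t\omega)(h,v)$ are handled by Young's inequality, absorbing the quadratic part of $\|v\|$ into a small fraction of $\nu\|v\|_{H^1}^2$.

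Collecting these estimates and applying \ceqref{poin} produces a differential inequality of the form
\begin{equation*}
\frac{\d}{\d t}\|v\|^2+\bigl(a_0-c_1|z(\theta_t\omega)|\bigr)\|v\|^2\leq g(\theta_t\omega),
\end{equation*}
where $a_0$ is a positive multiple of $\nu\lambda_1$, and $g$ is a polynomial in $|z(\theta_t\omega)|$ with coefficients depending on $\|f\|$ and Sobolev norms of $h$. Replacing $\omega$ by $\theta_{-t}\omega$, Gronwall's lemma on $[0,t]$ yields
\begin{equation*}
\|v(t,\theta_{-t}\omega,v(0))\|^2\leq e^{-\int_0^t(a_0-c_1|z(\theta_{s-t}\omega)|)\d s}\|v(0)\|^2+\int_0^t e^{-\int_\eta^t(a_0-c_1|z(\theta_{s-t}\omega)|)\d s}\,g(\theta_{\eta-t}\omega)\,\d\eta.
\end{equation*}

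The crucial step is to show that the exponent grows linearly in $t$ at a positive rate. By the ergodic identity \ceqref{erg}, the Birkhoff average of $|z(\theta_s\omega)|$ equals $\mathbb E|z|=\Gamma(1)/\sqrt\pi=1/\sqrt\pi$, so the averaged coefficient is $a_0-c_1/\sqrt\pi$; matching the constants $a_0$ and $c_1$ produced by the Young's inequality bookkeeping above to the explicit threshold $\|\nabla h\|_{L^\infty}<\sqrt\pi\,\nu\lambda_1$ of \cref{assum} makes this average strictly positive. Hence for every $\omega$ there exists $T(\omega)>0$ and $\lambda>0$ such that $\int_0^t(a_0-c_1|z(\theta_{s-t}\omega)|)\d s\geq \lambda t$ for all $t\geq T(\omega)$, the first term on the right is bounded by $e^{-\lambda t}\|v(0)\|^2$, and the integral converges as $t\to\infty$ to a tempered random variable $\zeta_1(\omega)$ because $g$ has at most polynomial growth in the tempered process $|z(\theta_\cdot\omega)|$.

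The main obstacle is the last step, namely the quantitative matching between the constant $c_1$ emerging from the trilinear-form estimate and the sharp coefficient $\sqrt\pi$ appearing in \cref{assum}: all Young's inequality splittings must be arranged so that the term $\|\nabla h\|_{L^\infty}|z(\theta_t\omega)|\|v\|^2$ is not inflated by any avoidable multiplicative constant, and so that the remaining $|z|^2$ contributions are absorbed into $g$ rather than into the coefficient of $\|v\|^2$. Once this is done, temperedness of $\zeta_1$ follows from temperedness of $|z(\theta_t\omega)|$ and standard arguments on exponentially weighted integrals of tempered processes.
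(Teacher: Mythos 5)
Your proposal is correct and follows essentially the same route as the paper: test with $v$, use the cancellation of the trilinear form so that only the $(\cdot\cdot\nabla h)$ contribution survives with coefficient $\|\nabla h\|_{L^\infty}|z(\theta_t\omega)|$ on $\|v\|^2$, absorb the remaining terms via Young and Poincar\'e, apply Gronwall in the pullback variable, and use the ergodic average $\mathbb{E}|z|=1/\sqrt{\pi}$ to turn the hypothesis $\|\nabla h\|_{L^\infty}<\sqrt{\pi}\nu\lambda_1$ into a strictly positive exponential decay rate $\lambda$. The paper carries out the constant bookkeeping you flag as the main obstacle by introducing parameters $\alpha,\beta$ with $\|\nabla h\|_{L^\infty}/\sqrt{\pi}=(1-\alpha)\nu\lambda_1$, which is exactly the ``no avoidable inflation'' arrangement you describe.
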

\begin{proof}
Note that since,  by  \ceqref{2.3}, $  {\|\nabla h\|_{L^\infty} }/{\sqrt \pi} < \nu  \lambda_1$, there exist  $  \alpha \in  (0,1]$ and $\beta>0$  such that
\begin{gather}
  \frac{\|\nabla h\|_{L^\infty} }{\sqrt \pi}  =  (1- \alpha) \nu  \lambda_1,  \label{c1} \\
   \frac{  \|\nabla h\|_{L^\infty}  }{ \sqrt{\pi}}  \left( 1 +  \beta \right )  =   \nu \lambda_1 -\frac  1 2 \alpha \nu  \lambda_1  .  \label{c2}
  \end{gather}
 Then multiplying the  equation    \ceqref{2.2} by $v$ and integrating over $\mathbb T^2$, by integration by parts we have
\[
\frac12\frac{\d}{\d t}\|v\|^2+ \nu  \|A^{\frac12}v\|^2
 =-(B(v+hz(\theta_t\omega)) ,v)  + \big (f- \nu Ahz(\theta_t\omega) +hz(\theta_t\omega),v\big),
\]
so by Poincar\'e's inequality  \ceqref{poin},
\begin{align}
 & \frac{\d}{\d t}\|v\|^2+ \left(2-\frac \alpha 2\right)  \nu  \lambda_1  \| v\|^2  + \frac {\alpha \nu } 2   \|A^{\frac12}v\|^2 \nonumber  \\
 &\quad \leq \frac{\d}{\d t}\|v\|^2  + 2\nu  \|A^{\frac12}v\|^2 \nonumber\\
&\quad  \leq 2 \big |\big(B(v+hz(\theta_t\omega)) ,v \big) \big|
+2\big| \big (f- \nu Ahz(\theta_t\omega) +hz(\theta_t\omega),v \big)\big|  . \label{4.3}
\end{align}
Applying H\"{o}lder's, Young's and Poincar\'{e}'s inequalities, we obtain
\begin{align}
   2\big | \big(B(v+hz(\theta_t\omega)) ,v \big) \big|   & \leq  2 \big | \big(B(v,hz(\theta_t\omega)),v \big) \big| +2 \big| \big (B(hz(\theta_t\omega),hz(\theta_t\omega)),v \big) \big| \nonumber\\
&\leq 2 |z(\theta_t\omega)|\int_{\mathbb{T}^2}|v|^2|\nabla h|\ \d x+ 2|z(\theta_t\omega)|^2\int_{\mathbb{T}^2}|v||h||\nabla h|\ \d x\nonumber\\
&\leq 2 |z(\theta_t\omega)|\|\nabla h\|_{L^\infty}\|v\|^2+ 2|z(\theta_t\omega)|^2\|\nabla h\|_{L^\infty}\|v\|\|h\|\nonumber\\
&\leq 2 |z(\theta_t\omega)|\|\nabla h\|_{L^\infty}\|v\|^2+  \frac{\alpha \nu \lambda_1} {8} \|v\|^2+ C|z(\theta_t\omega)|^4\|\nabla h\|_{L^\infty}^2\|h\|^2 , \nonumber
\end{align}
and
\begin{align}
 2\big| \big (f-\nu Ahz(\theta_t\omega) +hz(\theta_t\omega),v \big)\big|  \leq \frac{\alpha \nu \lambda_1}{8}\|v\|^2+C\|f\|^2
 + C|z(\theta_t\omega)|^2 \left(\|Ah\|^2 + \|h\|^2\right) .\nonumber
\end{align}
Substituting these inequalities   into  \ceqref{4.3} yields
\ben
&
\frac{\d}{\d t}\|v\|^2+ \left(2-\frac {3 \alpha} 4\right) \nu \lambda_1  \| v\|^2  + \frac {\alpha \nu }  2 \|A^{\frac12}v\|^2  \\
 &\quad \leq 2|z(\theta_t\omega)|\|\nabla h\|_{L^\infty}\|v\|^2+ C |z(\theta_t\omega)|^4\|\nabla h\|_{L^\infty}^2\|h\|^2\nonumber\\
&\qquad + C\|f\|^2+ C |z(\theta_t\omega)|^2 \left (\|h\|^2+\|Ah\|^2 \right)\nonumber\\
&\quad \leq 2|z(\theta_t\omega)|\|\nabla h\|_{L^\infty}\|v\|^2+C \left (1+|z(\theta_t\omega)|^4 \right).
\ee
Therefore,
\begin{align*}
\frac{\d}{\d t}\|v\|^2+ \left[ \left(2-\frac{3\alpha} 4\right ) \nu \lambda_1 -2\|\nabla h\|_{L^\infty}|z(\theta_t\omega)| \right ]
\|v\|^2+\frac{ \alpha \nu } 2\|A^{\frac12}v\|^2
\leq C  \left (1+|z(\theta_t\omega)|^4 \right)  .
\end{align*}
Applying  Gronwall's lemma, we obtain
\begin{align}
&\|v(t)\|^2+\frac {\alpha \nu } 2\int_0^te^{ \left(2-\frac{3\alpha} 4\right )\nu  \lambda_1(s-t)+2\|\nabla h\|_{L^\infty}\int_{s}^t|z(\theta_\tau\omega)|\, \d \tau}\|A^{\frac12}v(s)\|^2\, \d s\nonumber\\
&\quad \leq e^{-\left(2-\frac{3\alpha} 4\right ) \nu  \lambda_1t+2\|\nabla h\|_{L^\infty}\int_{0}^t|z(\theta_\tau\omega)|\, \d \tau}\|v(0)\|^2  \nonumber \\
& \qquad
+C\int_0^te^{ \left(2-\frac{3\alpha} 4\right ) \nu \lambda_1(s-t)+2\|\nabla h\|_{L^\infty}\int_{s}^t|z(\theta_\tau\omega)|\, \d \tau}  \left (1+|z(\theta_s\omega)|^4 \right) \d s, \nonumber
\end{align}
and then
\begin{align}
&\|v(t)\|^2+\frac {\alpha \nu} 2\int_0^te^{ \left(2-\frac{3\alpha} 4\right ) \nu \lambda_1(s-t) }\|A^{\frac12}v(s)\|^2\, \d s\nonumber\\
&\quad \leq e^{-\left(2-\frac{3\alpha} 4\right ) \nu \lambda_1t+2\|\nabla h\|_{L^\infty}\int_{0}^t|z(\theta_\tau\omega)|\, \d \tau}\|v(0)\|^2  \nonumber \\
& \qquad
+C\int_0^te^{ \left(2-\frac{3\alpha} 4\right ) \nu \lambda_1(s-t)+2\|\nabla h\|_{L^\infty}\int_{s}^t|z(\theta_\tau\omega)|\, \d \tau}  \left (1+|z(\theta_s\omega)|^4 \right)  \d s. \label{sep6.5}
\end{align}

Note that by the ergodicity  \ceqref{erg} for $m=1$, we have
\[
\lim\limits_{t\rightarrow \pm \infty}\frac{1}{t}\int_0^t|z(\theta_s\omega)| \, \d s=\mathbb{E}(|z(\theta_t\omega)| )=\frac{ 1}{\sqrt{\pi}}
, \quad \omega\in \Omega,
\]
so    there exists a  random variable  $ T_1(\omega)\geq1$ such that, for all $t\geq T_1(\omega)$,
 \[
 \frac 1t \int_{0}^t|z(\theta_s\omega)|\, \d s
 \leq  \frac{  1 }{\sqrt{\pi}} + \frac{\beta}{\sqrt{\pi}}     ,
\]
and then
\begin{align}
2\|\nabla h\|_{L^\infty}\int_{0}^t|z(\theta_s\omega)|\, \d s &\leq
   \frac{2\|\nabla h\|_{L^\infty}  }{ \sqrt{\pi}}  \left( 1 +  \beta  \right ) t  \nonumber \\
   &= \left (2 \nu \lambda_1 -  \alpha \nu  \lambda_1 \right) t \quad \text{(by  \ceqref{c2})} .\label{erg1}
\end{align}
Hence,  by   \ceqref{sep6.5} and  \ceqref{erg1},
\begin{align}
&\|v(t)\|^2+\frac {\alpha \nu }2\int_0^te^{ \left(2-\frac{3\alpha} 4\right )\nu  \lambda_1(s-t) }\|A^{\frac12}v(s)\|^2\, \d s\nonumber\\
&\quad \leq e^{ - \frac{\alpha} 4  \nu  \lambda_1t }\|v(0)\|^2
+C\int_0^te^{ \left(2-\frac{3\alpha} 4\right )\nu  \lambda_1(s-t)+2\|\nabla h\|_{L^\infty}\int_{s}^t|z(\theta_\tau\omega)|\, \d \tau}  \left (1+|z(\theta_s\omega)|^4 \right)   \d s. \nonumber
\end{align}
With   $\lambda:= \alpha \nu    \lambda_1 /4, $
 this estimate is then rewritten as
\begin{align}
&\|v(t)\|^2+\frac {\alpha \nu } 2\int_0^te^{ \left( \frac 8 \alpha -3  \right) \lambda (s-t)}\|A^{\frac12}v(s)\|^2\, \d s\nonumber\\
 &\quad\leq e^{-\lambda t}\|v(0)\|^2
 +C\int_0^te^{ \left( \frac 8 \alpha -3  \right) \lambda (s-t)+2\|\nabla h\|_{L^\infty}\int_{s}^t|z(\theta_\tau\omega)|\, \d \tau}  \left (1+|z(\theta_s\omega)|^4 \right)  \d s. \nonumber
\end{align}
Replacing $\omega$ with $\theta_{-t}\omega$ yields
 \begin{align}
&\|v(t, \theta_{-t}\omega, v(0))\|^2+\frac {\alpha  \nu }2\int_0^te^{ \left( \frac 8 \alpha -3  \right) \lambda (s-t)}\|A^{\frac12}v(s, \theta_{-t}\omega, v(0))\|^2\, \d s\nonumber\\
 &\quad\leq e^{-\lambda t}\|v(0)\|^2
 +C\int^0_{-t} e^{ \left( \frac 8 \alpha -3  \right) \lambda  s +2\|\nabla h\|_{L^\infty}\int_{s}^0 |z(\theta_\tau\omega)|\, \d \tau}  \left (1+|z(\theta_s\omega)|^4 \right) \d s. \nonumber
\end{align}
Define a random variable by
\ben
 \zeta_1(\omega) :=C\int_{-\infty}^0  e^{ \left( \frac 8 \alpha -3  \right) \lambda  s +2\|\nabla h\|_{L^\infty}\int_{s}^0 |z(\theta_\tau\omega)|\, \d \tau}  \left (1+|z(\theta_s\omega)|^4 \right)   \d s,\quad \omega\in \Omega .
\ee
Then $\zeta_1(\cdot)$ is a tempered random variable such that  $\zeta_1(\omega)\geq 1$ and
\begin{align}
&\|v(t,\theta_{-t} \omega, v(0))\|^2+ \frac {\alpha \nu } 2\int_0^t e^{ \left ( \frac{8}{\alpha} -3  \right) \lambda (s-t)}\|A^{\frac12}v(s, \theta_{-t}\omega, v(0))\|^2\, \d s \nonumber \\
 &\quad \leq e^{-\lambda t}\|v(0)\|^2
 + \zeta_1(\omega) ,\quad t\geq T_1(\omega).  \label{4.29}
\end{align}
The lemma follows.
\end{proof}

\begin{lemma}[$H^1$ bound] \label{lem:H1bound}
Let  $f\in H$ and  \cref{assum} hold.
Then for any bounded set $ B$ in $ H$ there is a random variable $ T_B(\omega)>T_1(\omega)$ such that,  for all $t\geq T_B(\omega)$,
\[
\sup_{v(0)\in B} \left( \big\|A^{\frac12}v(t,\theta_{-t}\omega,v(0))  \big\|^2 + \int_{t-\frac 12 }^t \|Av(s,\theta_{-t}\omega,v(0))\|^2  \ \d s\right)
\leq  \zeta_2(\omega),
\]
where $\zeta_2(\omega)$  is a tempered random variable given by  \ceqref{mar8.8} such that $\zeta_2(\omega) > \zeta_1(\omega)\geq 1$, $ \omega\in\Omega$.
\end{lemma}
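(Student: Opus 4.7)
The plan is to test equation \ceqref{2.2} against $Av$ and integrate over $\TT^2$, yielding the energy identity
\[
\frac12 \frac{\d}{\d t}\|A^{\frac12}v\|^2 + \nu\|Av\|^2 = -\big(B(v+hz(\theta_t\omega)),\, Av\big) + \big(f - \nu Ahz(\theta_t\omega) + hz(\theta_t\omega),\, Av\big).
\]
I will then expand the nonlinear term as the four pieces $(B(v,v),Av) + (B(v,hz),Av) + (B(hz,v),Av) + (B(hz,hz),Av)$ and estimate each one.

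For the genuinely quadratic Navier--Stokes piece, I will invoke Ladyzhenskaya's 2D inequality (or the Gagliardo--Nirenberg inequality stated in the paper) to get $|(B(v,v),Av)| \leq c\,\|v\|^{1/2}\|A^{\frac12}v\|\,\|Av\|^{3/2}$, and then use Young's inequality to extract $\tfrac{\nu}{8}\|Av\|^2 + C\|v\|^2\|A^{\frac12}v\|^4$. The three mixed/pure noise pieces are bounded in a similar way via H\"older, $\|h\|_{L^\infty}$, $\|\nabla h\|_{L^\infty}$ (which are finite since $h\in H^3$), and Young's inequality, each contributing a controllable $\tfrac{\nu}{8}\|Av\|^2$ plus a remainder that is polynomial in $|z(\theta_t\omega)|$ times $\|A^{\frac12}v\|^2$ or purely a function of $t$. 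The forcing terms $(f,Av)$, $(\nu Ah z, Av)$, $(hz,Av)$ are handled identically. Absorbing all the $\|Av\|^2$ contributions into the left-hand side leaves
\[
\frac{\d}{\d t}\|A^{\frac12}v\|^2 + \nu\|Av\|^2 \leq \alpha(t)\,\|A^{\frac12}v\|^2 + \beta(t),
\]
where $\alpha(t) = C\big(\|v(t)\|^2\|A^{\frac12}v(t)\|^2 + |z(\theta_t\omega)|^2\big)$ and $\beta(t)$ is a polynomial in $|z(\theta_t\omega)|$ plus $C\|f\|^2$.

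At this point I will apply the uniform (Gronwall-type) integral lemma on intervals of length $\tfrac12$. The three necessary integrability ingredients are all at hand: from \cref{lemma4.1}, for $t \geq T_1(\omega)+1$ both $\|v(t,\theta_{-t}\omega,v(0))\|^2$ and $\int_{t-1/2}^t \|A^{\frac12}v(s,\theta_{-t}\omega,v(0))\|^2\,\d s$ are bounded by a tempered quantity depending only on the $H$-size of $B$, while the ergodicity identity \ceqref{erg} with $m=2,4$ makes $\int_{t-1/2}^t |z(\theta_s\omega)|^k \d s$ tempered. Consequently $\int_{t-1/2}^t \alpha(s)\,\d s$ and $\int_{t-1/2}^t \beta(s)\,\d s$ are bounded by tempered random variables, and the uniform Gronwall lemma yields a bound $\|A^{\frac12}v(t,\theta_{-t}\omega,v(0))\|^2 \leq \zeta_2(\omega)$ for all $t\geq T_B(\omega)$, uniformly for $v(0)\in B$. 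Integrating the same differential inequality over $[t-\tfrac12,t]$ and using the pointwise $H^1$ bound at the left endpoint then produces the companion $L^2_t(H^2)$ bound $\int_{t-1/2}^t\|Av(s)\|^2 \d s \leq \zeta_2(\omega)$, after enlarging $\zeta_2$ if necessary. Finally, I will define $\zeta_2(\omega)$ explicitly as a convolution-type integral in $s\in(-\infty,0]$ of the resulting right-hand side (with $\omega$ replaced by $\theta_{-t}\omega$), mirroring the construction of $\zeta_1$ in \cref{lemma4.1}; its temperedness follows from the temperedness of $|z(\theta_s\omega)|$ and the exponential weight produced by the Gronwall step, and we can arrange $\zeta_2(\omega) > \zeta_1(\omega) \geq 1$ by adding $\zeta_1(\omega)$.

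The main technical obstacle is the quartic $\|v\|^2\|A^{\frac12}v\|^4$ coefficient generated by the Navier--Stokes nonlinearity in 2D, which prevents any direct classical Gronwall argument; the uniform-Gronwall trick works only because \cref{lemma4.1} already provides the crucial time-integrated $H^1$ control that turns this coefficient into an integrable one. A secondary bookkeeping issue is keeping the dependence on $\|\nabla h\|_{L^\infty}$ and on $|z(\theta_t\omega)|$ sharp enough that the random integrals defining $\zeta_2$ converge on $(-\infty,0]$; this is automatic provided the exponential decay rate from Gronwall on length-$\tfrac12$ intervals dominates the linear growth $2\|\nabla h\|_{L^\infty}\int_s^0|z(\theta_\tau\omega)|\d\tau$, which is exactly the inequality \ceqref{erg1} that was set up in the proof of \cref{lemma4.1}.
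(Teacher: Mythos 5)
Your proposal is correct in outline but follows a genuinely different route from the paper. The paper's proof hinges on the periodic-boundary orthogonality identity $(B(u,u),Au)=0$ on $\TT^2$: writing $u=v+hz(\theta_t\omega)$, the whole trilinear term reduces to $\big(B(u,u),Ahz(\theta_t\omega)\big)$, so after Young's inequality the differential inequality \ceqref{4.27} has the \emph{linear} Gronwall coefficient $C(1+|z(\theta_t\omega)|^2)$ and a forcing $C(1+|z(\theta_t\omega)|^6)$ — no quartic term in $\|A^{1/2}v\|$ ever appears. You instead keep $(B(v,v),Av)$ and estimate it by Ladyzhensk\-aya/Gagliardo--Nirenberg, which produces the coefficient $\alpha(t)=C(\|v\|^2\|A^{1/2}v\|^2+|z(\theta_t\omega)|^2)$ and forces you into the uniform-Gronwall argument. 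This does work, but at two costs. First, to bound $\int_{t-1/2}^t\alpha(s)\,\d s$ you need $\sup_{s\in[t-1/2,t]}\|v(s,\theta_{-t}\omega,v(0))\|^2$, whereas \cref{lemma4.1} only gives the endpoint value $\|v(t,\theta_{-t}\omega,v(0))\|^2$; the extension is routine (the shift argument the paper uses later around \ceqref{mar8.1}) but should be stated. Second, your $\zeta_2$ acquires a factor $\exp\big(C\sup_s\|v\|^2\int\|A^{1/2}v\|^2\big)$, i.e.\ an exponential of a quantity comparable to $\zeta_1(\omega)^2$, whose temperedness requires the logarithmic-growth properties of the Ornstein--Uhlenbeck process and is strictly harder to justify than the paper's $\exp\big(C\int_{-1}^0|z(\theta_\tau\omega)|^2\d\tau\big)$ in \ceqref{mar8.8} (though the paper itself invokes exponentials of this stronger type in later lemmas). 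In short, the torus identity is what the paper buys with the periodic setting — it keeps the constants small and the temperedness transparent — while your classical argument is more portable to domains where $(B(u,u),Au)\neq 0$ but needs these two extra justifications to be complete.
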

\begin{proof}
Multiplying   \ceqref{2.2} by $Av$ and integrating over $\mathbb T^2$, by integration by parts we have
\begin{align}\label{4.6}
\frac12\frac{\d}{\d t}\|A^{\frac12}v\|^2+ \nu \|Av\|^2&=-\big(B(v+hz(\theta_t\omega),v+hz(\theta_t\omega)),\, Av \big)\nonumber\\
&\quad \ +\big(f-\nu Ahz(\theta_t\omega) +hz(\theta_t\omega),Av \big).
\end{align}
 Applying the H\"{o}lder inequality,  the Gagliardo-Nirenberg inequality, Poincar\'{e}'s inequality and the Young inequality, we obtain
\begin{align}\label{4.4}
&\big| \big(B(v+hz(\theta_t\omega),v+hz(\theta_t\omega)),\, Av \big)\big|
= \big| \big(B(v+hz(\theta_t\omega),v+hz(\theta_t\omega)),Ahz(\theta_t\omega) \big) \big| \nonumber\\
& \quad \leq \|Ahz(\theta_t\omega)\|\|v+hz(\theta_t\omega)\|_{L^4}\|A^{\frac12}(v+hz(\theta_t\omega))\|_{L^4}\nonumber\\
& \quad \leq C\|Ahz(\theta_t\omega)\|\|v+hz(\theta_t\omega)\|^{\frac12}\|A^{\frac12}(v+hz(\theta_t\omega))\|
\|A(v+hz(\theta_t\omega))\|^{\frac12}\nonumber\\
& \quad \leq C\|Ahz(\theta_t\omega)\|\|A^{\frac12}(v+hz(\theta_t\omega))\|^{\frac32}
\|A(v+hz(\theta_t\omega))\|^{\frac12}\nonumber\\
& \quad \leq  C\|Ahz(\theta_t\omega)\| \left (\|A^{\frac12}v\|^{\frac32}+\|A^{\frac12}hz(\theta_t\omega)\|^{\frac32}\right) \left
(\|Av\|^{\frac12}+\|Ahz(\theta_t\omega)\|^{\frac12} \right)\nonumber\\
& \quad \leq \frac  \nu 4\|Av\|^2+C\left (1+\|Ah\|^2|z(\theta_t\omega)|^2 \right )\|A^{\frac12}v\|^2+C\left (1+|z(\theta_t\omega)|^6\right),
\end{align}
and
\begin{align}
\big (f-\nu Ahz(\theta_t\omega) +hz(\theta_t\omega),Av \big)
& \leq \frac  \nu 4 \|Av\|^2+C\|f\|^2+ C  |z(\theta_t\omega)|^2   .\label{4.5}
\end{align}
Substituting  \ceqref{4.4} and  \ceqref{4.5} into  \ceqref{4.6} we have
\begin{align}\label{4.27}
\frac{\d}{\d t}\|A^{\frac12}v\|^2 +  \nu  \|Av\|^2
&\leq  C\left ( 1 + \|Ah\|^2|z(\theta_t\omega)|^2 \right)\|A^{\frac12}v\|^2  +
C \left(1 +|z(\theta_t\omega)|^6+\|f\|^2 \right)\nonumber \\
&\leq  C\left ( 1 +  |z(\theta_t\omega)|^2 \right)  \|A^{\frac12}v\|^2  +
C \left(1 +|z(\theta_t\omega)|^6  \right).
\end{align}
Applying Gronwall's lemma on $(\eta, t)$ with $\eta\in  (t-1,t )$, $t >1$, we have
\be
&\|A^{\frac12}v(t)\|^2
+ \nu \int_\eta ^te^{ \int_s^tC(1+|z(\theta_\tau\omega)|^2)\, \d \tau}\|Av(s)\|^2 \, \d s \nonumber\\
&\quad \leq e^{ \int_\eta^tC(1+|z(\theta_\tau\omega)|^2)\, \d \tau}\|A^{\frac12}v( \eta)\|^2 +C\int_\eta^te^{ \int_s^t C(1+ |z (\theta_\tau\omega)|^2)\, \d \tau} \left (1 +|z(\theta_s\omega)|^6 \right)\d s,
\ee
and then integrating over $\eta\in (t-1,t-\frac 12) $ yields
\be
& \frac12  \|A^{\frac12}v(t )\|^2
+\frac  \nu 2\int_{t-\frac 12 }^t e^{ \int_s^tC(1+|z(\theta_\tau\omega)|^2)\, \d \tau}\|Av(s)\|^2\, \d s \nonumber\\
&\quad \leq \int_{t-1}^{t-\frac 12}  e^{ \int_\eta^tC(1+|z(\theta_\tau\omega)|^2)\, \d \tau} \|A^{\frac12}v( \eta)\|^2 \, \d \eta
 \nonumber\\
&\qquad +C\int_{t-1}^te^{ \int_s^t C(1+ |z (\theta_\tau\omega)|^2)\, \d \tau} \left (1 +|z(\theta_s\omega)|^6 \right) \d s \nonumber\\
&\quad \leq Ce^{   C \int_{ t-1}^t (1+|z(\theta_\tau\omega)|^2) \, \d \tau}  \left[  \int_{t-1}^{t  }  \|A^{\frac12}v( \eta)\|^2 \, \d \eta  + \int_{t-1}^t \left (1 +|z(\theta_s\omega)|^6 \right)\d s\right]  .
\ee
Replacing $\omega$ with $\theta_{-t}\omega$ we obtain
\be
&   \|A^{\frac12}v(t,\theta_{-t}\omega, v(0) )\|^2
+   \nu \int_{t-\frac 12 }^te^{ C(t-s) + \int_{s-t}^0C |z(\theta_\tau\omega)|^2 \, \d \tau}\|Av(s ,\theta_{-t}\omega, v(0))\|^2\, \d s\nonumber\\
&\quad  \leq  C e^{ C \int_{-1}^0 |z (\theta_\tau\omega)|^2\, \d \tau}
\left [ \int_{t-1}^t   \|A^{\frac12}v(s,\theta_{-t}\omega, v(0) )\|^2\, \d s+\int_{-1}^0 \left(1+ |z(\theta_{s} \omega)|^6  \right)  \d s\right].
\ee
By      \ceqref{4.29},
\be
 \nu  \int_{t-1}^t  \|A^{\frac12}v(s,\theta_{-t}\omega, v(0) )\|^2  \, \d s
 & \leq   \nu e^{\left( \frac 8\alpha -3 \right)  \lambda }   \int_{t-1}^t  e^{\left( \frac 8\alpha -3 \right)  \lambda (s-t) } \|A^{\frac12}v(s,\theta_{-t}\omega, v(0) )\|^2 \, \d s
 \\
 & \leq  \frac{2}{\alpha    }   e^{\left( \frac 8\alpha -3 \right)  \lambda }   \left( e^{-\lambda t}\|v(0)\|^2
 +   \zeta_1(\omega)\right) , \quad    t\geq T_1(\omega) , \nonumber
\ee
so
we have
\be
&   \|A^{\frac12}v(t,\theta_{-t}\omega, v(0) )\|^2
 +  \nu \int_{t-\frac 12 }^t  \|Av(s ,\theta_{-t}\omega, v(0))\|^2\, \d s \nonumber\\
 &\quad  \leq  \|A^{\frac12}v(t,\theta_{-t}\omega, v(0) )\|^2
+  \nu \int_{t-\frac 12 }^te^{ C(t-s) + \int_{s-t}^0C |z(\theta_\tau\omega)|^2 \, \d \tau}\|Av(s ,\theta_{-t}\omega, v(0))\|^2\, \d s\nonumber\\
&\quad \leq   C e^{ C \int_{-1}^0 |z (\theta_\tau\omega)|^2\, \d \tau} \left(   e^{-\lambda t}\|v(0)\|^2
 +   \zeta_1(\omega)  +
\int_{-1}^0 |z(\theta_{s} \omega)|^6  \, \d s \right) , \quad    t\geq T_1(\omega) ,
\ee
where  we have used the fact  that $\zeta_1(\omega)\geq 1$.
Therefore, by
\be \label{mar8.8}
 \zeta_2(\omega) :=   C e^{ C \int_{-1}^0 |z (\theta_\tau\omega)|^2\, \d \tau} \left(      \zeta_1(\omega)  +
\int_{-1}^0 |z(\theta_{s} \omega)|^6   \ \d s \right) ,
\quad \omega\in \Omega,
\ee
we defined a tempered random variable  and the lemma follows.
\end{proof}

We define a random set
$\mathfrak{B}=\{\mathfrak{B}(\omega)\}_{\omega\in\Omega}$ in $H^1$  by
\begin{align}\label{4.19}
\mathfrak{B}(\omega):= \left \{ v \in H^1:\|A^{\frac12}v\|^2\leq \zeta_2(\omega)
\right \}, \quad \omega\in\Omega,
\end{align}
where $\zeta_2(\cdot)$ is the tempered random variable defined by  \ceqref{mar8.8}. Then   by  \cref{lem:H1bound}  $\mathfrak{B}$ is a  random absorbing set of the RDS $\phi$ generated by the  NS equation   \ceqref{2.2}.  In addition,  the compact embedding $H^1 \hookrightarrow H$ gives the compactness of $\mathfrak B(\omega)$ in $H$, hence the RDS $\phi$ has a    random attractor $\A$ in $H$. In addition, in view of Langa \& Robinson \cite{langa2}, this random attractor has finite fracal dimension in $H$.  Summarizing,

\begin{theorem}\label{theorem4.6}  Let  \cref{assum} hold and $f\in H$. Then the  RDS $\phi$ generated by  the random NS equations  \ceqref{2.2} has a  random absorbing set $\mathfrak{B}$ which is a tempered and bounded random set in $H^1 $, and has also a   random attractor $\mathcal A $ in $H$. In addition, $\mathcal A$ has finite fractal dimension in $H$:
\[
d_f^H(\A(\omega))  \leq d,\quad \omega \in \Omega,
\]
for some positive constant $d$.
\end{theorem}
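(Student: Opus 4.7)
The plan is to verify the three claims by leveraging the estimates already established. First, I would upgrade the $H^1$ bound of \cref{lem:H1bound} from bounded subsets of $H$ to tempered random sets in $\D_H$. Given $\mathfrak D\in \D_H$ with $\sup_{v_0\in \mathfrak D(\omega)}\|v_0\|\leq r(\omega)$ for a tempered $r$, \cref{lemma4.1} applied pullback-wise yields an $H$-bound whose $e^{-\lambda t}r(\theta_{-t}\omega)^2$ part vanishes as $t\to\infty$ by temperedness of $r$; consequently, after an entry time $T_{\mathfrak D}(\omega)$ the trajectory at time $t-\tfrac12$ lies in a fixed tempered $H$-ball. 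Feeding this bound into \cref{lem:H1bound} over the last half time unit then yields $\|A^{1/2}v(t,\theta_{-t}\omega,v_0)\|^2\leq \zeta_2(\omega)$, so $v(t,\theta_{-t}\omega,v_0)\in \mathfrak B(\omega)$ for all $t\geq T_{\mathfrak D}(\omega)$. Temperedness and measurability of $\mathfrak B$ defined in \ceqref{4.19} are inherited from those of $\zeta_2$ in \ceqref{mar8.8}.

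Second, for the random attractor in $H$, I would appeal to \cref{lem:cui18} with $X=Y=H$. The embedding $H^1\hookrightarrow H$ on the bounded domain $\TT^2$ is compact, so $\mathfrak B(\omega)$ is a tempered compact random set in $H$; any sequence $\phi(t_n,\theta_{-t_n}\omega,x_n)$ with $x_n\in \mathfrak D(\theta_{-t_n}\omega)$ and $t_n\to\infty$ eventually lies in $\mathfrak B(\omega)$, which gives $(H,H)$-asymptotic compactness of $\phi$, and the criterion produces a unique random attractor $\A\subset \mathfrak B$.

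For the finite fractal dimension in $H$, I would invoke the Langa--Robinson-type result referenced right after \cref{lem:cui}. Its hypotheses are that $\{\theta_t\}_{t\in \R}$ is ergodic (true for the Wiener shift) and that $\phi$ is locally Lipschitz in initial data in $H$; the latter follows from multiplying the difference equation by the difference and applying Gr\"onwall together with the $H^1$ bound by $\zeta_2$ on $\mathfrak B$. Combined with a standard volume-contraction estimate on the trace of the linearized operator $-\nu A-B'(v+hz(\theta_t\omega))$ along the attractor, and using the ergodicity identity \ceqref{erg} to time-average the $|z(\theta_t\omega)|^k$ contributions, one obtains a deterministic integer $d$ such that $d_f^H(\A(\omega))\leq d$ for $\mathbb P$-a.e.\ $\omega$. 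The main obstacle is this last step: one must verify that the sum of the $n$ largest Lyapunov exponents along $\A$ is strictly negative for some finite $n$, which hinges on the same dissipation-versus-noise balance (\cref{assum} together with \ceqref{c2} and \ceqref{erg1}) that drives the basic $H$-estimate.
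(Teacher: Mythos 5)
Your proposal is correct and follows essentially the same route as the paper: the $H^1$ absorbing set is \(\mathfrak B(\omega)=\{v:\|A^{1/2}v\|^2\le\zeta_2(\omega)\}\) from \cref{lem:H1bound}, compactness of the embedding \(H^1\hookrightarrow H\) gives the attractor via \cref{lem:cui18}, and finite fractal dimension in \(H\) is obtained by appeal to Langa--Robinson. You are in fact more careful than the paper on two points it glosses over --- extending absorption from bounded to tempered sets using the decay of \(e^{-\lambda t}\|v(0)\|^2\), and identifying the trace/volume-contraction estimate as the real content behind the Langa--Robinson citation --- but the underlying argument is the same.
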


 In the sequel we shall show that this attractor $\A$ is in fact an $(H,H^2)$-random attractor of $\phi$.

\section{Construction of  an $H^2$ random absorbing set}\label{sec4}

In this section we shall construct an  $H^2$ random absorbing set of  \ceqref{2.2}.   This will be done by estimating  the difference between the   solutions of the random  equation  \ceqref{2.2} and that of the  deterministic  equation  \ceqref{2.1} within the global attractor $\A_0$, see  \cref{rmk1}.  This comparison approach seems first employed in Cui \& Li \cite{cui}.

\subsection{$H^2$-distance between random and deterministic trajectories}

Since  \cref{theorem4.6}  shows that the random NS equation  \ceqref{2.2}  has an $H^1$ random absorbing set $\mathfrak B$ and  \cref{lem:det} shows that  the deterministic equation   \ceqref{2.1} has a global attractor $\mathcal A_0$ bounded in $H^2$, it suffices to restrict ourselves to the random absorbing set  $\mathfrak B$ and the  global attractor $\mathcal A_0$.
  Note that    the $\omega$-dependence  of each  random time in the following estimates will be crucial for later analysis.

\begin{lemma}[$H^1$-distance]
\label{lemma4.3}
 Let  \cref{assum} hold and $f\in H$. Then there   exist   random variables $T_{\mathfrak B}(\cdot)$ and    $\zeta_4(\cdot)$, where  $\zeta_4(\cdot)$ is tempered,  such that the solutions $v$ of the random NS equations  \ceqref{2.2} and $u$  of the deterministic  equations   \ceqref{2.1} satisfy
\begin{align}
 \big \|A^{\frac 12} v(t,\theta_{-t}\omega,v(0))-A^{\frac 12} u(t,u(0)) \big\| ^2 \leq \zeta_4(\omega),\quad \forall t\geq T_{\mathfrak B}(\omega), \nonumber
\end{align}
for all $v(0)\in\mathfrak{B}(\theta_{-t}\omega)$
 and $u(0)\in\mathcal{A}_0$ for $\omega\in \Omega$.
\end{lemma}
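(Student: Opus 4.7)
The plan is to apply the comparison trick outlined in \cref{rmk1}. Set $w:=v-u$ where $v$ solves the random equation \ceqref{2.2} and $u$ solves the deterministic equation \ceqref{2.1} with $u(0)\in\A_0$. Subtracting \ceqref{2.1} from \ceqref{2.2} and using the identity
\[
B(v+hz)-B(u)=B(w+hz,\,v+hz)+B(u,\,w+hz),
\]
the difference $w$ satisfies
\[
\partial_t w+\nu Aw+B(w+hz,\,v+hz)+B(u,\,w+hz)=-\nu A h\,z(\theta_t\omega)+h\,z(\theta_t\omega),
\]
with initial datum $\|A^{1/2}w(0)\|^2\le 2\zeta_2(\theta_{-t}\omega)+2M_0$, where $M_0:=\sup_{u_0\in\A_0}\|Au_0\|^2<\infty$ by \cref{lem:det} and $\zeta_2$ is tempered.

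Next I would take the inner product with $Aw$ and integrate over $\mathbb T^2$, expanding the nonlinearities as
\[
B(w+hz,v+hz)=B(w,v)+B(w,hz)+B(hz,v)+B(hz,hz),\quad B(u,w+hz)=B(u,w)+B(u,hz),
\]
and estimating each trilinear term by H\"older's inequality, the Gagliardo-Nirenberg inequality, Poincar\'e's inequality, and Young's inequality, exactly as in the proof of \cref{lem:H1bound}. The available bounds are $\|Au(s)\|\le \sqrt{M_0}$ for $u(0)\in\A_0$, $\|A^{1/2}v(s)\|^2\le \zeta_2(\omega)$ plus a controlled time-integral of $\|Av(s)\|^2$ from \cref{lem:H1bound}, and $\|h\|_{H^2}+\|\nabla h\|_{L^\infty}<\infty$ from \cref{assum}. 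After absorbing a factor of $\frac\nu2\|Aw\|^2$ into the dissipation, this should yield a differential inequality of the schematic form
\[
\frac{d}{dt}\|A^{1/2}w\|^2+\frac\nu2\|Aw\|^2\le P(\theta_t\omega)\,\|A^{1/2}w\|^2+Q(\theta_t\omega),
\]
where $P,Q$ are polynomials in $|z(\theta_t\omega)|$ and the tempered bounds on $v$ and $u$.

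Then I would follow the Gronwall scheme of \cref{lemma4.1}: use $\|Aw\|^2\ge\lambda_1\|A^{1/2}w\|^2$ to extract a linear dissipation, apply Gronwall's lemma on $[0,t]$, and replace $\omega$ by $\theta_{-t}\omega$. Ergodicity \ceqref{erg} controls the time-integrals $\int_0^t|z(\theta_{s-t}\omega)|^k\,ds$ by their stationary means for $t\ge T_{\mathfrak B}(\omega)$, and the initial contribution $e^{-\lambda t}(\zeta_2(\theta_{-t}\omega)+M_0)$ vanishes as $t\to\infty$ by temperedness of $\zeta_2$. The tempered variable $\zeta_4(\omega)$ would then be defined as a convergent improper integral
\[
\zeta_4(\omega):=C\int_{-\infty}^0 e^{\alpha s+2\|\nabla h\|_{L^\infty}\int_s^0|z(\theta_\tau\omega)|\,d\tau}R\big(|z(\theta_s\omega)|,\zeta_2(\theta_s\omega)\big)\,ds
\]
for a suitable polynomial $R$ and constant $\alpha>0$.

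The main obstacle will be verifying that the net coefficient of $\|A^{1/2}w\|^2$, after subtracting the Poincar\'e dissipation $-\nu\lambda_1$, has a strictly negative ergodic mean. The critical contribution comes from the term $\bigl(B(w,hz),Aw\bigr)$, which after integration by parts and Ladyzhenskaya-type estimates produces a coefficient of the form $c|z(\theta_t\omega)|\|\nabla h\|_{L^\infty}\|A^{1/2}w\|^2$ with $c$ close to $2$; combined with $-\nu\lambda_1$, ergodicity turns the time-average of the total coefficient into $-\nu\lambda_1+c\|\nabla h\|_{L^\infty}/\sqrt\pi$, which is strictly negative precisely under the strict inequality imposed in \cref{assum}---the same mechanism that made \cref{lemma4.1} work. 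The remaining terms in $P$, of the form $|z|^2$ and $\zeta_2(\theta_t\omega)|z|^2$, contribute only sub-exponentially growing time-integrals and pose no essential difficulty.
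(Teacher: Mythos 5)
Your setup (the difference equation for $w=v-u$, testing with $Aw$, and using the $H^2$-boundedness of $\mathcal A_0$ to control $u$) matches the paper, but the Gronwall step contains a genuine gap. You propose to run the same global-in-time scheme as \cref{lemma4.1}: extract the linear dissipation $-\nu\lambda_1\|A^{1/2}w\|^2$ from $\nu\|Aw\|^2$, apply Gronwall on $[0,t]$, and argue that the net drift has a negative ergodic mean thanks to \cref{assum}. This cannot work at the $H^1$ level. After Gagliardo--Nirenberg and Young, the trilinear terms $\bigl(B(w+hz,u),Aw\bigr)$ and $\bigl(B(v+hz,w+hz),Aw\bigr)$ produce coefficients of $\|A^{1/2}w\|^2$ of the form $C\|A^{1/2}u\|^4$ and $C\|A^{1/2}v\|^4$ (this is exactly what \ceqref{sep6.1} records), where $C$ is a large Young constant and $\|A^{1/2}v\|^4$ is controlled only by a tempered random variable, not by anything with small time-average. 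The condition $\|\nabla h\|_{L^\infty}<\sqrt\pi\,\nu\lambda_1$ is calibrated precisely for the $L^2$-level term $2|z|\,\|\nabla h\|_{L^\infty}\|v\|^2$ in \cref{lemma4.1}; it gives no control over $-\nu\lambda_1+C\|A^{1/2}u\|^4+C\|A^{1/2}v\|^4$, whose time-average is in general positive. Consequently your Gronwall factor $e^{\int_0^t P(\theta_{s-t}\omega)\,\d s}$ grows exponentially in $t$ at an uncontrolled rate and the claimed bound does not follow; dismissing these contributions as ``sub-exponentially growing time-integrals'' overlooks that they sit in the exponent, not outside it.

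The paper closes the argument differently: it integrates the differential inequality \ceqref{sep6.1} only over a window $[s,t]$ with $s\in[t-1,t]$, averages over $s$, and absorbs the resulting initial term via the pointwise bound $\|A^{1/2}w(s)\|^2\le\|A^{1/2}v(s)\|^4+C$ (valid because $u$ stays in $\mathcal A_0$). The exponential factor over the unit window is then controlled by the key estimate \ceqref{mar8.3}, namely $\sup_{\eta\in[t-2,t]}\|A^{1/2}v(\eta,\theta_{-t}\omega,v(0))\|^2\le\zeta_3(\omega)$ for $t\ge T_{\mathfrak B}(\omega)$, which is itself derived from the integrated $L^2$ bound \ceqref{4.29} by a separate double-integration argument and uses that $v(0)$ lies in the absorbing set $\mathfrak B(\theta_{-t}\omega)$. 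To repair your proof you would need to replace the global Gronwall by this windowed version and supply the uniform-in-$\eta$ tempered bound on $\|A^{1/2}v\|$ over $[t-2,t]$; without that ingredient the argument does not close.
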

\begin{proof}
Let  $w=v-u$ be the difference between the two solutions. Then it satisfies
\begin{align}\label{4.8}
\frac{\d w}{\d t}+ \nu  Aw+B( v+hz(\theta_t\omega))-B(u) = hz(\theta_t\omega) - \nu Ahz(\theta_t\omega).
\end{align}
Multiplying   \ceqref{4.8} by $Aw$ and integrating over $\mathbb T^2$, by integration by parts we have
\begin{align}\label{4.11}
 \frac12\frac{\d}{\d t}\|A^{\frac12}w\|^2+ \nu \|Aw\|^2
&= \big (hz(\theta_t\omega)-\nu Ahz(\theta_t\omega) ,Aw \big )  - \big(B(v+hz(\theta_t\omega))-B(u),Aw\big) \nonumber \\
&= \big  (hz(\theta_t\omega)-\nu Ahz(\theta_t\omega) ,Aw \big ) - \big (B(w+hz(\theta_t\omega),u),Aw \big ) \nonumber\\
&\quad -  \big (B(v+hz(\theta_t\omega),w+hz(\theta_t\omega)),Aw  \big ) \nonumber \\
&=: I_{1}(t) + I_{2}(t)+I_{3}(t) .
\end{align}
Then applying H\"{o}lder's inequality, the Gagliardo-Nirenberg inequality,  Poincar\'{e}'s inequality and Young's inequality, we obtain the estimates
\begin{align}\label{4.10}
 I_{1}(t)   &\leq \nu\|Ahz(\theta_t\omega)\|\|Aw\|+\|hz(\theta_t\omega)\|\|Aw\|\nonumber\\
&\leq \frac  \nu 4\|Aw\|^2+C|z(\theta_t\omega)|^2,
\end{align}
\begin{align}
I_{2}(t)&\leq \|w+hz(\theta_t\omega)\|_{L^\infty}\|A^{\frac12}u\|\|Aw\|\nonumber\\
&\leq C\|A^{\frac12}u\|\|Aw\| \left(\|Aw\|^{\frac12}+\|Ahz(\theta_t\omega)\|^{\frac12} \right) \left
(\|A^{\frac12}w\|^{\frac12}+\|A^{\frac12}hz(\theta_t\omega)\|^{\frac12} \right )\nonumber\\
&\leq C\|A^{\frac12}u\|\|Aw\|^{\frac32}\|A^{\frac12}w\|^{\frac12}
+C\|A^{\frac12}u\|\|Aw\|^{\frac32}|z(\theta_t\omega)|^{\frac12}\nonumber\\
&\quad +C\|A^{\frac12}u\|\|Aw\|\|A^{\frac12}w\|^{\frac12}|z(\theta_t\omega)|^{\frac12}+
C\|A^{\frac12}u\|\|Aw\||z(\theta_t\omega)|\nonumber\\
&\leq \frac  \nu 8\|Aw\|^2+C\|A^{\frac12}u\|^4\|A^{\frac12}w\|^2
+C \left (\|A^{\frac12}u\|^2+\|A^{\frac12}u\|^4 \right) |z(\theta_t\omega)|^2,
\end{align}
and
\begin{align}
I_{3}(t)&\leq \|v+hz(\theta_t\omega)\|_{L^4}\|A^{\frac12}(w+hz(\theta_t\omega))\|_{L^4}\|Aw\|\nonumber\\
&\leq C \left (\|v\|_{L^4}+\|hz(\theta_t\omega)\|_{L^4} \right ) \left(\|A^{\frac12}w\|_{L^4}+\|A^{\frac12}hz(\theta_t\omega)\|_{L^4} \right)\|Aw\|\nonumber\\
&\leq C\|A^{\frac12}v\|\|A^{\frac12}w\|^{\frac12}\|Aw\|^{\frac32}
+C\|A^{\frac12}hz(\theta_t\omega)\|\|A^{\frac12}w\|^{\frac12}\|Aw\|^{\frac32}\nonumber\\
&\quad +C\|A^{\frac12}v\|\|Ahz(\theta_t\omega)\|\|Aw\|+C\|A^{\frac12}hz(\theta_t\omega)\|\|Ahz(\theta_t\omega)\|\|Aw\|
\nonumber\\
&\leq \frac \nu 8\|Aw\|^2+C \! \left (\|A^{\frac12}v\|^4+|z(\theta_t\omega)|^4 \right )\|A^{\frac12}w\|^2
+C \! \left (\|A^{\frac12}v\|^4+|z(\theta_t\omega)|^4 \right).
\nonumber
\end{align}
 Since $u$ is a trajectory within the global attractor $\mathcal{A}_0$, we have $\|A^{\frac12}u(t)\|\leq C$ for all $t\geq0$, so
\begin{align}\label{4.9}
I_{2}(t)
\leq \frac \nu 8\|Aw\|^2+C\|A^{\frac12}w\|^2+C|z(\theta_t\omega)|^2.
\end{align}
Substituting  \ceqref{4.10}- \ceqref{4.9} into  \ceqref{4.11} we have
\begin{align}
\frac{\d}{\d t}\|A^{\frac12}w\|^2+  \nu \|Aw\|^2
&\leq C \left (1+\|A^{\frac12}v\|^4+|z(\theta_t\omega)|^4 \right )\|A^{\frac12}w\|^2  \nonumber \\
&\quad
+C \left(1+\|A^{\frac12}v\|^4+|z(\theta_t\omega)|^4\right) .
\label{sep6.1}
\end{align}

For $t >1$ and $s\in[t-1,t]$, applying Gronwall's lemma to  \ceqref{sep6.1} we obtain
\begin{align}
\|A^{\frac12}w(t)\|^2&\leq e^{\int_s^tC(1+\|A^{\frac12}v(\eta) \|^4+|z(\theta_\eta\omega)|^4)\, \d \eta}\|A^{\frac12}w(s)\|^2\nonumber\\
&
 \quad +C\int_s^te^{\int_\tau^tC(1+\|A^{\frac12}v(\eta) \|^4+|z(\theta_\eta\omega)|^4)\, \d \eta} \left(1+\|A^{\frac12}v(\tau)\|^4+|z(\theta_\tau\omega)|^4 \right) \d \tau\nonumber\\
&\leq Ce^{\int_s^tC(\|A^{\frac12}v(\eta) \|^4+|z(\theta_\eta\omega)|^4)\, \d \eta} \nonumber  \\
&\quad \times
 \left[\|A^{\frac12}w(s)\|^2
+\int_{t-1}^t    \left(1+\|A^{\frac12}v\|^4+|z(\theta_\tau\omega)|^4\right) \d \tau \right]. \nonumber
\end{align}
Integrating w.r.t$.$  $s$ over $[t-1,t]$ yields
\begin{align*}
\|A^{\frac12}w(t)\|^2
&\leq Ce^{\int_{t-1}^tC(\|A^{\frac12}v(\eta)\|^4+|z(\theta_\eta\omega)|^4)\, \d \eta}\nonumber\\
&\quad \times \left[\int_{t-1}^t\|A^{\frac12}w(s)\|^2\, \d s
+\int_{t-1}^t  \left(1+\|A^{\frac12}v\|^4+|z(\theta_\tau \omega)|^4\right) \d \tau\right]. \nonumber
\end{align*}
   Since $ u$ is a trajectory within the global attractor, $\|A^{\frac 12} u\|^2$ is uniformly bounded. Therefore,
\begin{align*}
\|A^{\frac 12} w(s)\|^2
 =\|A^{\frac 12} (v-u) (s)\|^2
  \leq \|A^{\frac 12} v (s)\|^4 + C,
\end{align*}
and then
\ben
\|A^{\frac12}w(t)\|^2
 \leq Ce^{\int_{t-1}^tC(\|A^{\frac12}v(\eta)\|^4+|z(\theta_\eta\omega)|^4)\, \d \eta}  \int_{t-1}^t  \left(1+\|A^{\frac12}v(\tau)\|^4+|z(\theta_\tau\omega)|^4 \right)\d \tau .
\ee
Replacing $\omega$ with $\theta_{-t}\omega$, for $t>1$  we deduce that
\begin{align}\label{4.14}
  \|A^{\frac12}w(t,\theta_{-t}\omega,w(0))\|^2
&\leq Ce^{\int_{t-1}^tC \|A^{\frac12}v(\eta,\theta_{-t}\omega,v(0))\|^4 \d \eta+  C\int_{-1}^0|z(\theta_{\eta}\omega)|^4 \, \d \eta}  \nonumber\\
 &\quad
 \times \left(\int_{t-1}^t  \|A^{\frac12}v(s,\theta_{-t}\omega,v(0))\|^4 \, \d s + \int_{-1}^0  |z(\theta_{s}\omega)|^4 \, \d s +1 \right) .
\end{align}

Recall from  \ceqref{4.27} that
\[
\frac{\d}{\d t}\|A^{\frac12}v\|^2 + \nu \|Av\|^2
 \leq  C\left ( 1 +  |z(\theta_t\omega)|^2 \right)  \|A^{\frac12}v\|^2  +
C \left(1 +|z(\theta_t\omega)|^6  \right).
\]
Then for $t>3$  integrating both sides over $(\tau, t)$ with $\tau\in (t-3,t-2)$ we obtain
\ben
&
\|A^{\frac12}v (t) \|^2 -\|A^{\frac12}v(\tau) \|^2  + \nu \int^t_{t-2} \|A v(s)\|^2 \, \d s \\
&\quad
 \leq  C \int^t_{t-3} \left(1+  |z(\theta_s\omega)|^2 \right) \|A^{\frac12}v(s) \|^2\, \d s
+ C\int^t_{t-3} \left(1 +|z(\theta_s\omega)|^6 \right )  \d s,
\ee
and then integrating over $\tau\in (t-3,t-2)$ yields
\ben
\|A^{\frac12}v (t) \|^2+  \nu \int^t_{t-2} \|A v(s)\|^2\, \d s
& \leq   C \int^t_{t-3} \left (|z(\theta_s\omega)|^2+1\right)  \|A^{\frac12}v(s) \|^2 \, \d s \\
&\quad + C\int^t_{t-3} \left (1 +|z(\theta_s\omega)|^6 \right)  \d s ,\quad t> 3.
\ee
For any $\varepsilon\in  [0,2]$, replacing $\omega$ with $\theta_{-t-\varepsilon} \omega$  we then have
\begin{align}
 & \|A^{\frac12}v (t,\theta_{-t-\varepsilon} \omega, v(0)) \|^2 +  \nu \int_{t-2}^t  \|Av (s,\theta_{-t-\varepsilon} \omega, v(0)) \|^2 \, \d s \nonumber \\
& \quad \leq   C \int^t_{t-3} \! \left (|z(\theta_{s-t-\varepsilon} \omega)|^2+1\right)  \|A^{\frac12}v(s,\theta_{-t-\varepsilon} \omega, v(0)) \|^2 \, \d s + C\int^{-\varepsilon}_{-\varepsilon-3} \! \left(1 +|z(\theta_s\omega)|^6 \right )  \d s\nonumber\\
& \quad \leq   C\left (  \sup_{\tau\in (-5,0)}|z(\theta_\tau\omega)|^2+1\right) \int^{t+\varepsilon}_{t+\varepsilon-5}  \|A^{\frac12}v(s,\theta_{-t-\varepsilon} \omega, v(0)) \|^2 \, \d s \nonumber\\
&\qquad + C\int^{0}_{-5}    \left(1 +|z(\theta_s\omega)|^6 \right)   \d s ,\quad t>5 . \label{mar8.1}
\end{align}
By  \ceqref{4.29},
$$
  \frac {\alpha  \nu }2\int_0^te^{ \left( \frac 8 \alpha -3  \right) \lambda (s-t)}\|A^{\frac12}v(s, \theta_{-t}\omega, v(0))\|^2\, \d s
 \leq e^{-\lambda t}\|v(0)\|^2
 + \zeta_1(\omega) ,\quad t\geq T_1(\omega) ,  $$
where $\zeta_1(\omega)\geq 1$ is a tempered random variable,
so we obtain
 \ben
  \int^{t+\varepsilon}_{t+\varepsilon-5}  \|A^{\frac12}v(s,\theta_{-(t+\varepsilon)} \omega, v(0)) \|^2 \, \d s
  &\leq  C \int^{t+\varepsilon}_{t+\varepsilon-5}  e^{ \left( \frac 8 \alpha -3  \right) \lambda (s-t)}\|A^{\frac12}v(s,\theta_{-t-\varepsilon} \omega, v(0)) \|^2 \, \d s \\[0.8ex]
  & \leq Ce^{-\lambda t}\|v(0)\|^2
 + C \zeta_1(\omega) ,\quad t\geq T_1(\omega)+5 ,
 \ee
uniformly for $\varepsilon \in [0,2]$.  Therefore, coming back to   \ceqref{mar8.1} we obtain
\begin{align}
 &\sup_{\varepsilon \in [0,2]} \left(\|A^{\frac12}v (t,\theta_{-t-\varepsilon} \omega, v(0)) \|^2  +  \nu \int_{t-2}^t  \|Av (s,\theta_{-t-\varepsilon} \omega, v(0)) \|^2 \, \d s\right ) \nonumber \\
&\quad \leq   C\left (  \sup_{\tau\in (-5,0)}|z(\theta_\tau\omega)|^2+1\right)\left(e^{-\lambda t}\|v(0)\|^2
 +   \zeta_1(\omega)   \right) + C\int^{0}_{-5} \left (1 +|z(\theta_s\omega)|^6 \right )    \d s \nonumber \\
 & \quad \leq   C\left (  \sup_{\tau\in (-5,0)}|z(\theta_\tau\omega)|^6+1\right)\left(e^{-\lambda t}\|v(0)\|^2
 +   \zeta_1(\omega)   \right)  , \quad t\geq T_1(\omega)+5.  \nonumber
\end{align}
 In other words, this means that, for   $t\geq T_1(\omega)+7$ and $\eta\in [t-2,t ]$ $($so that $\eta\geq T_1(\omega)+5),$
\ben
& \|A^{\frac12}v (\eta ,\theta_{-t } \omega, v(0)) \|^2 + \nu \int_{\eta-2}^\eta  \|Av (s,\theta_{-t} \omega, v(0)) \|^2 \, \d s\\
  & \quad =  \|A^{\frac12}v (\eta ,\theta_{-\eta-(t-\eta) } \omega, v(0)) \|^2
  + \nu  \int_{\eta-2}^\eta  \|Av (s,\theta_{-\eta-(t-\eta)} \omega, v(0)) \|^2 \, \d s \\
&\quad \leq    C\left (  \sup_{\tau\in (-5,0)}|z(\theta_\tau\omega)|^6+1\right)\left(e^{-\lambda  \eta}\|v(0)\|^2
 +   \zeta_1(\omega)   \right)
  \\
&\quad  \leq   C\left (  \sup_{\tau\in (-5,0)}|z(\theta_\tau\omega)|^6+1\right)\left(e^{-\lambda t}\|v(0)\|^2
 +   \zeta_1(\omega)   \right) ,
\ee
where in the last inequality we have used $\eta\geq t-2$.
Notice that the initial value $v(0)$ comes from within the absorbing set $\mathfrak B$, so there is a random variable $T_{\mathfrak B}(\omega) \geq T_1(\omega)+ 7$
such that
\be \label{timeB}
 \sup_{v(0)\in \mathfrak B(\theta_{-t} \omega)}  e^{-\lambda t} \|v(0)\|^2 \leq 1,\quad t\geq T_{\mathfrak B}(\omega).
\ee
Therefore, from the above estimate it follows  the crucial estimate
\begin{align}
 &\sup_{\eta\in [t-2,t]}
  \left(
 \|A^{\frac12}v (\eta ,\theta_{-t } \omega, v(0)) \|^2
 + \nu \int_{\eta-2}^\eta  \|Av (s,\theta_{-t} \omega, v(0)) \|^2 \, \d s\right) \notag  \\
& \quad \leq  C\left (  \sup_{\tau\in (-5,0)}|z(\theta_\tau\omega)|^6+1\right)\big(1
 +   \zeta_1(\omega)   \big)
 =:  \zeta_3(\omega) ,\quad t\geq T_{\mathfrak B} (\omega) ,\label{mar8.3}
\end{align}
where  by   $ \zeta_3 $ we defined a tempered random variable.
An immediate  consequence is the  estimate
\be \label{mar8.2}
  \int_{t-1}^t\|A^{\frac12}v(\eta,\theta_{-t}\omega,v(0))\|^4 \, \d \eta
&  \leq    |\zeta_3(\omega)|^2 ,  \quad t\geq T_{\mathfrak B} (\omega) .
\ee

  Substituting  \ceqref{mar8.2} into  \ceqref{4.14}, we obtain
\begin{align}  \label{mar8.4}
\|A^{\frac12}w(t,\theta_{-t}\omega,w(0))\|^2
&\leq Ce^{ |\zeta_3 (\omega)|^2+ C\int_{-1}^0|z(\theta_{\eta}\omega)|^4   \d \eta}  \left( |\zeta_3(\omega)|^2 + \int_{-1}^0 |z(\theta_{s}\omega)|^4 \,  \d s  \right) \nonumber \\
&\leq Ce^{ 2|\zeta_3 (\omega)|^2+ C\int_{-1}^0|z(\theta_{\eta}\omega)|^4   \d \eta}
=: \zeta_4(\omega) , \quad t\geq T_{\mathfrak B} (\omega) .
\nonumber
\end{align}
Then we have the lemma.
\end{proof}

\begin{lemma}[$H^2$-distance] \label{lem:H2bd}  Let  \cref{assum} hold and $f\in H$.
Then there is a tempered random variable $\zeta_5(\cdot)$ such that the solutions $v$ of the random equations  \ceqref{2.2} and $u$  of the deterministic equations   \ceqref{2.1} satisfy
\[
 \|Av(t,\theta_{-t}\omega,v(0))-Au(t,u(0))\|^2\leq \zeta_5(\omega),
\quad t\geq T_{\mathfrak B}(\omega),
\]
for  $v(0)\in\mathfrak{B}(\theta_{-t}\omega)$
 and $u(0)\in\mathcal{A}_0$,
where $T_{\mathfrak B}$ is the random variable given in   \cref{lemma4.3}.
\end{lemma}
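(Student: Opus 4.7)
The strategy is to mimic the proof of the preceding $H^1$-distance Lemma~\ref{lemma4.3}, one level higher in regularity. Starting from the equation for the difference $w = v - u$:
\begin{align*}
\frac{\d w}{\d t}+\nu Aw+B(v+hz(\theta_t\omega))-B(u)=hz(\theta_t\omega)-\nu Ahz(\theta_t\omega),
\end{align*}
I would take the inner product with $A^2 w$ and integrate over $\mathbb T^2$. Using integration by parts (which is clean under periodic boundary conditions), this yields the energy identity
\begin{align*}
\frac12\frac{\d}{\d t}\|Aw\|^2 + \nu\|A^{\frac32}w\|^2
  = -\bigl(B(v+hz)-B(u),A^2 w\bigr) + \bigl(hz-\nu Ahz,A^2 w\bigr).
\end{align*}

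The forcing-type terms are handled directly: $(hz,A^2 w) = (Ah,Aw)\,z$ requires only $h\in H^2$, while $(\nu Ahz,A^2 w) = \nu(A^{\frac32}h,A^{\frac32}w)\,z$ is controlled using $h\in H^3$ from Assumption~\ref{assum}, giving a term $\leq \frac{\nu}{8}\|A^{\frac32}w\|^2 + C|z(\theta_t\omega)|^2$ after Young's inequality. For the nonlinearity I would decompose
\[
B(v+hz)-B(u) = B(w+hz,\,v+hz) + B(u,\,w+hz),
\]
and estimate each piece by transferring $A^{\frac12}$: $|(B(f,g),A^2 w)| \leq \|B(f,g)\|_{H^1}\|A^{\frac32}w\|$, where $\|B(f,g)\|_{H^1}$ is controlled via Gagliardo--Nirenberg-type product estimates in 2D. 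After absorbing $\|A^{\frac32}w\|^2$ repeatedly into the LHS via Young's inequality, I expect a differential inequality of the form
\begin{align*}
\frac{\d}{\d t}\|Aw\|^2 + \nu\|A^{\frac32}w\|^2
 \leq C\bigl(1+\|Av\|^2+\|A^{\frac12}v\|^4+|z(\theta_t\omega)|^k\bigr)\|Aw\|^2 + C\bigl(1+\|Av\|^2+\|A^{\frac12}v\|^4+|z(\theta_t\omega)|^k\bigr),
\end{align*}
with the coefficient of $\|Aw\|^2$ and the additive term both locally integrable in $t$ thanks to Lemma~\ref{lem:H1bound} and the estimate \eqref{mar8.3}, rather than pointwise bounded.

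I would then apply Gronwall's lemma on $(s,t)$ with $s\in[t-1,t]$ and integrate once more over $s\in[t-1,t]$, exactly as in the proof of Lemma~\ref{lemma4.3}. This yields
\begin{align*}
\|Aw(t)\|^2 \leq C\exp\!\Bigl(\!\int_{t-1}^t C\bigl(1+\|Av\|^2+\|A^{\frac12}v\|^4+|z(\theta_\eta\omega)|^k\bigr)\d\eta\Bigr)\int_{t-1}^t \bigl(\|A^{\frac12}w\|^2+\text{data}\bigr)\d s.
\end{align*}
Replacing $\omega$ by $\theta_{-t}\omega$, the integrals $\int_{t-1}^t\|Av(s,\theta_{-t}\omega,v(0))\|^2\,\d s$ and $\int_{t-1}^t\|A^{\frac12}v\|^4\,\d s$ are bounded by $\zeta_3(\omega)$ via \eqref{mar8.3}, the $H^1$-distance bound $\|A^{\frac12}w\|^2\leq\zeta_4(\omega)$ from Lemma~\ref{lemma4.3} handles the $w$-contribution in the last integral, and $\int_{-1}^0|z(\theta_s\omega)|^k\,\d s$ is a tempered random variable. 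Defining
\[
\zeta_5(\omega) := C\exp\bigl(C\zeta_3(\omega)+C\textstyle\int_{-1}^0|z(\theta_\eta\omega)|^k\d\eta\bigr)\bigl(\zeta_3(\omega)+\zeta_4(\omega)+\textstyle\int_{-1}^0|z(\theta_s\omega)|^k\d s+1\bigr)
\]
then gives a tempered random variable that dominates $\|Aw(t,\theta_{-t}\omega,w(0))\|^2$ for $t\geq T_{\mathfrak B}(\omega)$.

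The main technical obstacle I anticipate is bookkeeping the nonlinear bilinear terms, specifically finding a decomposition and choice of Gagliardo--Nirenberg interpolation so that every uncontrolled factor appearing is either $\|A^{\frac32}w\|$ (absorbable into the LHS), $\|A^{\frac12}w\|$ (controlled pointwise by $\zeta_4$), $\|Av\|$ (only integrable, hence fed into the exponential Gronwall factor), or $\|A^{\frac12}v\|$ (pointwise bounded by $\zeta_2$). The balance is delicate because $v$ is only $H^2$ in an integrated sense, while $w = v-u$ must be estimated pointwise in $H^2$; however, since $u\in\mathcal A_0$ is pointwise $H^2$-bounded, the potentially dangerous terms involving $\|Au\|$ are automatically harmless, and the remaining nonlinear contributions are the same shape already handled in Lemma~\ref{lemma4.3} with one extra derivative.
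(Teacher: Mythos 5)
Your proposal follows essentially the same route as the paper: multiply the $w$-equation by $A^2w$, estimate the trilinear terms by Gagliardo--Nirenberg so that every $\|A^{\frac32}w\|$ is absorbed into the left-hand side, run the double Gronwall argument over a unit time window, and control the resulting time-integrals via \ceqref{mar8.3} and the $H^2$-boundedness of $\mathcal A_0$. The differences are cosmetic: you split the nonlinearity as $B(w+hz,v+hz)+B(u,w+hz)$ where the paper uses $B(v+hz,w+hz)+B(w+hz,u)$ (both algebraically valid), and you allow $\|Av\|^2$ in the Gronwall exponent, which is harmless since $\int_{t-1}^t\|Av\|^2\,\d s$ is bounded by \ceqref{mar8.3}, whereas the paper rearranges via $\|Au_h\|\leq\|Aw\|+C+\dots$ so that only $\|A^{\frac12}v\|^4+|z|^4$ appears there.

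One bookkeeping correction: after averaging the Gronwall bound over $s\in[t-1,t]$, the term that survives is $\int_{t-1}^t\|Aw(s)\|^2\,\d s$, not $\int_{t-1}^t\|A^{\frac12}w(s)\|^2\,\d s$ as in your displayed formula, so the pointwise $H^1$-distance bound $\zeta_4$ from \cref{lemma4.3} is not the right tool there. The fix is the one the paper uses in \ceqref{mar8.7}: write $\int_{t-1}^t\|Aw\|^2\leq 2\int_{t-1}^t\|Av\|^2+2\int_{t-1}^t\|Au\|^2\leq \frac 2\nu\zeta_3(\omega)+C$, again by \ceqref{mar8.3} and the $H^2$-boundedness of the deterministic attractor --- i.e., exactly the ingredients you already invoke elsewhere, so the gap closes immediately.
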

\begin{proof}

Multiplying   \ceqref{4.8} by $A^2w$ and integrating over $\mathbb T^2$, by integration by parts we have
\begin{align}\label{4.17}
 \frac12\frac{\d}{\d t}\|Aw\|^2+  \nu  \|A^{\frac32}w\|^2
&= \left(-\nu Ahz(\theta_t\omega) +hz(\theta_t\omega),A^2w \right) \nonumber \\
&\quad -\left(B(v+hz(\theta_t\omega)) -B(u),A^2w \right)\nonumber\\
&=  \left(-\nu Ahz(\theta_t\omega) +hz(\theta_t\omega),A^2w \right)\nonumber\\
&\quad -\left(B(v+hz(\theta_t\omega),w+hz(\theta_t\omega)),A^2w \right )\nonumber\\
&\quad
-\left(B(w+hz(\theta_t\omega),u),A^2w \right )\nonumber\\
&=: I_{4}(t)+I_{5}(t)+I_{6}(t).
\end{align}
Now we estimate  \ceqref{4.17} term by term to obtain  \ceqref{sep6.3}.
Applying H\"{o}lder's inequality, Young's inequality and the Poincar\'e inequality  \ceqref{poin},  we obtain, since $h\in H^3$,
\begin{align}
I_{4}(t)&\leq  \nu \|A^{\frac32}hz(\theta_t\omega)\|\|A^{\frac32}w\|+ \|A^{\frac12}hz(\theta_t\omega)\|\|A^{\frac32}w\|\nonumber\\
&\leq \frac{ \nu }{8}\|A^{\frac32}w\|^2+C  |z(\theta_t\omega)|^2. \label{4.18}
\end{align}
Applying H\"{o}lder's inequality and the Gagliardo-Nirenberg inequality, we obtain
\begin{align}
I_{5}(t)&\leq\|A^\frac12u_h\|_{L^4}\|\nabla(w+hz(\theta_t\omega))\|_{L^4}\|A^\frac32w\|
+\|u_h\|_{L^4}\|A(w+hz(\theta_t\omega))\|_{L^4}\|A^\frac32w\|
\nonumber\\
&\leq C\|A^\frac12u_h\|^\frac12\|Au_h\|^\frac12
\|\nabla(w+hz(\theta_t\omega))\|^\frac12\|A(w+hz(\theta_t\omega))\|^\frac12
\|A^\frac32w\|\nonumber\\
& \quad +C\|u_h\|^\frac12\|A^{\frac12}u_h\|^\frac12
\|A(w+hz(\theta_t\omega))\|^\frac12\|A^{\frac32}(w+hz(\theta_t\omega))\|^\frac12\|A^\frac32w\|\nonumber\\
&\leq  C\|A^\frac12u_h\|^\frac12\|Au_h\|^\frac12
\left(\|A^{\frac12}w\|^\frac12+ |z(\theta_t\omega)|^\frac12 \right) \left (\|Aw\|^\frac12+ |z(\theta_t\omega) |^\frac12 \right)
\|A^\frac32w\|\nonumber\\
&\quad +C\|A^{\frac12}u_h\| \left (\|Aw\|^\frac12+ |z(\theta_t\omega)|^\frac12\right)\left(\|A^{\frac32}w\|^\frac12
+ | z(\theta_t\omega)|^\frac12 \right)\|A^\frac32w\|\nonumber\\
&=: J_1(t) +J_2(t) . \nonumber
\end{align}

Since $\mathcal{A}_0$ is bounded in $H^2$,  $\|Au(t) \|^2+\|A^{1/2}u(t) \|^2\leq C$ for any $t\geq 0$, so
\begin{align}\label{4.15}
\|Au_h\|^2&\leq \|Aw\|^2+\|Au\|^2
 +  \|Ahz(\theta_t\omega)\|^2 \nonumber \\
&
\leq \|Aw\|^2+  C  +C \|Ahz(\theta_t\omega)\|^2
,
\end{align}
and similarly
\begin{align}  \label{mar5.3}
  \|A^{\frac 12}w\|^2
   &\leq  \|A^{\frac 12} v\|^2 +C
   \nonumber  \\
   &\leq \|A^{\frac 12} u_h\|^2 + \|A^{\frac 12} hz(\theta_t\omega) \|^2 +C .
\end{align}
Therefore,  for  the term $J_1(t)$, from the inequalities
\be
  &C\|A^\frac12u_h\|^\frac12\|Au_h\|^\frac12\|A^\frac12w\|^\frac12
\|Aw\|^\frac12\|A^\frac32w\|
& \nonumber\\
&\quad  \leq    \frac \nu {32}\|A^\frac32w\|^2  + C \|A^\frac12u_h\|  \|A u_h\|   \|A^{\frac 12} w\|  \|Aw\|
\nonumber
\\
&\quad  \leq   \frac \nu {32}\|A^\frac32w\|^2
+ C \|A^\frac12u_h\|^2  \|A u_h\|^2+      \|A^{\frac 12} w\|^2 \|Aw\|^2
\nonumber
\\
&\quad  \leq    \frac \nu {32} \|A^\frac32w\|^2
+ C\left( \|A^\frac12u_h\|^2 +  \|A^\frac 12 w   \|^2  \right) \|A w\|^ 2
+ C  \|A^\frac12u_h\|^2   \left(  1+ |z(\theta_t\omega) |^2  \right) ,
\ee
\ben
  &C\|A^\frac12u_h\|^\frac12\|Au_h\|^\frac12\|A^\frac12w\|^\frac12
 |z(\theta_t\omega)|^\frac12\|A^\frac32w\|
& \nonumber\\
&\quad  \leq   \frac \nu {32}\|A^\frac32w\|^2  + C \|A^\frac12u_h\|  \|A u_h\|   \|A^{\frac 12} w\| |z(\theta_t\omega) |
\nonumber
\\
& \quad  \leq    \frac \nu {32}\|A^\frac32w\|^2
+ C \|A^\frac12u_h\|^2  \|A u_h\|^2+   C   \|A^{\frac 12} w\|^2 |z(\theta_t\omega) |^2
\nonumber
\\
&\quad \leq      \frac \nu {32}\|A^\frac32w\|^2
+ C  \|A^\frac12u_h\|^2  \|A w\|^2  + C \big(  \|A^\frac12u_h\|^2 +|z(\theta_t\omega) |^2 +1\big) \big(  1+|z(\theta_t\omega) |^2  \big)  ,
\ee

\be
  &C\|A^\frac12u_h\|^\frac12\|Au_h\|^\frac12 |z(\theta_t\omega)|^\frac12
\|Aw\|^\frac12\|A^\frac32w\|
& \nonumber\\
&\quad  \leq    \frac \nu {32} \|A^\frac32w\|^2  + C \|A^\frac12u_h\|  \|A u_h\|  |z(\theta_t\omega) |  \|Aw\|
\nonumber
\\
&\quad \leq     \frac \nu {32}\|A^\frac32w\|^2
+ C \|A^\frac12u_h\|^2  \|A u_h\|^2+    C| z(\theta_t\omega)|^2 \|Aw\|^2
\nonumber
\\
&\quad \leq     \frac \nu {32}\|A^\frac32w\|^2
+ C\left( \|A^\frac12u_h\|^2 +  |z(\theta_t\omega)|^2  \right) \|A w\|^2
+ C  \|A^\frac12u_h\|^2   \left(  1+ |z(\theta_t\omega) |^2  \right) ,
\ee
and
\be
  &C\|A^\frac12u_h\|^\frac12\|Au_h\|^\frac12 | z(\theta_t\omega)|  \|A^\frac32w\|
& \nonumber\\
&\quad \leq     \frac \nu {32}\|A^\frac32w\|^2  + C \|A^\frac12u_h\| \|A u_h\|   |z(\theta_t\omega)|^2
\nonumber
\\
&\quad \leq    \frac \nu {32}\|A^\frac32w\|^2
+ C \|A^\frac12u_h\|^2  \|A u_h\|^2   + C|z(\theta_t\omega) |^4
\nonumber
\\
&\quad \leq   \frac \nu {32}\|A^\frac32w\|^2
+ C  \|A^\frac12u_h\|^2     \left(  \|A w\|^2  +1+|z(\theta_t\omega) |^2  \right)
+ C|z(\theta_t\omega) |^4 ,
\ee
it follows that
\begin{align}
J_1(t)
&\leq \frac  \nu 8\|A^\frac32w\|^2
+C\left(\|A^\frac12u_h\|^2+\|A^\frac12w\|^2+|z(\theta_t\omega)|^2 \right)
\|Aw\|^2\nonumber\\
&\quad +C\left(1+\|A^\frac12u_h\|^4 \right)+C   |z(\theta_t\omega)|^4 .
\nonumber
\end{align}

For   the term $J_2(t)$, by  \ceqref{4.15} and the Gagliardo-Nirenberg inequality and the Young inequality we obtain
\begin{align}
J_2(t)
&\leq C\|A^{\frac12}u_h\|\|Aw\|^\frac12\|A^{\frac32}w\|^\frac 32 +C\|A^{\frac12}u_h\|\|Aw\|^\frac12|z(\theta_t\omega)|^\frac12\|A^\frac32w\|
 \nonumber\\
&\quad+C\|A^{\frac12}u_h\| |z(\theta_t\omega)|^\frac12\|A^{\frac32}w\|^\frac 32
+C\|A^{\frac12}u_h\| |z(\theta_t\omega)|  \|A^\frac32w\|\nonumber\\
&\leq \frac \nu 8\|A^\frac32w\|^2+C\left (1+\|A^\frac12u_h\|^4\right )\|Aw\|^2+C\left (1+\|A^\frac12u_h\|^8 \right)  +C |z(\theta_t\omega)|^4, \nonumber
\end{align}
and then  for $I_{5} (t)=J_1(t)+ J_2(t)$ we have the estimate
\begin{align}
I_{5}(t) &\leq  \frac \nu 4\|A^\frac32w\|^2
+C\left(1+ \|A^\frac12u_h\|^4+\|A^\frac12w\|^2+ |z(\theta_t\omega)|^2 \right)
\|Aw\|^2 \nonumber  \\
&\quad +C\left(1+\|A^\frac12u_h\|^8 \right)
+C  |z(\theta_t\omega)|^4. \label{sep6.2}
\end{align}

For the term $I_{6}(t)$, by the H\"{o}lder inequality and the Gagliardo-Nirenberg inequality and Poincar\'{e}'s inequality and the Young inequality, we have
\begin{align}\label{4.16}
I_{6}(t)&\leq\|A^{\frac12}(w+hz(\theta_t\omega))\|_{L^4}\|\nabla u\|_{L^4}\|A^\frac32w\|+\|w+hz(\theta_t\omega)\|_{L^\infty}\|Au\|_{L^2}\|A^\frac32w\|\nonumber\\
&\leq C\|A^{\frac12}(w+hz(\theta_t\omega))\|^\frac12\|A(w+hz(\theta_t\omega))\|^\frac12\|A^\frac12u\|^\frac12
\|Au\|^\frac12\|A^\frac32w\|\nonumber\\
& \quad +C\|A^\frac12(w+hz(\theta_t\omega))\|^\frac12\|A(w+hz(\theta_t\omega))\|^\frac12\|Au\|\|A^\frac32w\|\nonumber\\
&\leq C\|A^{\frac12}(w+hz(\theta_t\omega))\|^\frac12\|A(w+hz(\theta_t\omega))\|^\frac12\|A^\frac32w\|\nonumber\\
&\leq \frac \nu 8\|A^\frac32w\|^2+C\left(\|A^{\frac12}w\|+\|A^{\frac12}hz(\theta_t\omega)\| \right) \big (\|Aw\|
+\|Ahz(\theta_t\omega)\| \big )\nonumber\\
&\leq \frac \nu 8\|A^\frac32w\|^2+C\|Aw\|^2 +C |z(\theta_t\omega)|^2.
\end{align}
Substituting  \ceqref{4.18},  \ceqref{sep6.2} and  \ceqref{4.16} into  \ceqref{4.17}, by  \ceqref{mar5.3}  we deduce that
\begin{align}
 \frac{\d}{\d t}\|Aw\|^2+  \nu  \|A^{\frac32}w\|^2
&\leq C\left (1+\|A^\frac12u_h\|^4+\|A^\frac12w\|^2+|z(\theta_t\omega)|^2 \right )
\|Aw\|^2\nonumber\\
&\quad +C \left (1+\|A^\frac12u_h\|^8+|z(\theta_t\omega)|^4 \right) \nonumber \\
 &\leq C\left (1+\|A^\frac12 v\|^4 +|z(\theta_t\omega)|^4 \right )
\|Aw\|^2  \nonumber \\
&\quad +C \left (1+\|A^\frac12 v\|^8 +|z(\theta_t\omega)|^8 \right) .  \label{sep6.3}
\end{align}

 For $t >1$ and $s\in[t-1,t]$, applying Gronwall's lemma to  \ceqref{sep6.3}   it yields
\be
\|Aw(t)\|^2&\leq e^{C\int_s^t( 1+\|A^\frac12 v(\eta)\|^4 +|z(\theta_\eta\omega)|^4)\, \d \eta}\|Aw(s)\|^2 \nonumber\\
&\quad +\int_s^t Ce^{C\int_\tau^t  (1+\|A^\frac12 v\|^4+|z(\theta_\eta\omega)|^4)\, \d \eta}
 \Big (1+\|A^\frac12 v\|^8  +|z(\theta_\tau\omega)|^8\Big ) \, \d \tau\nonumber\\
&\leq Ce^{C\int_{t-1}^t(1+\|A^\frac12 v\|^8 +|z(\theta_\eta\omega)|^8)\, \d \eta} \nonumber \\
&\quad \times \left [\|Aw(s)\|^2+\int_{t-1}^t
\left(1+\|A^\frac12 v\|^8  +|z(\theta_\tau\omega)|^8\right)\d \tau \right]\\
& \leq Ce^{C\int_{t-1}^t(1+\|A^\frac12 v\|^8 +|z(\theta_\eta\omega)|^8)\, \d \eta} \Big(\|Aw(s)\|^2+ 1 \Big)
, \nonumber
\ee
where in the last inequality we have used the basic relation $x\leq e^x$ for $x\geq 0$.
Integrating w.r.t. $s$ over   $  [t-1,t] $ gives \begin{align}
\|Aw(t)\|^2
&\leq Ce^{C\int_{t-1}^t  (1+\|A^\frac12 v(\eta)\|^8+|z(\theta_\eta\omega)|^8)\, \d \eta}
 \left (\int_{t-1}^t\|Aw(s)\|^2\, \d s + 1\right). \nonumber
\end{align}
    Replacing $\omega$ with $\theta_{-t}\omega$ we obtain
\begin{align}\label{4.22}
\|Aw(t,\theta_{-t}\omega,w(0))\|^2
&\leq Ce^{C\int_{t-1}^t \|A^\frac12 v(\eta,\theta_{-t}\omega, v(0))\|^8 \d \eta + C\int^0_{-1} |z(\theta_{\eta}\omega) |^8
\, \d \eta}\nonumber\\
&\quad \times \left(\int_{t-1}^t\|Aw(s,\theta_{-t}\omega,w(0))\|^2\, \d s +1  \right) .
\end{align}
Note that    by  \ceqref{mar8.3}
it follows
\begin{align} \label{4.23}
  \int_{t-1}^t  \|A^\frac12 v(s,\theta_{-t}\omega,v(0))\|^8\, \d s \leq  |\zeta_3(\omega)|^4   ,  \quad t\geq T_{\mathfrak B}(\omega).
\end{align}
In addition, since $u$ is a trajectory within the global attractor $\mathcal A_0$ which is   bounded  in $H^2$,  by  \ceqref{mar8.3}  again we have
\begin{align}
\int_{t-1}^t\|Aw(s,\theta_{-t}\omega,w(0))\|^2\, \d s
&\leq2\int_{t-1}^t\|Av(s,\theta_{-t}\omega,v(0))\|^2\, \d s+2\int_{t-1}^t\|Au\|^2\, \d s\nonumber\\
&\leq  2\int_{t-1}^t \|Av(s,\theta_{-t}\omega,v(0))\|^2\, \d s  +  2\|\mathcal A \|_{H^2} \nonumber\\
&\leq \frac 2  \nu \, \zeta_3(\omega) +C ,  \quad t\geq T_{\mathfrak B}(\omega).  \label{mar8.7}
\end{align}
Therefore, substituting  \ceqref{4.23} and  \ceqref{mar8.7} into  \ceqref{4.22} we have
\begin{align}
\|Aw(t,\theta_{-t}\omega,w(0))\|^2
&\leq C e^{C|\zeta_3(\omega) |^4 + C\int^0_{-1} |z(\theta_{\eta}\omega) |^8
\, \d \eta} \left(   \zeta_3(\omega) +1
\right)  \nonumber \\
&=:\zeta_5(\omega),  \quad t\geq T_{\mathfrak B}(\omega).  \nonumber
\end{align}
The proof is complete.
\end{proof}

\subsection{$H^2$ random absorbing sets}
By  \cref{lem:H2bd} we are now able to construct an $H^2$ random absorbing set for the  random NS equations  \ceqref{2.2}. Nevertheless, for   later purpose we  make  the following  stronger estimate than  \cref{lem:H2bd}.
It will be crucial in the next section in deriving the local $(H,H^2)$-Lipschitz continuity of the equations.
\begin{lemma}\label{lemma4.4}
 Let  \cref{assum} hold and $f\in H$. Then
there is a tempered random variable $\rho(\cdot)$ such that  the solutions $v$ of the random NS equations  \ceqref{2.2} and the solutions $u$ of the deterministic equations  \ceqref{2} satisfy
\[
  \sup_{\varepsilon\in [0,1]} \|Av(t,\theta_{-t-\varepsilon}\omega,  v(0))- Au(t,u_0)\|^2     \leq  \rho(\omega) ,\quad t\geq T_{\mathfrak B} (\omega),
\]
whenever $v(0)\in \mathfrak B(\theta_{-t-\varepsilon} \omega)$ and $u(0)\in \mathcal A_0$, where $T_{\mathfrak B} (\cdot)$ is defined  in  \ceqref{timeB}.
\end{lemma}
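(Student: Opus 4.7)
The plan is to revisit the proof of Lemma~\ref{lem:H2bd} and make all estimates uniform in the shift $\varepsilon\in[0,1]$. The differential inequality \eqref{sep6.3} for $w=v-u$ was derived without reference to any particular shift of $\omega$, so it continues to hold for $v(\cdot,\theta_{-t-\varepsilon}\omega,v(0))$ and $w(\cdot,\theta_{-t-\varepsilon}\omega,w(0))$ with the noise sample $z(\theta_t\omega)$ replaced by $z(\theta_{t-\varepsilon}\omega)$. Applying Gronwall on $[s,t]$ for $s\in[t-1,t]$ and integrating in $s$ as in the proof of Lemma~\ref{lem:H2bd}, then replacing $\omega$ by $\theta_{-t-\varepsilon}\omega$, I obtain the analogue of \eqref{4.22}:
\begin{align*}
&\|Aw(t,\theta_{-t-\varepsilon}\omega,w(0))\|^2 \\
&\quad \leq C e^{\,C\int_{t-1}^t \|A^{1/2}v(\eta,\theta_{-t-\varepsilon}\omega,v(0))\|^8\,d\eta \;+\; C\int_{-1-\varepsilon}^{-\varepsilon}|z(\theta_\eta\omega)|^8\,d\eta} \\
&\qquad\times \left(\int_{t-1}^t \|Aw(s,\theta_{-t-\varepsilon}\omega,w(0))\|^2\,ds + 1\right).
\end{align*}

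The key point is that the sup over $\varepsilon\in[0,1]$ can be absorbed into a slightly enlarged tempered random variable. For the noise term, $\int_{-1-\varepsilon}^{-\varepsilon}|z(\theta_\eta\omega)|^8\,d\eta \leq \int_{-2}^0|z(\theta_\eta\omega)|^8\,d\eta$ uniformly in $\varepsilon\in[0,1]$. For the $v$-terms, I apply estimate \eqref{mar8.3} with $\omega$ replaced by $\theta_{-\varepsilon}\omega$, which gives
\[
\int_{t-1}^t \|A^{1/2}v(s,\theta_{-t-\varepsilon}\omega,v(0))\|^8\,ds \leq |\zeta_3(\theta_{-\varepsilon}\omega)|^4,\quad
\int_{t-1}^t \|Av(s,\theta_{-t-\varepsilon}\omega,v(0))\|^2\,ds \leq \tfrac{1}{\nu}\zeta_3(\theta_{-\varepsilon}\omega),
\]
for $t\geq T_{\mathfrak B}(\omega)$. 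Since $\zeta_3$ was defined in terms of $\sup_{\tau\in(-5,0)}|z(\theta_\tau\omega)|^6$ and the tempered variable $\zeta_1$, the supremum $\bar\zeta_3(\omega):=\sup_{\varepsilon\in[0,1]}\zeta_3(\theta_{-\varepsilon}\omega)$ can be bounded by a new tempered random variable built from $\sup_{\tau\in(-6,0)}|z(\theta_\tau\omega)|^6$ and $\sup_{\varepsilon\in[0,1]}\zeta_1(\theta_{-\varepsilon}\omega)$, both of which are tempered thanks to the continuity of $z$ in time and the temperedness of $\zeta_1$.

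The remaining integral in the prefactor is handled as in \eqref{mar8.7}, splitting $w=v-u$ and using the $H^2$-boundedness of $\mathcal A_0$ together with the uniform bound above on $\int_{t-1}^t\|Av\|^2\,ds$. Assembling these ingredients,
\[
\rho(\omega):=Ce^{\,C|\bar\zeta_3(\omega)|^4 + C\int_{-2}^0|z(\theta_\eta\omega)|^8\,d\eta}\bigl(\bar\zeta_3(\omega)+1\bigr)
\]
defines a tempered random variable for which $\|Aw(t,\theta_{-t-\varepsilon}\omega,w(0))\|^2 \leq \rho(\omega)$ for all $\varepsilon\in[0,1]$ and $t\geq T_{\mathfrak B}(\omega)$. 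One small point needing care is that the defining property \eqref{timeB} of $T_{\mathfrak B}(\omega)$ be invoked at time $t+\varepsilon$ rather than $t$; since $\mathfrak B$ is tempered and $\varepsilon\in[0,1]$, this at worst costs an extra unit in $T_{\mathfrak B}$, which is absorbed silently.

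The main obstacle is bookkeeping: ensuring that every $\omega$-dependent random variable encountered along the way (namely $\zeta_1$, $\zeta_2$, $\zeta_3$, and the exponentials of integrals of $|z|^k$) retains its temperedness after taking a supremum over the compact parameter set $\varepsilon\in[0,1]$. This is routine given the continuity of the Ornstein--Uhlenbeck process in time, but must be verified carefully because the resulting $\rho(\omega)$ will be used in the next section to establish the $(H,H^2)$-smoothing effect, where a genuinely uniform-in-$\varepsilon$ bound (not just $\mathbb P$-a.s. for each $\varepsilon$) is required.
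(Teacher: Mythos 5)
Your overall strategy coincides with the paper's: both rerun the $H^2$-distance estimate of \cref{lem:H2bd} starting from the differential inequality \eqref{sep6.3}, use the Gronwall-plus-averaging trick over a unit time window, and then control the resulting integrals of $\|A^{1/2}v\|^8$ and $\|Aw\|^2$ by \eqref{mar8.3} and \eqref{mar8.7}. The one place you diverge is in how the uniformity in $\varepsilon$ is obtained. You decompose $\theta_{-t-\varepsilon}\omega=\theta_{-t}(\theta_{-\varepsilon}\omega)$ and invoke \eqref{mar8.3} at the shifted fibre $\theta_{-\varepsilon}\omega$, which forces you to introduce $\bar\zeta_3(\omega)=\sup_{\varepsilon\in[0,1]}\zeta_3(\theta_{-\varepsilon}\omega)$ and, implicitly, $\sup_{\varepsilon\in[0,1]}T_{\mathfrak B}(\theta_{-\varepsilon}\omega)$. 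The paper instead decomposes $\theta_{-t-\varepsilon}\omega=\theta_{-(t+\varepsilon)}\omega$ and applies \eqref{mar8.3} at the elapsed time $t+\varepsilon$ with the \emph{same} fibre $\omega$: since \eqref{mar8.3} holds for every elapsed time $\geq T_{\mathfrak B}(\omega)$ with the fixed bound $\zeta_3(\omega)$, and $t+\varepsilon\geq t\geq T_{\mathfrak B}(\omega)$ while the initial datum lies in $\mathfrak B(\theta_{-(t+\varepsilon)}\omega)$ exactly as the hypothesis requires, no supremum over shifted random variables is needed at all. The paper simply enlarges the integration window to $[t+\varepsilon-2,t+\varepsilon]$ and the noise window to $(-2,0)$, arriving at $\rho(\omega)=C\zeta_6(\omega)(\zeta_3(\omega)+1)$ with $\zeta_6$ built from $|\zeta_3(\omega)|^4$ and $\sup_{s\in(-2,0)}|z(\theta_s\omega)|^8$.

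The soft spot in your version is the sentence claiming that invoking \eqref{timeB} at time $t+\varepsilon$ ``at worst costs an extra unit in $T_{\mathfrak B}$, which is absorbed silently.'' The definition \eqref{timeB} does not by itself give $T_{\mathfrak B}(\theta_{-\varepsilon}\omega)\leq T_{\mathfrak B}(\omega)+1$ uniformly in $\varepsilon\in[0,1]$, so as written your appeal to \eqref{mar8.3} at the fibre $\theta_{-\varepsilon}\omega$ is only licensed for $t\geq T_{\mathfrak B}(\theta_{-\varepsilon}\omega)$, not for $t\geq T_{\mathfrak B}(\omega)$ as the lemma asserts. Your temperedness claim for $\bar\zeta_3$ is verifiable (the shift by $\theta_{-\varepsilon}$ only enlarges the windows in the defining integrals of $\zeta_1$ and the sup of $|z|^6$ by at most one unit), but both issues disappear if you replace the fibre shift by the time shift $t\mapsto t+\varepsilon$, which is precisely the paper's move.
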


\begin{proof}
Recall from  \ceqref{sep6.3} that
\begin{align*}
 \frac{\d}{\d t}\|Aw\|^2  &\leq C\left (1+\|A^\frac12 v\|^4 +|z(\theta_t\omega)|^4 \right )
\|Aw\|^2
+C \left (1+\|A^\frac12 v\|^8 +|z(\theta_t\omega)|^8 \right)\\
&\leq     C\left (1+\|A^\frac12 v\|^8 +|z(\theta_t\omega)|^8 \right )\left( \|Aw\|^2+1\right)   .
\end{align*}
For $t \geq 1$, integrating the  inequality from $(s,t)$ for $s\in (t-1,t)$  and then integrating over $s\in (t-1,t)$,  we obtain
\ben
 \|Aw(t)\|^2
& \leq    C\int_{t-1}^t   \left (1 +\|A^\frac12v(s)\|^8+|z(\theta_s\omega)|^8 \right ) \left( \|Aw(s)\|^2 +1\right)\d s.
\ee
For $\varepsilon\in [0,1]$, replacing $\omega$ with $\theta_{-t-\varepsilon}\omega$ yields
\ben
& \|Aw(t,\theta_{-t-\varepsilon}\omega, w(0))\|^2   \\
&\quad  \leq   C  \int_{t-1}^t   \left (1 +\|A^\frac12v(s,\theta_{-t-\varepsilon}\omega, v(0))\|^8+|z(\theta_{s-t-\varepsilon}\omega)|^8 \right ) \left( \|Aw(s)\|^2 +1\right) \d s   \\
&\quad  \leq   C  \int_{t+\varepsilon-2}^{t+\varepsilon}   \left (1 +\|A^\frac12v(s,\theta_{-t-\varepsilon}\omega, v(0))\|^8+|z(\theta_{s-t-\varepsilon}\omega)|^8 \right ) \left( \|Aw(s)\|^2 +1\right) \d s .
\ee
Note that from   \ceqref{mar8.3}  it follows
\ben
\sup_{\eta\in (t-2,t)}
 \|A^{\frac12}v (\eta ,\theta_{-t } \omega, v(0)) \|^8
&\leq  |\zeta_3(\omega)|^4 ,\quad t\geq T_{\mathfrak B} (\omega) .
\ee
Therefore,  by
\[
 \zeta_6(\omega):=   |\zeta_3(\omega)|^4 +\sup_{s\in(-2,0)} |z(\theta_s\omega)|^8+1
\]
we    defined a tempered random variable $\zeta_6$ such that
\ben
  \|Aw(t,\theta_{-t-\varepsilon}\omega, w(0))\|^2     \leq   C \zeta_6(\omega)  \int_{t+\varepsilon-2}^{t+\varepsilon}   \! \big( \|Aw(s,\theta_{-t-\varepsilon } \omega, w(0))\|^2 +1\big)\, \d s
\ee
for all $t\geq T_{\mathfrak B} (\omega)$
uniformly for $\varepsilon\in [0, 1]$.
Recall  from   \ceqref{mar8.7}  that
\[
 \int_{t+\varepsilon-2}^{t+\varepsilon}   \|Aw(s,\theta_{-t-\varepsilon } \omega, w(0))\|^2 \, \d s \leq \frac 1 \nu \, \zeta_3(\omega) +C
\]
for all $\varepsilon\in [0,1]$ and $t\geq  T_{\mathfrak B} (\omega) $, so we obtain
\be \label{rho}
\sup_{\varepsilon\in [0,1]}
  \|Aw(t,\theta_{-t-\varepsilon}\omega, w(0))\|^2     \leq   C \zeta_6(\omega)\left(\zeta_3(\omega) +1\right)=: \rho(\omega)
\ee
for all $ t\geq T_{\mathfrak B} (\omega)$
and then the lemma follows.
\end{proof}

Then by taking $\varepsilon =0$ we  now construct an $H^2$ random absorbing set.
\begin{theorem}[$H^2$ absorbing set]
\label{theorem4.1}
 Let  \cref{assum} hold and $f\in H$. Then the  RDS $\phi$ generated by the random  NS equations  \ceqref{2.2} has a  random absorbing set $\mathfrak{B}_{H^2}$, given as a random $H^2$ neighborhood of the global attractor $\mathcal{A}_0$ of the deterministic NS equations:
\begin{align}
\mathfrak{B}_{H^2}(\omega)=\left \{v\in H^2: \, {\rm dist}_{H^2}(v,\mathcal{A}_{0}) \leq \sqrt{\rho(\omega)} \, \right\}, \quad \omega\in\Omega, \nonumber
\end{align}
where $\rho(\cdot)$ is the tempered random variable given by  \ceqref{rho}.  As a consequence, the $\mathcal{D}_H$-random attractor $\mathcal{A}$ of  \ceqref{2.2} is a bounded and tempered random set in $H^2$.

\end{theorem}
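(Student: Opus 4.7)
My plan is to lift the $H^1$ absorbing property of $\mathfrak{B}$ from \cref{theorem4.6} to the desired $H^2$ absorption, by combining it with the comparison estimate of \cref{lemma4.4} through the cocycle property of $\phi$. The heart of the argument is in fact already provided by \cref{lemma4.4}: whenever $v_0\in\mathfrak{B}(\theta_{-t-\varepsilon}\omega)$ and $u_0\in\mathcal{A}_0$, one has
\[
  \|Av(t,\theta_{-t-\varepsilon}\omega,v_0) - Au(t,u_0)\|^2 \leq \rho(\omega),\quad t\geq T_{\mathfrak{B}}(\omega),
\]
and invariance of $\mathcal{A}_0$ under the deterministic semigroup $S$ guarantees $u(t,u_0)\in\mathcal{A}_0$. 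Hence the random solution sits within $H^2$-distance $\sqrt{\rho(\omega)}$ of $\mathcal{A}_0$, i.e.\ inside $\mathfrak{B}_{H^2}(\omega)$. The purpose of the parameter $\varepsilon$ in \cref{lemma4.4} is precisely to accommodate the preliminary wait-time needed to first absorb an arbitrary tempered set into $\mathfrak{B}$.

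Concretely, given $\mathfrak{D}\in\mathcal{D}_H$ and $\omega\in\Omega$, I would first invoke \cref{theorem4.6} to pick a random time $T_{\mathfrak{D}}(\omega)\geq T_{\mathfrak{B}}(\omega)$ such that
\[
  \phi\!\left(t - T_{\mathfrak{B}}(\omega),\theta_{-t}\omega,\mathfrak{D}(\theta_{-t}\omega)\right)\subset \mathfrak{B}\!\left(\theta_{-T_{\mathfrak{B}}(\omega)}\omega\right),\quad t\geq T_{\mathfrak{D}}(\omega),
\]
by applying the $H^1$ absorbing property at the shifted base point $\theta_{-T_{\mathfrak{B}}(\omega)}\omega$. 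Next, for $v_0\in\mathfrak{D}(\theta_{-t}\omega)$ I would decompose via the cocycle as
\[
  \phi(t,\theta_{-t}\omega,v_0)\ =\ \phi\!\left(T_{\mathfrak{B}}(\omega),\theta_{-T_{\mathfrak{B}}(\omega)}\omega,\tilde v_0\right),\qquad \tilde v_0\in\mathfrak{B}\!\left(\theta_{-T_{\mathfrak{B}}(\omega)}\omega\right),
\]
and finally apply \cref{lemma4.4} at $\omega$ with $\varepsilon=0$, initial data $\tilde v_0$, and any $u_0\in\mathcal{A}_0$ to obtain $\dist_{H^2}(\phi(t,\theta_{-t}\omega,v_0),\mathcal{A}_0)\leq\sqrt{\rho(\omega)}$, establishing $\mathfrak{B}_{H^2}$ as a random absorbing set.

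For the consequence about $\mathcal{A}$, I would use invariance: writing $\mathcal{A}(\omega) = \phi(T,\theta_{-T}\omega,\mathcal{A}(\theta_{-T}\omega))$ and applying the absorption just proved to the tempered random set $\mathcal{A}$ itself gives $\mathcal{A}(\omega)\subset\mathfrak{B}_{H^2}(\omega)$ for every $\omega$; the $H^2$-boundedness of $\mathcal{A}_0$ from \cref{lem:det} together with temperedness of $\rho$ then transfers boundedness and temperedness of $\mathfrak{B}_{H^2}$ in $H^2$ to $\mathcal{A}$. I do not foresee any substantive obstacle at this stage, since all of the analytic heavy lifting has been concentrated into \cref{lemma4.4}; what remains is routine bookkeeping with the cocycle, plus a verification that $\mathfrak{B}_{H^2}$ is a measurable random set in $H^2$, which in turn reduces to measurability of $\omega\mapsto\rho(\omega)$ and continuity of $v\mapsto \dist_{H^2}(v,\mathcal{A}_0)$.
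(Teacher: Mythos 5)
Your proposal is correct and follows essentially the same route as the paper, whose own proof is just the one-line observation that the theorem is a direct consequence of \cref{lemma4.4}; you have simply filled in the routine cocycle/pullback-absorption bookkeeping (absorb $\mathfrak D$ into $\mathfrak B$ at the shifted base point $\theta_{-T_{\mathfrak B}(\omega)}\omega$, then run for time $T_{\mathfrak B}(\omega)$ and apply \cref{lemma4.4} with $\varepsilon=0$, using invariance of $\mathcal A_0$), together with the standard invariance argument for $\mathcal A\subset\mathfrak B_{H^2}$. The only minor inaccuracy is your reading of the role of $\varepsilon$: it is not there for the absorption step (you correctly take $\varepsilon=0$ here), but is needed later, in \ceqref{mar9.4} and the $(H^1,H^2)$-smoothing lemma, to control $\sup_{s\in[t-1,t]}\|Av(s,\theta_{-t}\omega,v(0))\|$.
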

\begin{proof}
This is a direct consequence of   \cref{lemma4.4}, see also   \cite[ Theorem 13]{cui}.\end{proof}

\section{The ($H,H^2$)-smoothing effect} \label{sec5}

In this section we derive the ($H,H^2$)-smoothing effect  of the random NS equations  \ceqref{2.2}. This effect  is essentially a
local $(H, H^2)$-Lipschitz continuity in initial values. It will be proved  step-by-step by   proving the local Lipschitz continuity in $H$, the  $(H,H^1)$-smoothing and finally the  $(H,H^2)$-smoothing. For ease of notations we shall denote by
 $$ \bar v(t,\omega, \bar v(0)):= v_1(t,\omega, v_1(0))-v_2(t,\omega, v_2(0)) $$
the difference between two solutions.

\subsection{Lipschitz continuity in $H$}

\begin{lemma}
 Let  \cref{assum} hold and $f\in H$.
 Then for  any tempered set $\mathfrak D\in \D_H $ there exist random variables $t_{\mathfrak D}(\cdot)$ and $L_1(\mathfrak D,\cdot) $ such that any
 two solutions $v_1$ and $v_2$ of NS equation  \ceqref{2.2} corresponding to initial values  $v_{1,0},$ $v_{2,0}$ in $ \mathfrak D( \theta_{-t_{\mathfrak D}(\omega)}\omega),$   respectively,  satisfy
\ben
 & \left \| v_1 \! \left ( t_{\mathfrak D} (\omega) ,\theta_{-t_{\mathfrak D}(\omega)}\omega,  v_{1,0}\right)
 - v_2 \! \left ( t_{\mathfrak D} (\omega) ,\theta_{-t_{\mathfrak D}(\omega)}\omega, v_{2,0}\right) \right \|^2  \\[0.8ex]
 &\quad
 \leq L_1({\mathfrak D}, \omega) \|{v}_{1,0}-v_{2,0}\|^2  ,\quad \omega \in \Omega.
\ee

\end{lemma}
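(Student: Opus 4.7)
The plan is to work with $\bar v(t):= v_1(t)-v_2(t)$, derive a linear differential inequality of the form $\frac{\d}{\d t}\|\bar v\|^2\leq \gamma(t,\omega)\|\bar v\|^2$ with an integrable (in $t$, pointwise in $\omega$) coefficient $\gamma$, and then apply Gronwall. Subtracting the two instances of \ceqref{2.2} and using the bilinearity of $B$, one obtains
\[
 \frac{\d \bar v}{\d t}+\nu A\bar v +B\bigl(\bar v,\, v_1+hz(\theta_t\omega)\bigr)+B\bigl(v_2+hz(\theta_t\omega),\,\bar v\bigr)=0.
\]
Taking the inner product with $\bar v$ in $H$, the trilinear form $(B(v_2+hz,\bar v),\bar v)$ vanishes by antisymmetry, so only $(B(\bar v,v_1+hz),\bar v)$ survives.

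Next I would estimate this remaining term by the 2D Ladyzhenskaya inequality $\|u\|_{L^4}^2\leq C\|u\|\,\|A^{1/2}u\|$ together with Young's inequality:
\[
\bigl|(B(\bar v,v_1+hz(\theta_t\omega)),\bar v)\bigr|\leq C\|\bar v\|\,\|A^{1/2}\bar v\|\,\|A^{1/2}(v_1+hz(\theta_t\omega))\|,
\]
which, after absorbing an $\tfrac{\nu}{2}\|A^{1/2}\bar v\|^2$ into the dissipation, yields
\[
 \frac{\d}{\d t}\|\bar v\|^2 + \nu\|A^{1/2}\bar v\|^2 \leq C\bigl(\|A^{1/2}v_1\|^2+|z(\theta_t\omega)|^2\|A^{1/2}h\|^2\bigr)\|\bar v\|^2 .
\]
Gronwall's lemma then gives
\[
 \|\bar v(t)\|^2\leq \|\bar v(0)\|^2\exp\!\left(C\int_0^t\|A^{1/2}v_1(s)\|^2\,\d s+C\|A^{1/2}h\|^2\!\int_0^t|z(\theta_s\omega)|^2\,\d s\right).
\]

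Now fix a tempered $\mathfrak D\in\D_H$ and replace $\omega$ by $\theta_{-t}\omega$; since $\mathfrak D$ is tempered there is a random time $t_{\mathfrak D}(\omega)\geq T_1(\omega)$ such that $e^{-\lambda t_{\mathfrak D}(\omega)}\sup_{v\in\mathfrak D(\theta_{-t_{\mathfrak D}(\omega)}\omega)}\|v\|^2\leq 1$. Using the weighted energy bound \ceqref{4.29} from \cref{lemma4.1}, together with the trivial estimate $e^{(\frac 8\alpha-3)\lambda(s-t)}\leq 1$ for $s\leq t$, I can control
\[
 \int_0^{t_{\mathfrak D}(\omega)}\!\|A^{1/2}v_1(s,\theta_{-t_{\mathfrak D}(\omega)}\omega,v_{1,0})\|^2\,\d s
 \leq \tfrac{2}{\alpha\nu}\,e^{(\frac 8\alpha-3)\lambda t_{\mathfrak D}(\omega)}\bigl(1+\zeta_1(\omega)\bigr),
\]
and the $z$-integral becomes $\int_{-t_{\mathfrak D}(\omega)}^0|z(\theta_s\omega)|^2\,\d s$, which is finite for each $\omega$ by continuity of $z(\theta_\cdot\omega)$. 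Setting
\[
 L_1(\mathfrak D,\omega)=\exp\!\left(\tfrac{2C}{\alpha\nu}e^{(\frac 8\alpha-3)\lambda t_{\mathfrak D}(\omega)}\bigl(1+\zeta_1(\omega)\bigr)+C\|A^{1/2}h\|^2\!\int_{-t_{\mathfrak D}(\omega)}^0\!|z(\theta_s\omega)|^2\,\d s\right)
\]
yields the stated inequality.

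I do not expect a real obstacle here, since all the necessary estimates are already contained in \cref{lemma4.1}; the only point requiring care is that the factor $L_1(\mathfrak D,\omega)$ will necessarily be (doubly) exponential in the waiting time $t_{\mathfrak D}(\omega)$, so one must not ask for any kind of temperedness of $L_1$ in $\omega$, only measurable finiteness for each $\omega$. The symmetry between $v_1$ and $v_2$ in the problem, together with the use of the $H$-norm on the right-hand side (not $H^1$), is what makes the Ladyzhenskaya-type estimate work cleanly; the $hz$-terms cancel because they appear identically in both equations, leaving only the bilinear-form terms above.
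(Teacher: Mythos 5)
Your proposal is correct and follows essentially the same route as the paper: subtract the two equations, test with $\bar v$, use antisymmetry to kill the $(B(v_2+hz,\bar v),\bar v)$ term, estimate the remaining trilinear term via Ladyzhenskaya/Young, apply Gronwall, and then control $\int_0^{t}\|A^{1/2}v_1\|^2\,\d s$ through the weighted bound \ceqref{4.29} and the temperedness of $\mathfrak D$ to define $t_{\mathfrak D}$ and $L_1$. The only quibble is your stated justification ``$e^{(\frac 8\alpha-3)\lambda(s-t)}\leq 1$'': what is actually needed is the reciprocal bound $e^{(\frac 8\alpha-3)\lambda(t-s)}\leq e^{(\frac 8\alpha-3)\lambda t}$ to pass from the unweighted to the weighted integral, but the displayed inequality you write is the correct one and matches the paper.
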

\begin{proof}
The  difference $\bar{v}  =v_1-v_2$ of   solutions of  \ceqref{2.2}  satisfies
\begin{align}\label{5.1}
\frac{\d \bar{v}}{
\d t}+  \nu  A\bar{v}+B( v_1+hz(\theta_t\omega))-B( v_2+hz(\theta_t\omega))=0.
\end{align}
Multiplying the  equation   \ceqref{5.1} by $\bar{v}$ and integrating over $\mathbb T^2$, by integration by parts  we have
\ben
\frac{1}{2}\frac{\d}{\d t}\|\bar{v}\|^2+  \nu \|A^{\frac12}\bar{v}\|^2&=- \big (B(\bar{v},v_1+hz(\theta_t\omega)),\bar{v} \big )
-\big (B(v_2+hz(\theta_t\omega),\bar{v}),\bar{v}\big )\nonumber\\
&=-\big (B(\bar{v},v_1+hz(\theta_t\omega)),\bar{v} \big )\nonumber\\
&\leq \|\bar{v}\|^2_{L^4}\|\nabla(v_1+hz(\theta_t\omega))\|\nonumber\\
&\leq C\|\bar{v}\|\|A^{\frac12}\bar{v}\| \left (\|A^\frac12v_1\|+\|A^\frac12hz(\theta_t\omega)\| \right)\nonumber\\
&\leq \frac\nu2\|A^{\frac12}\bar{v}\|^2+C \left(\|A^\frac12v_1\|^2+|z(\theta_t\omega)|^2 \right )\|\bar{v}\|^2.
\ee
Therefore,
\[
\frac{\d}{\d t}\|\bar{v}\|^2+  \nu \|A^{\frac12}\bar{v}\|^2
\leq C\left (\|A^\frac12v_1\|^2+|z(\theta_t\omega)|^2 \right) \|\bar{v}\|^2.
\]
Applying Gronwall's lemma, we deduce that
\begin{align} \label{mar9.1}
 & \|\bar{v}(t)\|^2+  \nu \int_0^te^{\int_s^tC(\|A^\frac12v_1(\tau) \|^2+|z(\theta_\tau\omega)|^2)\, \d \tau}\|A^{\frac12}\bar{v} (s) \|^2\, \d s \nonumber \\
& \quad
\leq e^{\int_0^tC(\|A^\frac12v_1(\tau)\|^2+|z(\theta_\tau\omega)|^2)\, \d \tau}\|\bar{v}(0)\|^2, \quad t>0.
\end{align}

Note that applying  \ceqref{4.29} we   have
\ben
   \int_0^t \|A^\frac12v_1(\tau,\theta_{-t}\omega, v_{1,0})\|^2 \, \d \tau &\leq e^{ \left(\frac 8 \alpha -3  \right)\lambda t}  \int_0^t e^{ \left( \frac 8 \alpha -3  \right) \lambda (\tau-t)} \|A^\frac12v_1 (\tau,\theta_{-t}\omega, v_{1,0})\|^2  \,  \d \tau  \\
 &   \leq \frac {2}{\alpha\nu} e^{ \left(\frac 8 \alpha -3  \right)\lambda t}  \left( e^{-\lambda t} \|v_{1,0}\|^2 +\zeta_1(\omega) \right) ,\quad t\geq T_1(\omega),
 \ee
so for  \ceqref{mar9.1} we have
\ben
 \|\bar v(t,\theta_{-t}\omega, \bar v(0))\|^2
 & \leq  e^{\int_0^tC \left (\|A^\frac12v_1(\tau,\theta_{-t}\omega, v_{1,0})\|^2+|z(\theta_{\tau-t} \omega)|^2 \right)   \d \tau }
 \|\bar{v}(0)\|^2 \\
 &\leq e^{C  e^{ \left(\frac 8 \alpha -3  \right)\lambda t}  \left( e^{-\lambda t} \|v_{1,0}\|^2 +\zeta_1(\omega) \right) +C\int_{-t}^0|z(\theta_{\tau} \omega)|^2 \, \d \tau } \|\bar{v}(0)\|^2
\ee
for all $t\geq T_1 (\omega)$.
Since $v_{1,0} $ belongs to $\mathfrak D(\theta_{-t}\omega) $ which is tempered, there exists a random variable $ t_{\mathfrak D} (\omega)  \geq T_1(\omega)$  such that
\[
 e^{-\lambda t_{\mathfrak D}(\omega) } \|v_{1,0}\|^2 \leq  e^{-\lambda t_{\mathfrak D}(\omega) } \left \|  \, \mathfrak D \!  \left (\theta_{-t_{\mathfrak D}(\omega)} \omega \right ) \right\|^2 \leq 1,\quad \omega\in \Omega.
\]
In addition, since $\mathfrak D$ is pullback absorbed by the absorbing set $\mathfrak B$,  this $t_{\mathfrak D}(\omega)$ is chosen large enough such that
\be \label{mar19.2}
 \phi \left ( t_{\mathfrak D}(\omega) , \theta_{-t_{\mathfrak D}(\omega)}\omega, \mathfrak D(\theta_{- t_{\mathfrak D}(\omega) }\omega) \right ) \subset \mathfrak B(\omega), \quad \omega\in \Omega .
\ee
Hence, we define a random variable by
\[
L_1({\mathfrak D}, \omega)=e^{C  e^{ \left(\frac 8 \alpha -3  \right)\lambda  t_{\mathfrak D}(\omega)}    ( 1+\zeta_1(\omega)  ) + C\int_{-t_{\mathfrak D}(\omega)}^0|z(\theta_{\tau} \omega)|^2 \, \d \tau } ,\quad \omega\in \Omega,
\]
then
\be \label{mar18.8}
\left \|\bar v  \! \left ( t_{\mathfrak D} (\omega) ,\theta_{-t_{\mathfrak D}(\omega)}\omega, \bar v(0) \right) \right \|^2
 \leq L_1({\mathfrak D}, \omega) \|\bar{v}(0)\|^2  ,\quad
\ee
whenever $v_{1,0},$ $v_{2,0}\in \mathfrak D \big ( \theta_{-t_{\mathfrak D}( \omega)}\omega \big),$  $\omega\in \Omega$.
\end{proof}

\subsection{$(H,H^1)$-smoothing}

The $(H,H^1)$-smoothing will be proved  by two steps. Since  we have constructed  by    \ceqref{4.19} an $H^1$ random absorbing set $\mathfrak B$, we begin with initial values lying  in the absorbing set $\mathfrak B$, and then consider the  initial values  in every tempered set $\mathfrak D $ in $\D_H$.

\begin{lemma}[$(H,H^1)$-smoothing on $\mathfrak B$]
 Let  \cref{assum} hold and $f\in H$. Then for   the random absorbing set $\mathfrak B$ defined by  \ceqref{4.19}     there exist random variables $\tau_\omega$ and $L_2(\mathfrak B,\omega) $ such that any
 two solutions $v_1$ and $v_2$ of the random NS  equation  \ceqref{2.2} corresponding to initial values  $v_{1,0},$ $v_{2,0}$ in $ \mathfrak B( \theta_{- \tau_\omega}\omega),$   respectively,  satisfy
\be  \label{mar18.9}
 \left \| v_1  (  \tau_\omega,\theta_{-\tau_\omega}\omega,  v_{1,0} )
 - v_2   ( \tau_\omega,\theta_{-\tau_\omega}\omega, v_{2,0} ) \right \|^2_{H^1}
 \leq L_2({\mathfrak B}, \omega) \|{v}_{1,0}-v_{2,0}\|^2  ,\quad \omega \in \Omega.
\ee
\end{lemma}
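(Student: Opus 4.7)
My plan is to test the difference equation
\[
\partial_t \bar v + \nu A \bar v + B(v_1+hz(\theta_t\omega), \bar v) + B(\bar v, v_2+hz(\theta_t\omega)) = 0
\]
against $A \bar v$, where I have used the identity $B(v_1+hz) - B(v_2+hz) = B(v_1+hz, \bar v) + B(\bar v, v_2+hz)$. Integration by parts yields
\[
\frac12 \frac{\d}{\d t}\|A^{1/2}\bar v\|^2 + \nu \|A\bar v\|^2 = -\big(B(v_1+hz, \bar v), A\bar v\big) - \big(B(\bar v, v_2+hz), A\bar v\big).
\]
Applying the two-dimensional Ladyzhenskaya inequality $\|u\|_{L^4}^2 \leq C\|u\|\|A^{1/2}u\|$ to both trilinear terms and using Young's inequality to absorb a factor of $\|A\bar v\|^2$ into the dissipative term, I would arrive at an energy inequality of the form
\[
\frac{\d}{\d t}\|A^{1/2}\bar v\|^2 + \nu \|A\bar v\|^2 \leq C\,\Phi(t)\,\|A^{1/2}\bar v\|^2 + C\,\Psi(t)\,\|\bar v\|^2,
\]
where $\Phi(t)$ depends polynomially on $\|A^{1/2}v_i\|$ and $|z(\theta_t\omega)|$, while $\Psi(t)$ carries the genuinely $H^2$-level factor $\|A(v_2+hz(\theta_t\omega))\|^2$.

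Next I would apply Gronwall's lemma on $(s,t)$ for $s \in (t-1,t)$ and then integrate with respect to $s$ in the uniform-Gronwall fashion used throughout \cref{sec3} and \cref{sec4}, producing
\[
\|A^{1/2}\bar v(t)\|^2 \leq C \exp\!\left(\int_{t-1}^t C\Phi(\tau)\, \d \tau\right) \left( \int_{t-1}^t \|A^{1/2}\bar v(s)\|^2 \, \d s + \int_{t-1}^t \Psi(s)\,\|\bar v(s)\|^2 \, \d s \right).
\]
Replacing $\omega$ with $\theta_{-\tau_\omega}\omega$ and choosing $\tau_\omega$ large enough so that both \ceqref{mar8.3} and the $(H,H)$-Lipschitz estimate \ceqref{mar18.8} apply on the relevant time interval, I would then control the right-hand side by: the $L^\infty$-in-time bound $\sup_{\eta\in[\tau_\omega-2,\tau_\omega]}\|A^{1/2}v_i(\eta,\theta_{-\tau_\omega}\omega,v_{i,0})\|^2 \leq \zeta_3(\omega)$ and the time-integrated bound $\int_{\tau_\omega-1}^{\tau_\omega}\|Av_2(s,\theta_{-\tau_\omega}\omega,v_{2,0})\|^2 \, \d s \leq \zeta_3(\omega)/\nu$ (both furnished by \ceqref{mar8.3}); the weighted-in-time control of $\|A^{1/2}\bar v\|^2$ already available in \ceqref{mar9.1}, which bounds $\int_{\tau_\omega-1}^{\tau_\omega}\|A^{1/2}\bar v\|^2\,\d s$ in terms of $\|\bar v(0)\|^2$; and finally the estimate $\|\bar v(s)\|^2 \leq L_1(\mathfrak B,\omega)\|\bar v(0)\|^2$ from the preceding lemma. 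Collecting all tempered factors defines the desired random variable $L_2(\mathfrak B,\omega)$ and delivers \ceqref{mar18.9} at $t=\tau_\omega$.

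The main technical obstacle is the second trilinear term $\big(B(\bar v, v_2+hz), A\bar v\big)$: Gagliardo--Nirenberg necessarily produces a factor of the full $H^2$-norm $\|A(v_2+hz)\|$, which is not uniformly bounded in time and therefore must be paired with $\|\bar v\|^2$ rather than $\|A^{1/2}\bar v\|^2$ after Young's inequality. This in turn forces me to invoke simultaneously both the preceding $(H,H)$-Lipschitz estimate \emph{and} the time-integrated $H^2$-regularity $\int_{t-1}^t \|Av_2\|^2\, \d s \leq \zeta_3(\omega)/\nu$. The latter is available precisely because the initial data lie in the $H^1$-absorbing set $\mathfrak B$; without these two tools the $H^1$ Gronwall iteration would fail to close, which is the real reason this step comes only \emph{after} the $H^1$-absorbing set and the $(H,H)$-Lipschitz estimate have been established.
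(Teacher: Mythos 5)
Your strategy is workable and closes, but it takes a genuinely different (and more roundabout) route than the paper, and it rests on a claim that is not actually true. The paper tests the difference equation with $A\bar v$ using the decomposition $B(\bar v,v_1+hz)+B(v_2+hz,\bar v)$ and estimates the term in which $\nabla$ falls on a solution by putting $L^\infty$ on $\bar v$ via Agmon's inequality, $\big|\big(B(\bar v,v_1+hz),A\bar v\big)\big|\le \|\bar v\|_{L^\infty}\|\nabla(v_1+hz)\|\,\|A\bar v\|\le C\|A^{\frac12}\bar v\|^{\frac12}\|A\bar v\|^{\frac32}\big(\|A^{\frac12}v_1\|+|z(\theta_t\omega)|\big)$; the other term is handled with $\|A^{\frac12}\bar v\|_{L^4}\|v_2+hz\|_{L^4}\|A\bar v\|$. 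In both cases only $H^1$-norms of $v_1,v_2$ appear, so the resulting differential inequality \ceqref{mar9.2} closes entirely at the $H^1$ level, and the uniform-Gronwall step combined with \ceqref{mar9.1} and the forward $H^1$ bound $\zeta_7$ on $\mathfrak B$ finishes the proof. Your assertion that Gagliardo--Nirenberg \emph{necessarily} produces the factor $\|A(v_2+hz)\|$ is therefore incorrect: it only arises because you chose to distribute the $L^4$-norms as $\|\bar v\|_{L^4}\|\nabla(v_2+hz)\|_{L^4}$ instead of $\|\bar v\|_{L^\infty}\|\nabla(v_2+hz)\|_{L^2}$. That said, your version still works: pairing the $\|A(v_2+hz)\|^2$ coefficient with $\|\bar v\|^2$, invoking the time-integrated bound $\nu\int_{t-1}^t\|Av_2\|^2\,\d s\le\zeta_3(\omega)$ from \ceqref{mar8.3} together with a sup-in-time version of the $H$-Lipschitz estimate (obtained from \ceqref{mar9.1}, not merely the endpoint bound \ceqref{mar18.8}) does control the extra term. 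The price is that your proof needs two additional ingredients (\ceqref{mar8.3} and the intermediate-time $H$-Lipschitz bound) that the paper's argument does not use in this lemma, so the paper's route is both shorter and less demanding on the regularity of $v_2$.
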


\begin{proof}
Multiplying   \ceqref{5.1} by $A\bar{v}$ and integrating over $\mathbb T^2$, by integration by parts we have
\begin{align}\label{5.2}
\frac{1}{2}\frac{\d}{\d t}\|A^\frac12\bar{v}\|^2+ \nu \|A\bar{v}\|^2&=- \big (B(\bar{v},v_1+hz(\theta_t\omega)),A\bar{v}\big )
-\big (B(v_2+hz(\theta_t\omega),\bar{v}),A\bar{v}\big ) .
\end{align}
Applying the H\"{o}lder inequality, the Gagliardo-Nirenberg inequality and the Young inequality, we deduce that
\begin{align}\label{5.3}
 \big| \big (B(\bar{v},v_1+hz(\theta_t\omega)),A\bar{v}\big )\big|
 &\leq \|\bar{v}\|_{L^\infty}\|\nabla(v_1+hz(\theta_t\omega))\|\|A\bar{v}\|\nonumber\\
&\leq C\|\bar{v}\|^\frac12\|A\bar{v}\|^\frac32
\left (\|A^\frac12v_1\|+\|A^\frac12hz(\theta_t\omega)\| \right)\nonumber\\
&\leq C\|A^\frac12\bar{v}\|^\frac12\|A\bar{v}\|^\frac32 \left (\|A^\frac12v_1\|+ |z(\theta_t\omega) | \right)\nonumber\\
&\leq \frac \nu 4\|A\bar{v}\|^2+C \left (\|A^\frac12v_1\|^4+|z(\theta_t\omega)|^4\right)\|A^\frac12\bar{v}\|^2,
\end{align}
and that
\begin{align}\label{5.4}
 \big| \big (B(v_2+hz(\theta_t\omega),\bar{v}),A\bar{v}\big )\big| &\leq \|A^\frac12\bar{v}\|_{L^4}\|v_2+hz(\theta_t\omega)\|_{L^4}\|A\bar{v}\|\nonumber\\
&\leq C\|A^\frac12\bar{v}\|^\frac12\|A\bar{v}\|^\frac32 \left(\|A^\frac12v_2\|+\|A^\frac12hz(\theta_t\omega)\| \right)\nonumber\\
&\leq \frac \nu 4\|A\bar{v}\|^2+C\left (\|A^\frac12v_2\|^4+|z(\theta_t\omega)|^4\right)\|A^\frac12\bar{v}\|^2.
\end{align}
Substituting  \ceqref{5.3}- \ceqref{5.4} into  \ceqref{5.2}, we have
\begin{align} \label{mar9.2}
\frac{\d}{\d t}\|A^\frac12\bar{v}\|^2+  \nu \|A\bar{v}\|^2
\leq C\left(\|A^\frac12v_1\|^4+\|A^\frac12v_2\|^4+|z(\theta_t\omega)|^4\right)\|A^\frac12\bar{v}\|^2.
\end{align}
Applying Gronwall's lemma, we obtain for $s\in(t-1, t-\frac 12)$, $t\geq 1$, that
\begin{align}
 & \|A^\frac12\bar{v}(t)\|^2
 +  \nu  \int_s^te^{\int_\eta^tC(\|A^\frac12v_1(\tau)\|^4+\|A^\frac12v_2(\tau)\|^4+|z(\theta_\tau\omega)|^4)\, \d \tau}\|A\bar{v}(\eta) \|^2 \, \d \eta
\nonumber\\
&\quad \leq e^{ \int_s^tC(\|A^\frac12v_1(\tau) \|^4+\|A^\frac12v_2(\tau)\|^4+|z(\theta_\tau\omega)|^4)\, \d \tau}\|A^\frac12\bar{v}(s)\|^2, \nonumber
\end{align}
and then integrating w.r.t. $s$ over $(t-1,t-\frac 12)$  yields
\ben
 & \|A^\frac12\bar{v}(t)\|^2
 + \nu \int_{t-\frac 12} ^te^{\int_\eta^tC(\|A^\frac12v_1(\tau)\|^4+\|A^\frac12v_2(\tau)\|^4+|z(\theta_\tau\omega)|^4)\, \d \tau}\|A\bar{v}(\eta) \|^2 \, \d \eta
\nonumber\\
&\quad \leq  2 \int_{t-1} ^{t-\frac 12}   e^{ \int_s^tC(\|A^\frac12v_1(\tau) \|^4+\|A^\frac12v_2(\tau)\|^4+|z(\theta_\tau\omega)|^4)\, \d \tau}\|A^\frac12\bar{v}(s)\|^2  \, \d s \\
& \quad \leq  2e^{ \int_{t-1}^tC(\|A^\frac12v_1(\tau) \|^4+\|A^\frac12v_2(\tau)\|^4+|z(\theta_\tau\omega)|^4)\, \d \tau}  \int_{t-1} ^{t-\frac 12} \|A^\frac12\bar{v}(s)\|^2 \, \d s
.
\ee
By  \ceqref{mar9.1}  this gives
\begin{align}
 & \|A^\frac12\bar{v}(t)\|^2
 + \nu\int_{t-\frac 12} ^te^{\int_\eta^tC(\|A^\frac12v_1(\tau)\|^4+\|A^\frac12v_2(\tau)\|^4+|z(\theta_\tau\omega)|^4)\, \d \tau}\|A\bar{v}(\eta) \|^2 \, \d \eta \nonumber  \\
  &\quad  \leq  2e^{ \int_{0}^tC(\|A^\frac12v_1(\tau) \|^4+\|A^\frac12v_2(\tau)\|^4+|z(\theta_\tau\omega)|^4+1)\, \d \tau}
   \|\bar{v}(0)\|^2
,\quad t\geq 1. \label{mar19.1}
\end{align}

Since the absorbing set $\mathfrak B$ itself belongs to the attraction universe $\D_H$,  it pullback absorbs itself. Hence  there is a random variable $\tau_\omega \geq 1$ such that
\be \label{mar18.2}
 \phi \big(\tau_\omega,\theta_{-\tau_\omega}\omega, \B(\theta_{-\tau_\omega}\omega)\big)
 \subset \B(\omega), \quad \omega\in \Omega.
\ee
 Replacing $\omega$ with $\theta_{-\tau_\omega} \omega$ in  \ceqref{mar19.1}  we have  the estimate at $t=\tau_\omega$ that
\begin{align}
  \big \|
 A^{\frac 12}  \bar {v}(\tau_\omega,\theta_{-\tau_\omega}\omega,  \bar v(0))  \big \| {^2}
 \leq  2e^{ C\int_{0}^{\tau_\omega}  \sum_{i=1}^2\|A^\frac12v_i(s, \theta_{-\tau_\omega}\omega, v_i(0)) \|^4 \d s+ C\int^0_{-\tau_\omega} (|z(\theta_s\omega)|^4+1) \d s}
   \|\bar v(0) \|^2 .  \label{mar18.1}
\end{align}

 In order to bound the right-hand side of  \ceqref{mar18.1} we recall from  \ceqref{4.27} that
\ben
\frac{\d}{\d t}\|A^{\frac12}v\|^2
&\leq    C\left ( 1 +  |z(\theta_t\omega)|^2 \right)  \|A^{\frac12}v\|^2  +
C \left(1 +|z(\theta_t\omega)|^6  \right) .
\ee
Since the absorbing set $\mathfrak B $ is bounded and tempered in $H^1$, for  initial values $v(0)$ in   $\mathfrak B(\omega)$ we apply  Gronwall's lemma to obtain
\ben
 \|A^{\frac12}v(s,\omega, v(0))\|^2
&\leq e^{ \int_0^s C(1+|z(\theta_\tau\omega)|^2)\, \d \tau}\|A^{\frac12}v( 0)\|^2
\\
&\quad +C\int_0^s e^{ \int_\eta^s C(1+ |z (\theta_\tau\omega)|^2)\, \d \tau} \left (1 +|z(\theta_\eta\omega)|^6 \right) \d \eta \\
&\leq e^{ \int_0^s C(1+|z(\theta_\tau\omega)|^2)\, \d \tau}
\left[ \zeta_2(\omega)+C\int_0^s   \left (1 +|z(\theta_\eta\omega)|^6 \right) \d \eta \right]
\ee
for $s\geq 0$
uniformly for $v(0)\in  \mathfrak B(\omega)$,   where $\zeta_2(\omega) $ is the square of the radius of $\mathfrak B(\omega)$ in $H^1$. As a consequence, particularly for $s\in (0, \tau_\omega)$ we have
\ben
  \sup_{s\in (0, \tau_\omega)} \|A^{\frac12}v(s,\omega, v(0))\|^2
&\leq e^{ \int_0^{\tau_\omega}  C(1+|z(\theta_\tau\omega)|^2)\, \d \tau}
\left[\zeta_2(\omega)+C\int_0^{\tau_\omega}   \left (1 +|z(\theta_\eta\omega)|^6 \right) \d \eta \right] ,
\ee
and then, replacing $\omega $ with $ \theta_{-\tau_\omega} \omega$,
\ben
&  \sup_{s\in (0, \tau_\omega)} \|A^{\frac12}v(s, \theta_{-\tau_\omega} \omega, v(0))\|^2 \\
&\quad \leq e^{ \int^0_{-\tau_\omega}  C(1+|z(\theta_\tau\omega)|^2)\, \d \tau}
\left[ \zeta_2( \theta_{-\tau_\omega}\omega)+C\int^0_{-\tau_\omega}   \left (1 +|z(\theta_\eta\omega)|^6 \right) \d \eta \right] =: \zeta_7(\omega) .
\ee
 Hence,
\[
    \int_{0}^{\tau_\omega}  \|A^\frac12v(s, \theta_{-\tau_\omega}\omega, v(0)) \|^4 \, \d s  \leq  \tau_\omega |\zeta_7(\omega)|^2
\]
uniformly for  $v(0) $ in $  \mathfrak B( \theta_{-\tau_\omega} \omega)$ and thus  for  \ceqref{mar18.1} we have
 \ben
  \|
 A^{\frac 12} \bar {v}(\tau_\omega,\theta_{-\tau_\omega}\omega, \bar v(0))  \|^2
&  \leq  2e^{ C \tau_\omega  |\zeta_7( \omega)|^2 + C\int^0_{-\tau_\omega} (|z(\theta_s\omega)|^4+1) \d s}
   \| \bar v(0)  \|^2  \\
   &\leq 2e^{ C \tau_\omega  |\zeta_7( \omega)|^2 }
   \| \bar v(0)  \|^2.
\ee
Therefore, with the random variable
\[
  L_2(\mathfrak B, \omega):=  2e^{ C \tau_\omega  |\zeta_7( \omega)|^2 }
,\quad \omega\in \Omega,
\]
being defined we have proved the lemma.
\end{proof}

Now we derive  the   $(H,H^1)$-smoothing for initial values from any tempered set $\mathfrak D$ in $H$.
\begin{lemma} \label{lem:H1}
 Let  \cref{assum} hold and $f\in H$. Then for   any tempered set   $\mathfrak D\in \D_H$ there exist random variables $ T_1(\mathfrak D ,\cdot )  $ and $L_3(\mathfrak D,\cdot) $ such that any
 two solutions $v_1$ and $v_2$ of system  \ceqref{2.2} corresponding to initial values  $v_{1,0},$ $v_{2,0}$ in $ \mathfrak D( \theta_{-T_1(\mathfrak D,\omega)}\omega),$   respectively,  satisfy
\be \label{mar19.4}
  \left \| v_1  (T_1 ,\theta_{-T_1}\omega,  v_{1,0} )
 - v_2   ( T_1 ,\theta_{-T_1}\omega, v_{2,0} ) \right \|^2_{H^1}
 \leq L_3({\mathfrak D}, \omega) \|{v}_{1,0}-v_{2,0}\|^2  ,
\ee
where $T_1= T_1(\mathfrak D, \omega)$, $\omega\in \Omega$. In addition, $ v (T_1 ,\theta_{-T_1}\omega,  v_{0}) \in \mathfrak B(\omega)$ for all $v_0\in \mathfrak D(\theta_{-T_1}\omega) $.
\end{lemma}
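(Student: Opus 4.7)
The plan is a two-stage cocycle argument that glues together the preceding two lemmas: first use the plain $H$-Lipschitz estimate to push initial data from $\mathfrak D$ into the $H^1$-absorbing set $\mathfrak B$ while controlling the $H$-distance between two solutions, and then invoke the $(H,H^1)$-smoothing on $\mathfrak B$ to upgrade the comparison to the $H^1$-norm. The only delicate point is the bookkeeping of fibers: each random time and constant produced by the earlier lemmas must be evaluated at the correctly shifted sample.

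Concretely, set
\[
t_1 := t_{\mathfrak D}(\theta_{-\tau_\omega}\omega), \qquad T_1(\mathfrak D,\omega) := t_1 + \tau_\omega,
\]
so that $\theta_{-T_1}\omega = \theta_{-t_1}(\theta_{-\tau_\omega}\omega)$. Applying the $H$-Lipschitz lemma at the base sample $\theta_{-\tau_\omega}\omega$, with initial data $v_{1,0},v_{2,0}\in\mathfrak D(\theta_{-T_1}\omega)$, yields simultaneously the pullback absorption
\[
v_i(t_1,\theta_{-T_1}\omega,v_{i,0}) \in \mathfrak B(\theta_{-\tau_\omega}\omega), \qquad i=1,2,
\]
and the $H$-Lipschitz bound
\[
\|\bar v(t_1,\theta_{-T_1}\omega,\bar v_0)\|^2 \leq L_1(\mathfrak D,\theta_{-\tau_\omega}\omega)\,\|\bar v_0\|^2,
\]
where $\bar v_0 := v_{1,0}-v_{2,0}$. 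Writing $v_i^{\ast}:=v_i(t_1,\theta_{-T_1}\omega,v_{i,0})$ and $\bar v^{\ast}:=v_1^{\ast}-v_2^{\ast}$, the $(H,H^1)$-smoothing on $\mathfrak B$, applied at base sample $\theta_{-\tau_\omega}\omega$ with initial data in $\mathfrak B(\theta_{-\tau_\omega}\omega)$, then gives
\[
\|\bar v(\tau_\omega,\theta_{-\tau_\omega}\omega,\bar v^{\ast})\|_{H^1}^2 \leq L_2(\mathfrak B,\omega)\,\|\bar v^{\ast}\|^2.
\]

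Chaining the two inequalities via the cocycle identity
\[
\phi(T_1,\theta_{-T_1}\omega,\cdot) = \phi(\tau_\omega,\theta_{-\tau_\omega}\omega,\phi(t_1,\theta_{-T_1}\omega,\cdot))
\]
immediately yields the desired bound with
\[
L_3(\mathfrak D,\omega) := L_2(\mathfrak B,\omega)\,L_1(\mathfrak D,\theta_{-\tau_\omega}\omega).
\]
The concluding statement that $v(T_1,\theta_{-T_1}\omega,v_0)\in\mathfrak B(\omega)$ for every $v_0\in\mathfrak D(\theta_{-T_1}\omega)$ then follows by concatenating the first-stage absorption into $\mathfrak B(\theta_{-\tau_\omega}\omega)$ with the self-absorption of $\mathfrak B$ over time $\tau_\omega$. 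The main (and essentially the only) obstacle is the fiber bookkeeping: evaluating $t_{\mathfrak D}$ at $\theta_{-\tau_\omega}\omega$ rather than at $\omega$ is what arranges the intermediate fiber to land precisely at $\theta_{-\tau_\omega}\omega$ where the $(H,H^1)$-smoothing on $\mathfrak B$ is licensed, and also ensures the final endpoint lands at the base fiber $\omega$. Once this indexing is set correctly, no new analytic estimate is required.
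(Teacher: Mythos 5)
Your proposal is correct and follows essentially the same route as the paper: the paper likewise sets $T_1(\mathfrak D,\omega)=\tau_\omega+t_{\mathfrak D}(\theta_{-\tau_\omega}\omega)$, chains the $H$-Lipschitz estimate (evaluated at the shifted fiber $\theta_{-\tau_\omega}\omega$) with the $(H,H^1)$-smoothing on $\mathfrak B$ via the cocycle identity, takes $L_3(\mathfrak D,\omega)=L_2(\mathfrak B,\omega)\,L_1(\mathfrak D,\theta_{-\tau_\omega}\omega)$, and proves the final membership in $\mathfrak B(\omega)$ by concatenating absorption into $\mathfrak B(\theta_{-\tau_\omega}\omega)$ with the self-absorption of $\mathfrak B$ over time $\tau_\omega$. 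Your fiber bookkeeping matches the paper's exactly.
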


\begin{proof}
Since ${\mathfrak D} \in \D_H$   is pullback absorbed by $\mathfrak B$, recall from  \ceqref{mar19.2} that  we have a random variable $  t_{\mathfrak D} (\cdot) $ such that
\be \label{mar19.5}
 \phi \left (  t_{\mathfrak D}(\omega) , \theta_{-  t_{\mathfrak D}(\omega)}\omega, \mathfrak D(\theta_{-  t_{\mathfrak D}(\omega)}\omega) \right) \subset \mathfrak B(\omega), \quad \omega\in \Omega.
\ee
Therefore,    for all $ \omega\in \Omega $   we have
\ben
&   \left \|  \bar v  \!\left( \tau_\omega+   t_{\mathfrak D}(\theta_{-\tau_\omega} \omega)   , \, \theta_{-\tau_\omega  - t_{\mathfrak D}(\theta_{-\tau_\omega} \omega)}
 \omega, \, \bar v(0)\right)
   \right \|^2_{H^1}
    \nonumber \\
         &\quad = \left \| \bar v  \! \left( \tau_\omega  , \, \theta_{-\tau_\omega   }
 \omega, \,  \bar v \big(   t_{\mathfrak D}(\theta_{-\tau_\omega} \omega) ,\,  \theta_{-\tau_\omega- t_{\mathfrak D}(\theta_{-\tau_\omega} \omega)  }
 \omega,  \, \bar v(0)\big) \right)
   \right \|^2_{H^1} \nonumber
   \\
 &
  \quad \leq  L_2  (\mathfrak B,   \omega  )   \left \|  \bar v \!  \left( t_{\mathfrak D}(\theta_{-\tau_\omega} \omega),\,  \theta_{-\tau_\omega- t_{\mathfrak D}(\theta_{-\tau_\omega} \omega)    }
 \omega, \,  \bar v(0) \right)
   \right \|^2  \quad \text{(by  \ceqref{mar18.9})} \nonumber \\
 &\quad \leq     L_2  (\mathfrak B,   \omega  )   L_1  \big (\mathfrak D, \theta_{-\tau_\omega}\omega  \big)  \|\bar{v}(0)\|^2  \quad \text{(by  \ceqref{mar18.8})}
 \nonumber
\ee
for all $v_{1,0},$ $v_{2,0}\in \mathfrak D \left (  \theta_{-\tau_\omega  - t_{\mathfrak D}(\theta_{-\tau_\omega} \omega)} \omega \right)$.  With
 \ben
 & T_1({\mathfrak D,\omega}) := \tau_\omega + t_{\mathfrak D}(\theta_{-\tau_\omega} \omega) ,  \\
 & L_3(\mathfrak D,\omega):=  L_2 \left (\mathfrak B,    \omega \right )   L_1 \left(\mathfrak D, \theta_{-\tau_\omega}\omega  \right) ,
 \ee
we have proved  \ceqref{mar19.4}.

From the definition  \ceqref{mar19.5}  of $ t_{\mathfrak D}(\theta_{-\tau_\omega} \omega)$ it is clear that, for all $v_0\in \mathfrak D(\theta_{-T_1}\omega) $,
 $$
 y:= v \left(    t_{\mathfrak D}(\theta_{-\tau_\omega} \omega)   , \, \theta_{- t_{\mathfrak D}(\theta_{-\tau_\omega} \omega)} \circ \theta_{-\tau_\omega}
 \omega, \,  v(0)\right) \in \mathfrak B(\theta_{-\tau_\omega}\omega).
$$
In addition, by the definition  \ceqref{mar18.2}  of $\tau_\omega$ we know that
\[
 v(\tau_\omega, \theta_{-\tau_\omega} \omega, y) \in \mathfrak B(\omega).
\]
Therefore,
\ben
    v\left (T_1 ,\theta_{-T_1}\omega,  v_{0}\right)
    & =  v \left( \tau_\omega+   t_{\mathfrak D}(\theta_{-\tau_\omega} \omega)   , \, \theta_{- t_{\mathfrak D}(\theta_{-\tau_\omega} \omega)}(\theta_{-\tau_\omega}
 \omega), \,  v(0)\right)\\
 &= v(\tau_\omega, \theta_{-\tau_\omega} \omega, y)
  \in \mathfrak B(\omega)
\ee
 for all $v_0\in \mathfrak D(\theta_{-T_1}\omega) $, as desired.
 \end{proof}

\subsection{$(H , H^2)$-smoothing}

We next prove the $(H , H^2)$-smoothing property. For ease of analysis we  restrict again ourselves on the absorbing set $\mathfrak B $ first. We  begin with the following useful estimate.

\begin{lemma}  Let  \cref{assum} hold and $f\in H$.
 Then the solutions $v$ of  \ceqref{2.2} corresponding to initial values  $v(0)$  satisfy the estimate
\be
\sup_{s\in [t-1,t]}
 \|Av(s,\theta_{-t}\omega,  v(0)) \|^2
  \leq     \rho(\omega) +\|\mathcal A_0\|_{H^2}^2,
 \quad  t\geq T_{\mathfrak B} (\omega)+ 1,\label{mar9.4}
\ee
whenever  $v(0) \in\mathfrak B (\theta_{-t} \omega)$, where  $T_{\mathfrak B} $ is the random variable given in    \ceqref{timeB}.
\end{lemma}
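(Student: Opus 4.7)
The plan is to recover this estimate directly from \cref{lemma4.4} by reparametrizing $s \in [t-1,t]$ as $s = t-\varepsilon$ with $\varepsilon \in [0,1]$, which transforms $\theta_{-t}\omega$ into the shifted driving noise $\theta_{-s-\varepsilon}\omega$ to which the lemma applies.

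First, I fix $t \geq T_{\mathfrak B}(\omega)+1$ and $v(0) \in \mathfrak B(\theta_{-t}\omega)$, and for each $s \in [t-1,t]$ set $\varepsilon := t-s \in [0,1]$. Observe the identity
\[
\theta_{-t}\omega = \theta_{-s-\varepsilon}\omega,
\]
so that $v(s,\theta_{-t}\omega,v(0)) = v(s,\theta_{-s-\varepsilon}\omega,v(0))$ and the initial condition lies in $\mathfrak B(\theta_{-s-\varepsilon}\omega)$. Since $s \geq t-1 \geq T_{\mathfrak B}(\omega)$, \cref{lemma4.4} applies (with $t$ there being our $s$, and $\varepsilon$ kept in $[0,1]$), yielding for any $u_0 \in \mathcal A_0$ and the corresponding deterministic trajectory $u(s,u_0)$ the bound
\[
\|Av(s,\theta_{-t}\omega,v(0)) - Au(s,u_0)\|^2 \leq \rho(\omega).
\]

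Second, I use invariance of the global attractor $\mathcal A_0$ under $S$: since $u_0 \in \mathcal A_0$, one has $u(s,u_0) \in \mathcal A_0$ for every $s \geq 0$, and by \cref{lem:det} the attractor is bounded in $H^2$, so $\|Au(s,u_0)\| \leq \|\mathcal A_0\|_{H^2}$. Combining with the triangle inequality $\|Av\|^2 \leq 2\|Av-Au\|^2 + 2\|Au\|^2$ (or, after redefining $\rho$ to absorb the factor $2$, simply $\|Av\|^2 \leq \rho(\omega) + \|\mathcal A_0\|_{H^2}^2$ with the constant in the definition of $\rho$ adjusted), the claim follows uniformly in $s \in [t-1,t]$, giving the desired $\sup$ estimate.

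The only subtle point is the cocycle bookkeeping of the shifted noise: one must verify that \cref{lemma4.4} is applicable with the same $\omega$ on both sides of the required inequality and not with a shifted argument such as $\theta_{-\varepsilon}\omega$. The identity $\theta_{-t}\omega = \theta_{-s-\varepsilon}\omega$ with $\varepsilon = t-s$ is exactly what makes $\omega$ remain fixed, and the uniformity in $\varepsilon \in [0,1]$ built into \cref{lemma4.4} is what makes the bound uniform over the whole interval $s\in[t-1,t]$. No new energy estimate or Gronwall argument is needed.
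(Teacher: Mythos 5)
Your proof is correct and follows essentially the same route as the paper: both write $\theta_{-t}\omega=\theta_{-s-(t-s)}\omega$ with $\varepsilon=t-s\in[0,1]$, invoke \cref{lemma4.4} for $s\ge t-1\ge T_{\mathfrak B}(\omega)$, and then use the $H^2$-boundedness of $\mathcal A_0$. Your remark that the triangle inequality strictly produces a factor of $2$ (absorbed by adjusting the generic constant in the definition of $\rho$) is a point the paper's one-line proof glosses over, but it does not affect the validity of either argument.
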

\begin{proof}
For  $ t\geq T_{\mathfrak B} (\omega)+1$ and $s\in (t-1,t)$ so that $s\geq T_{\mathfrak B} (\omega)$, by
  \cref{lemma4.4} we have
\[
 \|Av(s,\theta_{-t}\omega,  v(0)) \|^2    =
 \|Av(s,\theta_{-s-(t-s)}\omega,  v(0)) \|^2
 \leq     \rho(\omega) +\|\mathcal A_0\|_{H^2}^2
\]
uniformly for $s\in [t-1,t]$ and $v(0) \in\mathfrak B (\theta_{-t} \omega)$.
\end{proof}

\begin{lemma}[$(H^1, H^2)$-smoothing on $\mathfrak B$]\label{lemma5.1}  Let  \cref{assum} hold and $f\in H$. Then there  exist  random variables $T_\omega$ and $ L_4(\mathfrak B, \omega )$ such that   any two solutions $v_1$ and $v_2$ of system  \ceqref{2.2} corresponding to initial values  $v_{1,0},$ $v_{2,0}$ in $\mathfrak{B}(\theta_{-T_\omega}\omega)$, respectively,   satisfy
\[
 \big \|v_1(T_\omega,\theta_{-T_\omega}\omega,v_{1,0})-v_2(T_\omega,\theta_{-T_\omega}\omega,v_{2,0}) \big \|{^2_{H^2}}\leq  L_4(\mathfrak B, \omega)\|v_{1,0}-v_{2,0}\|^2_{H^1}, \quad \omega\in \Omega.
\]
\end{lemma}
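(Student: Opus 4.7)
I plan to mirror the strategy of $\cref{lem:H2bd}$, now applied to the difference equation \eqref{5.1} in place of the equation for $w=v-u$. Multiplying \eqref{5.1} by $A^2\bar v$ and integrating over $\mathbb T^2$, integration by parts gives
\begin{align*}
\tfrac{1}{2}\tfrac{d}{dt}\|A\bar v\|^2 + \nu\|A^{3/2}\bar v\|^2
&= -\bigl(B(\bar v,v_1+hz(\theta_t\omega)),A^2\bar v\bigr) \\
&\quad -\bigl(B(v_2+hz(\theta_t\omega),\bar v),A^2\bar v\bigr).
\end{align*}
Each trilinear term on the right will be handled with H\"older's and the Gagliardo-Nirenberg inequalities, following the same pattern as the estimates for $I_5(t)$ and $I_6(t)$ in the proof of $\cref{lem:H2bd}$, with $v_1$ and $v_2$ now playing roles analogous to $v$ and $u$ there. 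Young's inequality then absorbs parts of $\|A^{3/2}\bar v\|^2$ into the dissipation and should yield an inequality of the form
\begin{align*}
\tfrac{d}{dt}\|A\bar v\|^2 + \nu\|A^{3/2}\bar v\|^2 \leq P(t)\|A\bar v\|^2 + Q(t)\|A^{1/2}\bar v\|^2,
\end{align*}
where $P$ and $Q$ are polynomial combinations of $\|A^{1/2}v_i(t)\|$, $\|Av_i(t)\|$ and $|z(\theta_t\omega)|$.

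The crux is the uniform control of $P$ and $Q$ on a short window preceding $T_\omega$. I would set $T_\omega:=T_{\mathfrak B}(\omega)+1$; writing any $t\in[T_\omega-1,T_\omega]$ as $t=T_\omega-\varepsilon$ with $\varepsilon\in[0,1]$, the bound recorded in \eqref{mar9.4} (a direct consequence of $\cref{lemma4.4}$) gives $\|Av_i(t,\theta_{-T_\omega}\omega,v_{i,0})\|^2 \leq \rho(\omega)+\|\mathcal A_0\|_{H^2}^2$ for $i=1,2$, and Poincar\'e's inequality then also controls $\|A^{1/2}v_i\|^2$. Combined with the standard tempered bound on $|z(\theta_t\omega)|$, this yields a tempered random upper bound $K(\omega)$ for both $P(t)$ and $Q(t)$ on $[T_\omega-1,T_\omega]$.

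A standard uniform-Gronwall-type application on $[s,T_\omega]$ with $s\in(T_\omega-1,T_\omega-\tfrac12)$, followed by integration in $s$, then produces
\begin{align*}
\|A\bar v(T_\omega)\|^2 \leq C\,e^{K(\omega)}\int_{T_\omega-1}^{T_\omega}\Bigl(\|A\bar v(s)\|^2+\|A^{1/2}\bar v(s)\|^2\Bigr)ds.
\end{align*}
The $\|A^{1/2}\bar v(s)\|^2$-integrand is dominated uniformly in $s\in[0,T_\omega]$ by $C(\omega)\|\bar v(0)\|^2\leq C(\omega)\|v_{1,0}-v_{2,0}\|^2_{H^1}$ via \eqref{mar19.1}, and the $\|A\bar v(s)\|^2$-integrand is then handled by integrating the $H^1$-energy inequality \eqref{mar9.2} over $[T_\omega-1,T_\omega]$ and inserting the same bound on $\|A^{1/2}\bar v\|^2$. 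Aggregating all prefactors into a single tempered random variable defines $L_4(\mathfrak B,\omega)$ and yields the claimed estimate. The main technical obstacle is the careful Gagliardo-Nirenberg bookkeeping in the first step; structurally it is identical to the calculation already performed in $\cref{lem:H2bd}$, so no conceptually new ingredient should be required.
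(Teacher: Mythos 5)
Your proposal is correct and follows essentially the same route as the paper: testing \eqref{5.1} with $A^2\bar v$, arriving at a differential inequality of the form $\frac{\d}{\d t}\|A\bar v\|^2\leq P(t)\|A\bar v\|^2+Q(t)\|A^{\frac12}\bar v\|^2$, choosing $T_\omega=T_{\mathfrak B}(\omega)+1$ so that \eqref{mar9.4} controls $\|Av_i\|$ on the final unit window, and closing via uniform Gronwall together with the $H^1$-difference estimates \eqref{mar9.2}/\eqref{mar19.1}. The only detail worth spelling out is that invoking \eqref{mar19.1} requires a tempered bound on the full-interval quantity $\int_0^{T_\omega}\|A^{\frac12}v_i(\tau,\theta_{-T_\omega}\omega,v_{i,0})\|^4\,\d\tau$, not merely on the last unit window; this follows from the $H^1$-boundedness and temperedness of $\mathfrak B$ via \eqref{4.27} and Gronwall, exactly as in the paper's construction of $\zeta_9$.
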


\begin{proof}
Multiplying the  equation   \ceqref{5.1} by $A^2\bar{v}$ and integrating over $\mathbb T^2$, by integration by parts we obtain
\begin{align}\label{5.5}
\frac{1}{2}\frac{\d}{\d t}\|A\bar{v}\|^2+ \nu  \|A^\frac32\bar{v}\|^2&=- \big (B(\bar{v},v_1+hz(\theta_t\omega)),\, A^2\bar{v}  \big )
- \big (B(v_2+hz(\theta_t\omega),\bar{v}),\, A^2\bar{v} \big ).
\end{align}
Applying the H\"{o}lder inequality, the Gagliardo-Nirenberg inequality and the Young inequality, we deduce that
\begin{align}\label{5.6}
 \big| \big(B(\bar{v},v_1+hz(\theta_t\omega)),\, A^2\bar{v}\big)
 \big| &\leq \|A^\frac12\bar{v}\|_{L^\infty}\|\nabla(v_1+hz(\theta_t\omega))\|\|A^\frac32\bar{v}\|  \nonumber\\
&\quad
 +\|\bar{v}\|_{L^\infty}\|A(v_1+hz(\theta_t\omega))\|\|A^\frac32\bar{v}\|
 \nonumber\\
&\leq C\|A\bar{v}\|^\frac12\|A^\frac32\bar{v}\|^\frac32
\left (\|A^\frac12v_1\|+\|A^\frac12h\||z(\theta_t\omega)| \right )\nonumber\\
&\quad +C\|\bar{v}\|^\frac12\|A\bar{v}\|^\frac12\|A^\frac32\bar{v}\| \big(\|Av_1\|+\|Ah\||z(\theta_t\omega)| \big)\nonumber\\
&\leq \frac  \nu 2\|A^\frac32\bar{v}\|^2+C\|A\bar{v}\|^2 \left (\|A^\frac12v_1\|^4+|z(\theta_t\omega)|^4 \right)\nonumber\\
&\quad +C\|A^\frac12\bar{v}\|^2 \left (\|Av_1\|^4+|z(\theta_t\omega)|^4 \right),
\end{align}
and that
\begin{align}\label{5.7}
\big| \big(B(v_2+hz(\theta_t\omega),\bar{v}),\, A^2\bar{v}\big)\big|
&\leq \|A^\frac12(v_2+hz(\theta_t\omega))\|\|\nabla\bar{v}\|_{L^\infty}\|A^\frac32\bar{v}\| \nonumber\\
&\quad
 +\|v_2+hz(\theta_t\omega)\|_{L^4}\|A\bar{v}\|_{L^4}\|A^\frac32\bar{v}\|\nonumber\\
&\leq C\|A\bar{v}\|^\frac12\|A^\frac32\bar{v}\|^\frac32
\left (\|A^\frac12v_2\|+\|A^\frac12h\||z(\theta_t\omega)| \right)\nonumber\\
&\quad +C \|A^\frac12(v_2+hz(\theta_t\omega)
)\|\|A\bar{v}\|^\frac12\|A^\frac32\bar{v}\|^\frac32 \nonumber\\
&\leq \frac  \nu 2\|A^\frac32\bar{v}\|^2+C\|A\bar{v}\|^2 \left (\|A^\frac12v_2\|^4+|z(\theta_t\omega)|^4 \right).
\end{align}

Substituting  \ceqref{5.6} and  \ceqref{5.7} into  \ceqref{5.5} yields
\begin{align}
\frac{\d}{\d t}\|A\bar{v}\|^2
&\leq C\|A\bar{v}\|^2 \left ( \sum_{j=1}^2 \|A^\frac12v_j\|^4 +|z(\theta_t\omega)|^4 \right )   +C\|A^\frac12\bar{v}\|^2 \left (\|Av_1\|^4+|z(\theta_t\omega)|^4 \right). \nonumber
\end{align}
For $s\in(t-1,t)$ and $t\geq 1$, applying Gronwall's lemma  we deduce that
\begin{align}
&\|A\bar{v}(t)\|^2
-e^{\int_s^tC( \sum_{i=1}^2 \|A^\frac12v_i\|^4 +|z(\theta_\tau\omega)|^4)\, \d \tau}\|A\bar{v}(s)\|^2\nonumber\\
&\quad \leq \int_s^tCe^{\int_\eta^tC( \sum_{i=1}^2 \|A^\frac12v_i\|^4+|z(\theta_\tau\omega)|^4)\, \d \tau}
\|A^\frac12\bar{v}\|^2 \left(\|Av_1\|^4+|z(\theta_\eta\omega)|^4\right) \d \eta\nonumber\\
&\quad \leq Ce^{\int_{t-1}^tC( \sum_{i=1}^2 \|A^\frac12v_i\|^4+|z(\theta_\tau\omega)|^4)\, \d \tau}
\int_{t-1}^t\|A^\frac12\bar{v}\|^2\left(\|Av_1\|^4+|z(\theta_\eta\omega)|^4 \right) \d \eta. \nonumber
\end{align}
Integrating w.r.t. $s$ on $(t-1,t)$ yields
\begin{align}\label{5.9}
&\|A\bar{v}(t)\|^2
-\int_{t-1}^te^{\int_s^tC( \sum_{i=1}^2 \|A^\frac12v_i\|^4+|z(\theta_\tau\omega)|^4)\, \d \tau}\|A\bar{v}(s)\|^2\, \d s\nonumber\\
&\quad \leq Ce^{\int_{t-1}^tC( \sum_{i=1}^2 \|A^\frac12v_i\|^4+|z(\theta_\tau\omega)|^4)\, \d \tau}
\int_{t-1}^t\|A^\frac12\bar{v}\|^2\left (\|Av_1\|^4+|z(\theta_\eta\omega)|^4\right) \d \eta.
\end{align}

Note that since our initial values $v_{1,0}$ and $v_{2,0}$   belong to the $H^1$ random  absorbing set  $\mathfrak B$,  applying Gronwall's lemma to  \ceqref{mar9.2} it follows   that
\begin{align}
 & \|A^\frac12\bar{v}(t)\|^2 + \nu \int_0^te^{\int_s^tC( \sum_{i=1}^2 \|A^\frac12v_i\|^4+|z(\theta_\tau\omega)|^4)\, \d \tau}\|A\bar{v}(s) \|^2\, \d s
\nonumber\\
&\quad \leq e^{\int_0^tC( \sum_{i=1}^2 \|A^\frac12v_i\|^4+|z(\theta_\tau\omega)|^4)\, \d \tau}\|A^\frac12\bar{v}(0)\|^2,\quad t\geq 1. \nonumber
\end{align}
As a consequence, we obtain the   inequalities
\begin{align}
& \nu \int_{t-1}^te^{\int_s^tC( \sum_{i=1}^2 \|A^\frac12v_i\|^4+|z(\theta_\tau\omega)|^4)\, \d \tau}\|A\bar{v}(s)\|^2\, \d s\nonumber\\
&\quad \leq e^{\int_{0}^tC( \sum_{i=1}^2 \|A^\frac12v_i\|^4+|z(\theta_\tau \omega)|^4)\, \d \tau}\|A^\frac12\bar{v}(0)\|^2, \nonumber
\end{align}
and
\begin{align}
&
\int_{t-1}^t\|A^\frac12\bar{v}\|^2 \left (\|Av_1\|^4+|z(\theta_\eta\omega)|^4\right) \d \eta\nonumber\\
&\quad \leq e^{\int_0^tC( \sum_{i=1}^2 \|A^\frac12v_i\|^4+|z(\theta_\tau\omega)|^4)\, \d \tau}\|A^\frac12\bar{v}(0)\|^2
\int_{t-1}^t \!  \left (\|Av_1\|^4+|z(\theta_\eta\omega)|^4\right) \d \eta.\nonumber
\end{align}
Substituting these inequalities  into  \ceqref{5.9}, we have
\begin{align}
 \|A\bar{v}(t)\|^2
&\leq Ce^{\int_0^tC( \sum_{i=1}^2 \|A^\frac12v_i\|^4+|z(\theta_\tau\omega)|^4)\, \d \tau}\|A^\frac12\bar{v}(0)\|^2
\int_{t-1}^t  \! \left (1+\|Av_1\|^4+|z(\theta_\eta\omega)|^4\right)  \d \eta   \nonumber
\end{align}
for $  t\geq 1$.
Replacing $\omega$ by $\theta_{-t}\omega$, we obtain   for $  t\geq 1$ that
\begin{align} \label{mar11.1}
 \|A\bar{v}(t,\theta_{-t}\omega,\bar{v}(0))\|^2
&\leq Ce^{\int_0^tC( \sum_{i=1}^2\|A^\frac12v_i(\tau,\theta_{-t}\omega,v_i(0))\|^4
+|z(\theta_{\tau-t}\omega)|^4)\, \d \tau}\|A^\frac12\bar{v}(0)\|^2\nonumber\\
&\quad  \times \int_{t-1}^t  \! \left ( \|Av_1(\eta,\theta_{-t}\omega,v_{1,0})\|^4+|z(\theta_{\eta-t}\omega)|^4+1\right) \d \eta.
\end{align}

Recall from  \ceqref{mar9.4} that
  \[
\sup_{s\in [t-1,t]}
 \|Av(s,\theta_{-t}\omega,  v(0)) \|^2
  \leq     \rho(\omega) +\|\mathcal A_0\|_{H^2}^2,
 \quad  t\geq T_{\mathfrak B} (\omega)+1.
\]
Hence,  taking
\[
T_\omega :=T_{\mathfrak B} (\omega)+1, \quad \omega\in \Omega,
\]
we have
\begin{align}
 & \int_{T_\omega -1}^{T_\omega }  \left ( \|Av_1(\eta,\theta_{-T_\omega }\omega,v_{1,0})\|^4+|z(\theta_{\eta-T_\omega }\omega)|^4\right)  \d \eta  \nonumber\\
 &\quad
 \leq  \left( \rho(\omega) +\|\mathcal A_0\|_{H^2}^2 \right)^2 + \int_{-1}^0 |z(\theta_{\eta}\omega)|^4 \, \d \eta =: \zeta_8(\omega) . \label{mar11.2}
 \end{align}

On the other hand, recall  from  \ceqref{4.27} that
$$
\frac{\d}{\d t}\|A^{\frac12}v\|^2 \leq C \left ( 1+ |z(\theta_t\omega)|^2 \right )\|A^{\frac12}v\|^2 + C \left (1 +|z(\theta_t\omega)|^6 \right) .
$$
Gronwall's lemma gives
\ben
 \|A^{\frac 12} v(t,\omega, v(0))\|^2
  & \leq  e^{ \int^t_0 C(1+|z(\theta_\tau\omega)|^2) \, \d \tau} \|A^{\frac 12} v(0)\|^2  \\
 &\quad
 +\int_0^t Ce^{ \int^t_s C(1+|z(\theta_\tau\omega)|^2) \, \d \tau} \left (1+ |z(\theta_s\omega)|^6 \right)  \d s \\
 &\leq  e^{ \int^t_0 C(1+|z(\theta_\tau\omega)|^2) \, \d \tau} \left[ \|A^{\frac 12} v(0)\|^2
 +\int_0^t  C \left(1+ |z(\theta_s\omega)|^6 \right)  \d s \right]  .
\ee
Since
\[
\int_0^t  C \left (1+ |z(\theta_s\omega)|^6 \right)  \d s \leq e^{ \int^t_0 C(1+|z(\theta_\tau\omega)|^6) \, \d \tau},
\]
the estimate is simplified to
$$
 \|A^{\frac 12} v(t,\omega, v(0))\|^2
   \leq  \left( \|A^{\frac 12} v(0)\|^2 +1 \right) e^{ \int^t_0 C(1+|z(\theta_\tau\omega)|^6) \, \d \tau} ,
   \quad t>0.
$$
Therefore, uniformly for   $v(0) \in \mathfrak B(\theta_{-t}\omega)$ we have
 \begin{align}
 \int_0^t
 \|A^{\frac 12} v(\eta, \theta_{-t} \omega, v(0))\|^2 \, \d \eta
 &   \leq  \left( \| \mathfrak B  (\theta_{-t}\omega)\|^2_{H^1} +1 \right) \int_0^t e^{ \int^\eta_0 C(1+|z(\theta_{\tau-t} \omega)|^6) \, \d \tau}  \, \d \eta \nonumber \\
 &   \leq  \big( \zeta_2(\theta_{-t}\omega)  +C \big)  e^{ Ct+C \int^0_{-t}  |z(\theta_{\tau} \omega)|^6 \, \d \tau}   ,
   \quad t>0  , \nonumber
\end{align}
and, particularly for $t=T_\omega$,
 \begin{align}  \label{mar11.3}
 & \int_0^{T_\omega}
 \|A^{\frac 12} v(\eta, \theta_{-{T_\omega}} \omega, v(0))\|^2 \, \d \eta \nonumber \\
 &   \quad  \leq  \big( \zeta_2(\theta_{-{T_\omega}}\omega)  +C \big)  e^{ C {T_\omega}+C \int^0_{-{T_\omega}}  |z(\theta_{\tau} \omega)|^6 \, \d \tau} =:\zeta_9(\omega).
\end{align}

Summarizing, substituting      \ceqref{mar11.2}  and  \ceqref{mar11.3}  into  \ceqref{mar11.1}  we obtain at $t=T_\omega$ that
\begin{align}
 \|A\bar{v}(T_\omega ,\theta_{-T_\omega }\omega,\bar{v}(0))\|^2
&\leq Ce^{ C \zeta_9(\omega) } \zeta_8(\omega) \|A^\frac12\bar{v}(0)\|^2. \nonumber
\end{align}
Therefore, the lemma is proved with
\[
 L_4 (\mathfrak B, \omega) :=   Ce^{ C \zeta_9(\omega) } \zeta_8(\omega) ,\quad \omega\in \Omega. \qedhere
\]
\end{proof}

Now we are ready to conclude the following main result of this section.
\begin{theorem}[$(H, H^2)$-smoothing] \label{theorem5.6}
 Let  \cref{assum} hold and $f\in H$. Then for   any tempered set $\mathfrak D \in \D_H$ there
  exist  random variables $T_{{\mathfrak D}} (\cdot)  $   and $ L_{\mathfrak D}(\cdot )$ such that  any two solutions $v_1$ and $v_2$ of system  \ceqref{2.2} corresponding to initial values   $v_{1,0},$ $ v_{2,0}$ in $\mathfrak D \left (\theta_{-T_{\mathfrak D}(\omega)}\omega \right)$, respectively, satisfy
  \begin{align}
  &
  \big\|v_1 \!  \big (T_{\mathfrak D}(\omega),\theta_{-T_{\mathfrak D}(\omega)}\omega,v_{1,0}\big)
  -v_2 \big (T_{\mathfrak D}(\omega),\theta_{-T_{\mathfrak D}(\omega)}\omega,v_{2,0} \big) \big \|{^2_{H^2}}  \notag \\[0.8ex]
&\quad \leq  L_{\mathfrak D} (\omega)\|v_{1,0}-v_{2,0}\|^2,\quad \omega\in \Omega.\label{sep5.1}
\end{align}
\end{theorem}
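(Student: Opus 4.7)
The strategy is to obtain the $(H,H^2)$-smoothing as a two-stage composition: first use \cref{lem:H1} to carry initial data from the tempered set $\mathfrak D$ into the $H^1$-absorbing set $\mathfrak B$ while controlling the difference in the $H^1$-norm by the $H$-norm, and then apply \cref{lemma5.1} on $\mathfrak B$ to upgrade this from $(H,H^1)$-smoothing to $(H^1,H^2)$-smoothing. The cocycle property of $\phi$ will glue the two estimates together.

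More precisely, let $T_\omega$ and $L_4(\mathfrak B,\cdot)$ be the random variables provided by \cref{lemma5.1}, and let $T_1(\mathfrak D,\cdot)$ and $L_3(\mathfrak D,\cdot)$ be those supplied by \cref{lem:H1}. The plan is to set
\[
T_{\mathfrak D}(\omega):=T_\omega + T_1 \!\left( \mathfrak D,\theta_{-T_\omega}\omega \right),
\qquad
L_{\mathfrak D}(\omega):=L_4(\mathfrak B,\omega)\,L_3 \!\left( \mathfrak D,\theta_{-T_\omega}\omega \right).
\]
Given $v_{1,0},v_{2,0}\in \mathfrak D(\theta_{-T_{\mathfrak D}(\omega)}\omega)$, write $T_1':=T_1(\mathfrak D,\theta_{-T_\omega}\omega)$ and decompose the evolution by the cocycle identity
\[
v_i \!\left(T_{\mathfrak D}(\omega),\theta_{-T_{\mathfrak D}(\omega)}\omega,v_{i,0}\right)
=v_i \!\left(T_\omega,\theta_{-T_\omega}\omega,\, y_i\right),\qquad
y_i:=v_i \!\left(T_1',\theta_{-T_1'}(\theta_{-T_\omega}\omega),v_{i,0}\right).
\]
By \cref{lem:H1} applied at the sample $\theta_{-T_\omega}\omega$ in place of $\omega$, both $y_1$ and $y_2$ belong to $\mathfrak B(\theta_{-T_\omega}\omega)$ and
\[
\|y_1-y_2\|_{H^1}^2 \leq L_3 \!\left(\mathfrak D,\theta_{-T_\omega}\omega\right)\,\|v_{1,0}-v_{2,0}\|^2.
\]
Since $y_1,y_2\in \mathfrak B(\theta_{-T_\omega}\omega)$, \cref{lemma5.1} then yields
\[
\left\|v_1 \!\left(T_\omega,\theta_{-T_\omega}\omega,y_1\right)-v_2 \!\left(T_\omega,\theta_{-T_\omega}\omega,y_2\right)\right\|_{H^2}^2
\leq L_4(\mathfrak B,\omega)\,\|y_1-y_2\|_{H^1}^2.
\]
Chaining the two inequalities gives exactly \ceqref{sep5.1} with the asserted random variables $T_{\mathfrak D}$ and $L_{\mathfrak D}$.

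There is no genuine obstacle remaining at this stage; the analytic difficulty has been absorbed into \cref{lem:H1} and \cref{lemma5.1}. The only subtle point to be careful about is the bookkeeping of the shifts $\theta_{-T_\omega}\omega$ versus $\omega$, so that \cref{lemma5.1} is indeed invoked at the correct sample and $y_1,y_2$ lie in the correct fibre $\mathfrak B(\theta_{-T_\omega}\omega)$ of the absorbing set; this is ensured precisely by the absorption statement at the end of \cref{lem:H1}. Measurability of $T_{\mathfrak D}$ and $L_{\mathfrak D}$ follows from that of $T_\omega,T_1,L_3,L_4$ together with the measurability of the flow $\theta$.
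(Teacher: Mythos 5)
Your proposal is correct and coincides with the paper's own argument: the paper likewise defines $T_{\mathfrak D}(\omega)=T_\omega+T_1(\mathfrak D,\theta_{-T_\omega}\omega)$ and $L_{\mathfrak D}(\omega)=L_4(\mathfrak B,\omega)\,L_3(\mathfrak D,\theta_{-T_\omega}\omega)$, uses the cocycle identity to split the evolution into an $(H,H^1)$-smoothing stage via \cref{lem:H1} (landing in $\mathfrak B(\theta_{-T_\omega}\omega)$) followed by the $(H^1,H^2)$-smoothing of \cref{lemma5.1}, and chains the two estimates. The bookkeeping of the fibres $\theta_{-T_\omega}\omega$ versus $\omega$ is handled exactly as in the paper.
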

\begin{proof}
By  \cref{lem:H1}, for the tempered set   $\mathfrak D\in \D_H$ there exist random variables $ T_1(\mathfrak D ,\cdot )  $ and $L_3(\mathfrak D,\cdot) $ such that
\be   \label{mar19.6}
  \left \| \bar v\left (T_1 ,\theta_{-T_1}\omega,  \bar v(0)\right)
  \right \|^2_{H^1}
 \leq L_3({\mathfrak D}, \omega) \|\bar {v} (0)\|^2  ,
\ee
where $T_1= T_1(\mathfrak D, \omega)$. In addition, $ v\left (T_1 ,\theta_{-T_1}\omega,  v_{0}\right) \in \mathfrak B(\omega)$ for all $v_0\in \mathfrak D(\theta_{-T_1}\omega) $, $\omega\in \Omega$.
 Therefore, by  \cref{lemma5.1} and  \ceqref{mar19.6},
\ben
  \left \| \bar v \big( T_\omega +T_1, \theta_{-T_\omega-T_1}
 \omega, \bar v(0) \big)
   \right \|^2_{H^2}
 &  =  \left \| \bar v \big( T_\omega  , \theta_{-T_\omega }
 \omega, \bar v  ( T_1, \theta_{-T_\omega-T_1}
 \omega, \bar v(0)  )\big)
   \right \|^2_{H^2}  \\
   &\leq  L_4(\mathfrak B, \omega) \left \|  \bar v \big  ( T_1, \theta_{-T_\omega-T_1}
 \omega, \bar v(0)  \big)
   \right \|^2_{H^1}  \  \text{(by  \cref{lemma5.1})}\\
   &\leq L_4(\mathfrak B,\omega)  L_3 \big (\mathfrak D, \theta_{-T_\omega}\omega \big) \|\bar v(0)\|^2  \  \text{(by  \ceqref{mar19.6})} ,
\ee
where $T_1=T_1(\theta_{-T_\omega}  \omega)$,
uniformly for $v_{1,0},$ $ v_{2,0}\in \B(\theta_{-T_\omega-T_1(\theta_{-T_\omega} \omega)}\omega)$, $  \omega\in \Omega$. Hence,
with the random variables
\ben
  & T_{\mathfrak D}( \omega) := T_\omega +T_1(\theta_{-T_\omega} \omega) , \\
  &L_{\mathfrak D}(\omega) :=  L_4(\mathfrak B,\omega)   L_3 \big (\mathfrak D, \theta_{-T_\omega}\omega \big),
\ee
being defined we have the theorem.
  \end{proof}

\section{The $(H,H^2)$-random attractor}\label{sec6}
With the above preparations being made we are now able to  show that the  random attractor  $\mathcal A$ of the random NS equation   \ceqref{2.2} is in fact a finite-dimensional  $(H,H^2)$-random attractor. More precisely,  \cref{theorem4.6} is now strengthened to the following.

\begin{theorem} \label{theorem5.1}
 Let  \cref{assum} hold and $f\in H$. Then the  RDS $\phi$ generated by the random NS equation   \ceqref{2.2} has  a tempered $(H,H^2)$-random attractor $\mathcal A $. In addition, $\mathcal A$ has finite fractal dimension in $H^2$: there exists   a constant $ d>0$ such that
\[
d_f^{H^2} \! \big ( \A(\omega) \big)  \leq d, \quad \omega \in \Omega.
\]
\end{theorem}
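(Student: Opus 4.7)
The plan is to combine \cref{lem:cui18} with the $(H,H^2)$-smoothing effect of \cref{theorem5.6} and the $H^2$-absorbing set of \cref{theorem4.1} to obtain the $(H,H^2)$-random attractor, and then invoke \cref{lem:cui} with $\delta=1$ to transfer finite fractal dimensionality from $H$ (\cref{theorem4.6}) to $H^2$. At this point the heavy analytic work has already been done; what remains is a clean synthesis through the two abstract criteria.

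First I would verify the hypotheses of \cref{lem:cui18}. Condition (i) is immediate: \cref{theorem4.1} provides a tempered random absorbing set $\mathfrak{B}_{H^2}$ bounded in $H^2$ (if closedness is required, one may pass to its $H^2$-closure without destroying the absorbing property). The nontrivial step is the $(H,H^2)$-asymptotic compactness of $\phi$, and for this I would use a standard time-splitting argument. Given $\mathfrak{D}\in\mathcal{D}_H$, $x_n\in\mathfrak{D}(\theta_{-t_n}\omega)$ and $t_n\to\infty$, let $T=T_{\mathfrak{B}}(\omega)$ be the time furnished by \cref{theorem5.6} applied with $\mathfrak{D}=\mathfrak{B}$, and write
\[
\phi(t_n,\theta_{-t_n}\omega,x_n)=\phi(T,\theta_{-T}\omega,y_n),\qquad y_n:=\phi(t_n-T,\theta_{-t_n}\omega,x_n).
\]
Since $\mathfrak{B}$ pullback-absorbs $\mathfrak{D}$, we have $y_n\in\mathfrak{B}(\theta_{-T}\omega)$ for all large $n$; and since $\mathcal{A}$ is the $H$-random attractor (\cref{theorem4.6}) with $\mathcal{A}(\theta_{-T}\omega)$ compact in $H$, $\{y_n\}$ admits a subsequence that is Cauchy in $H$. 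Feeding any two such terms into the smoothing inequality \eqref{sep5.1} (with $\mathfrak{D}=\mathfrak{B}$) yields
\[
\|\phi(t_n,\theta_{-t_n}\omega,x_n)-\phi(t_m,\theta_{-t_m}\omega,x_m)\|_{H^2}^2\leq L_{\mathfrak{B}}(\omega)\,\|y_n-y_m\|_H^2\longrightarrow 0,
\]
so the extracted subsequence is Cauchy, hence convergent, in $H^2$. \cref{lem:cui18} then delivers an $(H,H^2)$-random attractor, which by the uniqueness and the explicit omega-limit formula in that lemma must coincide with the $H$-attractor $\mathcal{A}$.

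The finite fractal dimension in $H^2$ is then a direct consequence of \cref{lem:cui}. Applying \cref{theorem5.6} with $\mathfrak{D}=\mathfrak{B}_{H^2}$, which lies in $\mathcal{D}_H$ via the continuous embedding $H^2\hookrightarrow H$, furnishes on the $H^2$-bounded absorbing set a H\"older-type estimate with exponent $\delta=1$. Combined with $d_f^H(\mathcal{A}(\omega))\leq d$ from \cref{theorem4.6}, \cref{lem:cui} gives
\[
d_f^{H^2}\!\bigl(\mathcal{A}(\omega)\bigr)\leq d_f^H\!\bigl(\mathcal{A}(\theta_{-T_\omega}\omega)\bigr)\leq d,\qquad \omega\in\Omega.
\]
No essential obstacle is expected here; indeed, the only conceptual point worth stressing is that the smoothing is genuinely Lipschitz rather than merely H\"older ($\delta=1$), so the fractal dimension is preserved and not merely bounded by a multiple of it.
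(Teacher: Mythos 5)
Your proposal is correct and follows essentially the same route as the paper: the paper's proof likewise combines the tempered closed $H^2$ absorbing set of \cref{theorem4.1} with \cref{lem:cui18}, noting that the smoothing estimate \eqref{sep5.1} yields the $(H,H^2)$-asymptotic compactness, and then applies \cref{lem:cui} (with $\delta=1$) together with \cref{theorem4.6} for the dimension bound. You merely spell out the standard splitting argument that the paper leaves implicit; your extraction of an $H$-Cauchy subsequence of $\{y_n\}$ is legitimate either via pullback attraction to the compact set $\mathcal{A}(\theta_{-T}\omega)$ or, more directly, via the compact embedding $H^1\hookrightarrow H$ applied to the $H^1$-bounded set $\mathfrak{B}(\theta_{-T}\omega)$.
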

\begin{proof}
 In  \cref{theorem4.1}  we have constructed an $H^2$ random absorbing set $\mathfrak B_{H^2}$, which is tempered and closed in $H^2$,  while the  $(H,H^2)$-smoothing property  \ceqref{sep5.1}   implies the $(H,H^2)$-asymptotic compactness of $\phi$. Therefore,  by  \cref{lem:cui18} we conclude that $\A$ is indeed an $(H,H^2)$-random attractor of $\phi$.
 Since the fractal dimension in $H$ of  $\A$ is finite  (\cref{theorem4.6}), the   finite-dimensionality  in $H^2$ follows from   the $(H,H^2)$-smoothing property  \ceqref{sep5.1}  again, in view of  \cref{lem:cui}.
\end{proof}

 \begin{remark}
 The $(H,H^2)$-smoothing property  \ceqref{sep5.1}  and the finite fractal dimension in $H^2$ of the global attractor    are new even for     deterministic NS equations.
 \end{remark}

\section*{Acknowledgements}

  H. Liu was supported by the National Natural Science Foundation of China (Nos. 12271293, 11901342) and the project of Youth Innovation Team of Universities of Shandong Province (No. 2023KJ204).  H. Cui was   supported  by NSFC  11801195 and Ministerio de Ciencia e  Innovaci\'on  (Spain) and FEDER (European Community) under grant PID2021-122991NB-C21. J. Xin was supported  by the Natural Science Foundation of Shandong Province (No. ZR2023MA002).

\end{document}